\newtheorem{thm}{Theorem}[section]
\newtheorem{cor}[thm]{Corollary}
\newtheorem{prop}[thm]{Proposition}
\newtheorem{lem}[thm]{Lemma}
\theoremstyle{definition}
\newtheorem{defn}[thm]{Definition}
\theoremstyle{remark}
\newtheorem{rem}[thm]{Remark}
\newcommand{\disp}{\displaystyle} 
\newcommand{\R}{\mathbb{R}}
\newcommand{\rnu}{\mathbb{R}^{N-1}}
\newcommand{\dr}{\partial}
\numberwithin{equation}{section}
\title{Serrin's overdetermined problems on epigraphs}
\author{Nicolas Beuvin$^{\dagger}$, Alberto Farina$^{\dagger}$}
\address{$^{\dagger}$  Universit\'e de Picardie Jules Verne, LAMFA, CNRS UMR 7352, 33, rue Saint-Leu 80039 Amiens, France}
\email{nicolas.beuvin@u-picardie.fr, alberto.farina@u-picardie.fr}
\begin{document}

\keywords{Overdetermined BVP for semilinear PDEs on unbounded domains, rigidity, monotonicity and symmetry}


\thanks{\it 2020 Mathematics Subject Classification: 35N25,  35Bxx}

\begin{abstract}
In this work we establish some rigidity results for Serrin's overdetermined problem 
$$
\left\{
	\begin{array}{cll}
		- \Delta u=f(u) & \text{in}& \Omega,\\
		u > 0& \text{in} & \Omega,\\
		u=0 & \text{on} & \dr\Omega,\\
		\dfrac{\partial u}{\partial \eta} = \mathfrak{c} = const. & \text{on} & \dr \Omega,
	\end{array}
	\right.
$$
when $\Omega \subset \R^N$ is an epigraph (not necessarily globally Lipschitz-continuous) and $u$ is a classical solution, possibly unbounded. \\
In broad terms, our main results prove that $\Omega$ must be an affine half-space and $u$ must be one-dimensional, provided the epigraph is bounded from below. These results hold when $f$ is of Allen-Cahn type and $ N \geq 2$ or, alternatively, when $f$ is locally Lipschitz-continuous (with no restriction on the sign of $f(0)$) and  $ N \leq 3$. 
Finally, when $f(0) <0$, we also prove a new monotonicity result, valid in any dimension $ N \geq 2$. 
\end{abstract}
\maketitle
\section{Introduction}

In his seminal paper \cite{ser}, published in 1971, J. Serrin proved that if $\Omega$ is a bounded and smooth domain for which there is a smooth solution to the overdetermined problem 

\begin{equation}\label{probleme}
	\left\{
	\begin{array}{cll}
		- \Delta u=f(u) & \text{in}& \Omega,\\
		u > 0& \text{in} & \Omega,\\
		u=0 & \text{on} & \dr\Omega,\\
		\dfrac{\partial u}{\partial \eta} = \mathfrak{c} = const. & \text{on} & \dr \Omega,
	\end{array}
	\right.
\end{equation} 

\noindent then $\Omega$ must be a ball and $u$ is radially symmetric about its center. \\
Here, $\eta$ denotes the outward unit normal at $\partial \Omega$ and $f$ is a function of class $C^{1}$.

\smallskip

In this paper we are concerned with Serrin's overdetermined problem \eqref{probleme} on epigraphs, i.e., on unbounded domains of the form 
\begin{equation}
	\Omega:=\lbrace x=(x', x_N) \in \R^N, x_N> g(x') \rbrace, \label{epigraph}
\end{equation}
where $g: \R^{N-1} \to \R$ is a (sufficiently) smooth function and $ N \geq 2$. 

\smallskip

Serrin’s overdetermined problems on epigraphs naturally arise when studying regularity results for certain free-boundary problems using a blow-up procedure. This observation led H. Berestycki, L. Caffarelli, and L. Nirenberg to study, for the first time, Serrin’s overdetermined problems on epigraphs in their article \cite{BCN3}. For functions $f$ of \textit{Allen-Cahn type}\footnote{ \, i.e., $f \in Lip_{loc}([0, + \infty))$ such that :
\begin{equation}\label{Allen-Cahn-type}
\left\{\begin{array}{rl}
&  \quad \exists \,  \mu > \delta_2 > \delta_1>0 \, :\\[0.3cm]
{\rm (I)} & \quad \disp f >0 \quad \text{on } (0,\mu), \qquad f\leq 0  \quad \text{on } [\mu, +\infty),\\[0.3cm]
{\rm (II)} & \quad \disp f(t) \ge \delta_1 t \quad \text{for } t \in (0, \delta_1), \qquad f \text{ is non-increasing on } (\delta_2, \mu).
\end{array}\right.
\end{equation}
The typical representative is given by the classical Allen-Cahn nonlinearity $ f(t) = t-t^3$.\\}, the authors of \cite{BCN3} employed the sliding method to prove the following rigidity result: 

\textit{if $g \in C^2$ is a globally 
Lipschitz-continuous function satisfying an additional condition at infinity,\footnote{\, The additional condition on $g$ required in \cite{BCN3} is :
$ \, \forall \, \tau \in \R^{N-1}, \quad \lim_{{\atop {}_{\atop {\hskip-.6cm {\vert x'\vert \to+\infty}}}}}(g (x' +\tau ) - g(x')) = 0.$}
and problem \eqref{probleme} admits a smooth and bounded solution, then $g$ must be constant $($i.e., $\Omega = \lbrace x \in \R^N \, : \, x_N > const. \rbrace$ is an upper half-space$)$ and $u$ takes the form $u=u(x_N)$.}

In 2010, A. Farina and E. Valdinoci \cite{FVarma} introduced a new geometric tool that allowed them to improve the above result  without any additional technical condition at infinity on $g$, but under the dimensional constraint $N \leq 3$. 
Specifically, they proved the following result : 

\textit{if $f$ is of Allen-Cahn type, $g \in C^3$ is a globally Lipschitz-continuous function and problem \eqref{probleme} admits a bounded $C^2$-solution, then $\Omega $ is an affine half-space and $u$ is one-dimensional.}

\noindent Moreover, in \cite{FVarma} it is proved that :

\textit{i) the same rigidity result is true  if $\Omega$ is a two-dimensional epigraph of class $C^3$, $f$ is any locally Lipschitz-continuous function and $u$ is a monotone solution $($i.e. $\frac{\partial u}{\partial x_{2}}>0$ on $\Omega)$, possibly unbounded, but with bounded gradient;}

\textit{ii) for any locally Lipschitz-continuous function $f$, Serrin's overdetermined problem \eqref{probleme} has no bounded solution on globally Lipschitz-continuous coercive epigraphs if either $N=2$ or $N = 3$ and $f \geq 0$. }

\medskip

Subsequently, the new geometric tool introduced in \cite{FVarma} has been used to study Serrin's overdetermined problem on general smooth domains of a Riemannian manifold (see \cite{FMV}).

\medskip

In \cite{ww}, under more specific conditions on the Allen-Cahn type nonlinear function $f$ (but still including the classical Allen-Cahn model), K. Wang and J. Wei considered Serrin's overdetermined problem \eqref{probleme} on an epigraph $\Omega$ and proved that $\Omega$ must be an affine half-space and $u$ is one-dimensional, in any of the following cases :

\textit{iii) on any smooth epigraph, if $N=2$;}

\textit{iv) on any smooth and globally Lipschitz-continuous epigraph, if $N\geq 3$;}

\textit{v) on any smooth epigraph, if $N \leq 8$ and $u$ is a monotone solution, i.e. $\frac{\partial u}{\partial x_N}>0$ on $\Omega$.}

\noindent In \cite{ww} it is also proved that 

\textit{vi) Serrin's overdetermined problem \eqref{probleme} has no solution on smooth coercive epigraphs if $ N \geq 2$}. 

\smallskip

\noindent The result in item \textit{v)} above is optimal since M. Del Pino, F. Pacard and J. Wei have built a smooth epigraph $\Omega$ (which is not an affine half-space) such that, if $N\geq 9$, the overdetermined problem \eqref{probleme} admits a smooth, bounded and monotone solution (see Theorem 1 in \cite{ppw}). Also note that, the epigraph built in \cite{ppw} is not globally Lipschitz-continuous. 

\noindent The proofs in \cite{ww} involve tools from Geometric Measure Theory and the theory of free boundary problems. 

\medskip

Finally we recall that A. Ros, D. Ruiz and P. Sicbaldi, in their work \cite{RRS17}, proved the following result :

\textit{vii) if $f$ is (locally) Lipschitz-continuous, $\Omega \subset \R^2$ is a $C^{1,\alpha}$ domain whose boundary is unbounded and connected and problem \eqref{probleme} admits a bounded $C^2$-solution with $  \mathfrak{c} <0$, then $\Omega $ is an affine half-plane and $u$ is one-dimensional.} 

In particular, the result above applies when $\Omega $ is a two-dimensional epigraph defined by a function 
$ g \in C^{1,\alpha}_{loc}(\R)$, for some $ \alpha \in (0,1).$

\bigskip 

In view of the above motivation and the previous results, the following problem naturally emerges : 

\smallskip 

\textbf{Problem (P)} \textit{Provide general, natural conditions on the locally Lipschitz-continuous function $f$ and on the $C^1$ function $g$ such that, if the overdetermined problem \eqref{probleme} admits a classical solution,} \text{possibly unbounded},  \textit{then $\Omega$ must be an affine half-space and $u$ is one-dimensional.}

\smallskip 

The necessity to consider also unbounded solutions to the Serrin's overdetermined problem \eqref{probleme} comes from the fact that, for some natural functions $f$, the only solutions to \eqref{probleme} are unbounded. This is well illustrated by the case of harmonic functions, that is when $f \equiv 0$. In this case, there might be no bounded solution to \eqref{probleme}, whereas the function $u(x) =x_N$ is a positive unbounded harmonic function on the half-space $\R^N_+$ with zero Dirichlet boundary condition and Neumann boundary condition equals to $-1$ (see also Proposition  \ref {prop_princ-maxBCN} and Remark \ref{rem-prop_princ-maxBCN}). 

\smallskip 

Before proceeding further, we explicitely note that the results of \cite{BCN3},\cite{FVarma},\cite{RRS17} and \cite{ww} mentioned above, provide a partial answer to the aforementioned problem.

\smallskip 

The results of this article also fall within the research framework outlined by Problem (P). 
They extend and complement the known results mentioned above. In broad terms, our main results prove that $\Omega$ must be an affine half-space and $u$ must be one-dimensional, provided the epigraph is bounded from below. Our results also cover the case of unbounded solutions. When $f(0)\geq0$, they are based on the \textit{monotonicity} results of the recent paper \cite{bfs}, where the authors and B. Sciunzi prove that, when $ \Omega$ is an epigraph bounded from below, any  solution to the Dirichlet problem   
\begin{equation}\label{NonLin-PoissonEq}
\begin{cases}
-\Delta u=f(u) & \text{ in } \Omega,\\
\quad u >0 & \text{ in } \Omega,\\
\quad u=0\,\, &\text{ on } \partial\Omega,
\end{cases}
\end{equation}
is \textit{monotone}, i.e., $u$ satisfies $\frac{\partial u}{\partial x_N}>0$ on $\Omega$. Then we combine the monotonicity of $u$ together with the geometric approach developed by the second author and E.Valdinoci in \cite{FVarma} to prove that the epigraph $\Omega$ is an affine half-space and $u$ is one-dimensional.



The situation when $f(0) <0$ is more involved, and indeed the only known results in this case are restricted to dimension 2 (see results \textit{i)} and \textit{vii)} previously discussed). Furthermore, in this case, we cannot use the monotonicity results recently obtained in \cite{bfs}, since those results relied heavily on the assumption $f(0)\geq0$, which ensured the validity of the strong maximum principle for non-negative solutions to a semilinear Poisson equation.   
In the present work, to address the case $f(0)<0$, we  first  prove  a  new monotonicity result for solutions to the overdetermined problem \eqref{probleme} on epigraphs bounded from below and with non-zero Neumann boundary condition.
Unlike the strategy employed in the recent article \cite{bfs}, it is this latter condition that plays a key role in the proof of this new monotonicity result. From this, and the geometric approach developed in \cite{FVarma}, we deduce that the epigraph $\Omega$ is an affine half-space and $u$ is one-dimensional under the dimension constraint $N \leq 3.$ 

Specifically, in Section \ref{Sect-sACt} we consider Serrin's overdetermined problem \eqref{probleme} for a family of specified Allen-Cahn-type nonlinearities in any dimension $ N \geq2$. In this framework, if $\Omega$ belongs to a large class of epigraphs bounded from below (larger than that of globally Lipschitz-continuous epigraphs) and the overdetermined problem \eqref{probleme} admits a solution, possibly unbounded, then we prove that $\Omega $ is a half-space and $u$ is one-dimensional. 
This is the content of Theorem \ref{th-allen-cahn 2}.  In passing, we also establish a general rigidity result for monotone solutions to \eqref{probleme} (see Theorem \ref{th-allen-cahn 1}). This result interpolates between the above mentioned results \textit{iii)-vi)}.  It is of independent interest and ancillary to Theorem \ref{th-allen-cahn 2}.

In Section \ref{Sect-f(0)non-neg} we consider a general locally Lipschitz-continuous function $f$ satisfying $f(0)\geq 0$ and solutions $u$, bounded or not, to the overdetermined problem \eqref{probleme}. In this framework we prove the rigidity of $\Omega$ (i.e., $\Omega$ is a half-space) and the one-dimensional symmetry of $u$ in dimension $N \leq 3$. See Theorems \ref{Teo2-lip}, \ref{Teo3-lip} and \ref{Teo1-C1}.


Section \ref{Sect-f(0) neg} deals with the case of locally Lipschitz-continuous functions $f$ with $f(0) <0$. 
As previously discussed, we first prove a new monotonicity result valid for any dimension $N \geq 2$ (see Theorem \ref{th_overdet-f(0)<0}) and then we use it to demonstrate the rigidity of $\Omega$ and the one-dimensional symmetry of $u$, 
when $N \leq 3$.  


\smallskip

Several results used to prove our main conclusions are established in Section \ref{Appendix1}.
Some of these general results are of independent interest and could be useful for further studies of the nonlinear Poisson equation on unbounded domains. Also, for the reader's convenience, the notations used in the paper are collected at the end of Section \ref{Appendix1}. 

\smallskip

We end this section by observing that, in the present work, we always consider domains $\Omega$ of class $C^1$ and solutions $u$ belonging to\footnote{We recall (see Subsection \ref{Notations}) that $C^1(\overline{U})$ is the set of functions in $C^1(U)$ all of whose derivatives of order $ \leq 1$ have continuous (not necessarily bounded) extensions to the closure of the open set $U$. In particular we are not assuming that those functions are bounded on $U$.} $C^1(\overline{\Omega}) \cap C^2(\Omega)$. These are the minimal assumptions on $\Omega$ and on $u$ to deal with the problem in a classical framework. We also note that, when $\mathfrak{c} \neq 0$ in \eqref{probleme}, these assumptions automatically ensure that $\Omega$ is of class $C^{2,\alpha}$ and $u \in C^{2,\alpha}_{loc}(\overline{\Omega})$, for any $ \alpha \in (0,1)$ (see Theorem 1 of \cite{Vogel} and observe that the proof of this result is local). These results will therefore be used in our work (perhaps without explicitly recalling them). 

\bigskip

The paper is organized as follows :

\smallskip

\begin{enumerate}

\small {

\item[1.]  \textit{Introduction} 

\smallskip

\item[2.]  \textit{Serrin's overdetermined problem with Allen-Cahn type nonlinearity} 

\smallskip

\item[3.] \textit{Serrin's overdetermined problem with general nonlinearity. The case $f(0) \geq 0$}

\smallskip

\item[4.] \textit{Serrin's overdetermined problem with general nonlinearity. The case $f(0) <0$}

\begin{enumerate}
              \item[•] \textit{A new monotonicity result in any dimension $N \geq 2$}
              
              \item[•] \textit{Rigidity and one-dimensional symmetry results 
              }
              
              \end{enumerate}

\smallskip

\item[5.] \textit{Some auxiliary results}

              \begin{enumerate}
              \item[•] \textit{Some uniform bounds}
              
              \item[•] \textit{Some uniform estimates}
              
              \item[•] \textit{Some energy estimates}
              
              \item[•] \textit{Appendix}
              
              \item[•] \textit{Notations}
              \end{enumerate}
 
}              
\end{enumerate}
 
\section{Serrin's overdetermined problem with Allen-Cahn type nonlinearity}\label{Sect-sACt}

\smallskip

In this section we consider Serrin's overdetermined problem \eqref{probleme} when $f$ is modeled on the classical Allen-Cahn nonlinearity. 
More precisely, we deal with a family of nonlinearities $f$, already considered in \cite{ww}, which we will call \textit{specified Allen-Cahn-type nonlinearities}. 

A function $f$ is a \textit{specified Allen-Cahn-type nonlinearity} if the function 
\begin{equation}\label{double-potentiel} 
	W(t) := \displaystyle\int_{0}^{1} f(s)ds -\displaystyle\int_{0}^{t} f(s)ds, \qquad t \geq 0, 
\end{equation}
belongs to $C^{2}([0,+\infty))$ and satisfies the following conditions :
\begin{itemize}

	\item[$\mathtt{W}1$)] $W \geq 0$, $W(1)=0 \,$ and $\, W>0 \, \text{ in } \, [0,1)$;

     \item[$\mathtt{W}2$)] $W'<0 $ in $(0,1)$, and either $W'(0)\neq 0$ or
	\begin{equation*}
		W'(0)=0 \quad \text{and} \quad W''(0)\neq 0;
	\end{equation*} 

	\item[$\mathtt{W}3$)] $\exists \, \delta_2 \in (0,1),  \exists \, \kappa>0$ such that $W'' \geq \kappa $ in $ [\delta_2,1]$;
	
	\item[$\mathtt{W}4$)] $\exists \, p>1, \exists \, c>0 $ such that $W'(t) \geq c(t-1)^{p}$ for $t>1$.
	
\end{itemize}

\noindent The representative example of this family is given by $f(t) = t-t^3$, the well-known \textit{Allen-Cahn nonlinearity}.\footnote{\, in this case we have $W(t) = \frac{(1-t^2)^2}{4}.$}  

For this family of nonlinear functions, the authors of \cite{ww} have proved results \textit{iii), iv), v) and vi)} previously discussed in the Introduction. Our first theorem interpolates between those results. It applies to monotone solutions, it is of independent interest and, when $N \leq 8$, it recovers Theorem 1.4 in \cite{ww}, that is, result \textit{v)} discussed in the Introduction (see also Remark \ref{rem-th-allen-cahn 1} below).


\begin{thm}\label{th-allen-cahn 1}
Assume $N \geq 2$ and let $f \in C^1([0, + \infty))$ be a specified Allen-Cahn type nonlinearity.
Let $u \in C^1(\overline{\Omega}) \cap C^2(\Omega)$ be a solution to \eqref{probleme} satisfying 
\begin{equation}\label{hyp-monot}
		\frac{\partial u}{\partial x_N}(x)>0 \qquad \forall x \in \Omega.
\end{equation}
Let $\Omega \subset \R^N$ be an epigraph defined by a function $g \in C^1(\R^{N-1})$ and, if $ N \geq 9$ let us also assume  that $g$ satisfies :
\begin{itemize}
	\item[a)] $\exists \, C>0$ for which 
	\begin{equation*}
		g(x') \geq - C(1 + \vert x' \vert) \qquad \forall \, x' \in \R^{N-1};
	\end{equation*} 
\end{itemize}	
or

\smallskip

\begin{itemize}
     \item[b)] $\exists \, C>0$ for which 
	\begin{equation*}
		g(x') \leq  C(1 + \vert x' \vert) \qquad \forall \, x' \in \R^{N-1}.
	\end{equation*} 
\end{itemize}

\medskip

\noindent Then, $\Omega=\R^{N}_{+}$ up to isometry and there exists $u_{0}:[0,+\infty) \to [0,+\infty)$ strictly increasing such that
	\begin{equation}
		u(x)=u_{0}(x_{N}) \qquad \forall x \in \R^{N}_{+}. 
	\end{equation}
\end{thm}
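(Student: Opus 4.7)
The plan is to perform a blow-down analysis, exploiting both the monotonicity hypothesis \eqref{hyp-monot} and the specific structure of $W$. As a preliminary step, I would establish that $0 < u \leq 1$ on $\Omega$: the upper bound follows from the coercivity condition $\mathtt{W}4$, the monotonicity of $u$ in $x_N$, and the maximum principle applied in slabs of the form $\Omega \cap \{x_N < M\}$. With this bound in hand, one uses the auxiliary energy estimates collected in Section \ref{Appendix1} to obtain the linear energy growth
\[
\int_{\Omega \cap B_R(x_0)} \left( \tfrac{1}{2}|\nabla u|^2 + W(u) \right) dx \; \leq \; C R^{N-1},
\]
uniformly in $x_0 \in \dr \Omega$ and $R \geq 1$, where both the monotonicity of $u$ and the overdetermined Neumann condition $\mathfrak{c} = const.$ are crucial to absorb boundary contributions coming from integration by parts.

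For sequences of base points $x_k \in \dr \Omega$ and scales $R_k \to +\infty$, I would next pass to the blow-down $\Omega_k := R_k^{-1}(\Omega - x_k)$ and $u_k(x) := u(R_k x + x_k)$. By the $C^{2,\alpha}$ regularity of $\dr \Omega$ inherited from the overdetermined condition (cf.\ the final remark of the Introduction), together with Arzel\`a--Ascoli and the energy bound above, one extracts a limit $(\Omega_\infty, u_\infty)$ with $\Omega_\infty$ a cone and $u_\infty$ a stable, monotone solution of the Allen--Cahn equation on $\Omega_\infty$ with $u_\infty = 0$ on $\dr \Omega_\infty$. The classification of stable minimal cones forces $\dr \Omega_\infty$ to be a hyperplane whenever $N \leq 8$. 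For $N \geq 9$ one invokes the one-sided linear bounds: hypothesis (a) passes to the blow-down as $\Omega_\infty \subseteq \{x_N > -C|x'|\}$, while (b) gives $\Omega_\infty \supseteq \{x_N > C|x'|\}$. In either case, the inclusion excludes the non-flat stable cones underlying the Del Pino--Pacard--Wei example, so $\dr \Omega_\infty$ is again a hyperplane. Once every blow-down limit is flat, a standard upper/lower-envelope argument for monotone solutions, combined with the uniqueness (up to translation) of the one-dimensional transition layer guaranteed by $\mathtt{W}1$--$\mathtt{W}3$, lifts the conclusion back to $\Omega$ and forces $\Omega$ to be an affine half-space with $u(x) = u_0(x_N)$.

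The main obstacle lies in the cone analysis for $N \geq 9$: one must verify that the one-sided linear growth on $g$ is truly inherited by every blow-down limit, and that this inclusion, together with the monotonicity of $u_\infty$, really rules out every non-flat stable Allen--Cahn cone. The subtle point is that (a) and (b) are strictly weaker than the global Lipschitz condition previously used in the literature, so no uniform Lipschitz bound on the blow-down cone is available; instead one has to leverage the fact that the growth of $g$ is controlled only from one side at infinity, and combine it with the sign of $\dr_{x_N} u_\infty$ to exclude two-sided divergence of $\dr \Omega_\infty$ in any direction.
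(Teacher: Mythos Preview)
Your outline has the right overall shape (blow-down, cone classification, one-sided linear bound for $N\ge 9$), but there is a genuine gap at the core of your step (4): the blow-down limit of $u$ is \emph{not} an Allen--Cahn solution on a cone. If $u_k(x)=u(R_k x+x_k)$ then $-\Delta u_k = R_k^{\,2} f(u_k)$, i.e.\ $u_k$ solves the singularly perturbed equation $-\varepsilon_k^2\Delta u_k=f(u_k)$ with $\varepsilon_k=R_k^{-1}\to 0$. By the Modica--Mortola picture (used in \cite{ww} in exactly this setting), the limit object is the characteristic function $\mathbf{1}_{\Omega_\infty}$ of a set of least perimeter, not a smooth function $u_\infty$ solving Allen--Cahn on $\Omega_\infty$. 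The relevant ``cone'' is therefore a \emph{minimal} cone in the geometric measure theory sense, obtained after a second blow-down of $\Omega_\infty$; it is not a domain supporting a stable Allen--Cahn solution. Your statements about ``$u_\infty$ a stable, monotone solution of the Allen--Cahn equation on $\Omega_\infty$'' and about an ``upper/lower-envelope argument for monotone solutions'' on the cone simply do not apply.

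Once the limit is correctly identified as a minimal cone $\mathcal{C}$, the $N\ge 9$ step needs a mechanism you do not mention. The paper exploits that $\Omega_\infty$ (and hence $\mathcal{C}$) inherits the subgraph property, so $\mathcal{C}+e_N\subseteq\mathcal{C}$; a lemma of Gonzalez--Massari--Miranda then forces $\mathcal{C}$ to be either a half-space or a vertical cylinder $\mathcal{C}'\times\R$ over a minimal cone $\mathcal{C}'\subset\R^{N-1}$. Condition (a) (resp.\ (b)) survives the blow-down and shows that the complement of $\mathcal{C}$ (resp.\ $\mathcal{C}$ itself) contains an open cone with axis $e_N$, which is impossible for a nontrivial cylinder $\mathcal{C}'\times\R$. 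This is what actually rules out all non-flat minimal cones, not merely ``the ones underlying the Del Pino--Pacard--Wei example''; your argument for $N\ge 9$ as written does not exclude, say, the Simons cone times $\R$. Finally, flatness of $\mathcal{C}$ is transferred back to $\Omega$ via the results of \cite{ww} (Theorem~1.6 there), not by an envelope/uniqueness argument for one-dimensional profiles.
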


\medskip

Some remarks are in order

\begin{rem}\label{rem-th-allen-cahn 1}

(i) In the above Theorem we do not assume that $u$ is bounded. Indeed, in Corollary \ref{cor_K-O-B-type} we prove that this property holds for any solution to the Dirichlet problem \eqref{NonLin-PoissonEq} on general domains, possibly unbounded, when $f$ is a specified Allen-Cahn type nonlinearity. 

\smallskip


(ii) We also notice that, when $g$ is globally Lipschitz-continuous, the monotonicity assumption \eqref{hyp-monot} is automatically satisfied in any dimension $N \geq 2$ thanks to item (i) above and to the paper \cite{BCN3}. Since the Lipschitz character of $g$ implies the validity of the additional condition a) (actually also b)), we see that the above Theorem also recovers Theorem 1.2 in \cite{ww} (that is, result \textit{iv)} discussed in the Introduction). 

When $g$ is coercive, the monotonicity assumption \eqref{hyp-monot} is automatically satisfied in any dimension $N \geq 2$ thanks to the work \cite{Esteban-Lions} and, since the additional condition a) is clearly in force, we see that also Theorem 1.3 in \cite{ww} (that is, result \textit{vi)} discussed in the Introduction) is a particular case of Theorem \ref{rem-th-allen-cahn 1} above. 

\smallskip

(iii) The additional assumptions for $ N \geq 9$ have the following geometrical interpretation:  either  $\R^N \setminus \Omega$ contains a cone or $\Omega$ contains a cone (or equivalenty, either the graph of $g$ lies above the graph of a cone or  the graph of $g$ lies below the graph of a cone). 

\smallskip

(iv) The above remarks also show that the epigraph constructed by M. Del Pino, F. Pacard and J. Wei in \cite{ppw} does not contain a cone (and neither does its complementary) and therefore that this epigraph is neither globally 
Lipschitz-continuous (as already observed in \cite{ppw}), nor contained in an affine half-space.  
\end{rem}

\smallskip

In view of the discussions in Remark \ref{rem-th-allen-cahn 1} above, the following two research lines seem legitimate and interesting, when $f$ is a \textit{specified Allen-Cahn-type nonlinearities} :

\medskip

1) for $ N \leq 8$, provide natural assumptions on $g$, weaker than the global Lipschitz continuity, which guarantee the monotonicity property;

\smallskip

2) for $ N \geq 9$, give natural assumptions on the epigraph $ \Omega$ ensuring that, if the overdetermined problem \eqref{probleme} admits a solution, then $\Omega $ is necessarily an affine half-space and $u$ is one-dimensional. 


\bigskip

The next result (see Theorem \ref{th-allen-cahn 2} below) takes a step in the directions indicated in items 1) and 2) above.  In general terms, this result proves the rigidity of the epigraph $\Omega$ and the one-dimensional symmetry of the solution $u$ when the epigraph is contained in a half-space and this holds true without any monotonicity assumption on $u$. 
To our knowledge, this is the first result of this type in any dimension $N \geq 3$. 
It stands on the results of the recent paper \cite{bfs}, where the authors and B. Sciunzi prove the monotonicity property \eqref{hyp-monot} for solutions to the Dirichlet problem \eqref{NonLin-PoissonEq} on epigraphs bounded from below, and defined by a function $g$ belonging to a large class $\mathcal{G}$ of continuous functions. Specifically, in  \cite{bfs} we introduced the following 
\begin{defn} \label{Def-classG} Assume $N \geq 2.$ We say that a continuous function $g : \R^{N-1} \mapsto \R$ belongs to the class $\mathcal{G}$, if it satisfies the next  compactness property : 

\medskip
\noindent 
\textit{Any sequence $(g_k)$ of translations of $g$, which is bounded at some fixed point of $\R^{N-1}$, admits a subsequence converging uniformly on every compact sets of $\R^{N-1}$ } 
\end{defn} 

\noindent and we showed that, in particular, the following continuous functions belong to $\mathcal{G}$ (see Section 5 of \cite{bfs} for more information, properties and examples about this class) : 

\smallskip

\begin{enumerate}
\item Uniformly continuous functions on $\R^{N-1}$ 
(not necessarily globally Lipschitz-continuous) belong to $\mathcal{G}$.
 
\smallskip

\item For $ N \geq 3$. 
Any function  $g \in C^2(\R^{N-1})$, bounded from below and such that $ \nabla^{2} g\in L^{\infty}(\R^{N-1})$ is a member of the class  $\mathcal{G}. $
\\ 
For instance, $ g= g(x_1,\ldots, x_{N-1}) = {(x_1)}^2 + \prod_{j=2}^{N-1} \sin(j x_j)$ belongs to $\mathcal{G}$. 

\smallskip

\item For $N=2$, any continuous function $g :\R \to \R$ such that ${\ell}_{-}:= \lim_{x \mapsto -\infty}g(x)\in (-\infty , + \infty]$ and $ {\ell}_{+}:= \lim_{x \mapsto +\infty} g(x) \in (-\infty , + \infty]$ belongs to $\mathcal{G}$. Therefore, any quasiconvex (resp. quasiconcave) continuous function bounded from below belongs to $\mathcal{G}$. In particular, \textit{any monotone  continuous function bounded from below} belongs to $\mathcal{G}$ as well as \textit{any convex functions bounded from below}. 

\smallskip

\item Further explicit examples of member of $\mathcal{G}$ are provided by the following functions (see Section 5 of \cite{bfs}) :  $g(x_1) = e^{x_1}$ if $ N=2$ and $g(x)=e^{x_1 + \sum_{j=2}^{N-1} \cos^j(x_j)}$ if $ N \geq 3$. Also, 
$ g(x_1,\ldots, x_{N}) = e^{e^{x_1}}$ and $g(x_1,\ldots, x_{N}) = x_1^4 + e^{x_2}$ qualify for any $ N \geq 2$. 
\end{enumerate}

\medskip


The examples above clearly show that the following result (which constitutes the main result of this section) extends the known results mentioned in the Introduction, when $f$ is a specified Allen-Cahn type nonlinearity (following the two research lines described above).

\smallskip

\begin{thm}\label{th-allen-cahn 2}
Assume $N \geq 2$ and let $\Omega $ be an epigraph bounded from below and defined by a function $g \in \mathcal{G} \cap C^1(\R^{N-1})$. \\ 
Let $f \in C^1([0, + \infty))$ be a specified Allen-Cahn type nonlinearity and let $u \in C^1(\overline{\Omega}) \cap C^2(\Omega)$ be a solution to \eqref{probleme}.

\noindent Then, $\Omega=\R^{N}_{+}$ up to a vertical translation and there exists $u_{0}:[0,+\infty) \to [0,+\infty)$ strictly increasing such that
	\begin{equation}
		u(x)=u_{0}(x_{N}) \qquad \forall x \in \R^{N}_{+}. 
	\end{equation}
\end{thm}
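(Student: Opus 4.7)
The strategy is to reduce Theorem~\ref{th-allen-cahn 2} to Theorem~\ref{th-allen-cahn 1} by establishing the monotonicity hypothesis \eqref{hyp-monot} via the results of~\cite{bfs}. Any solution $u$ of the overdetermined problem \eqref{probleme} is, in particular, a positive classical solution of the Dirichlet problem \eqref{NonLin-PoissonEq}. Moreover, for a specified Allen-Cahn type nonlinearity one has $f(0) \geq 0$: indeed $W'(t) = -f(t)$ and condition $\mathtt{W}2$ gives $W' < 0$ on $(0,1)$, hence $W' \leq 0$ on $[0,1]$ by continuity and therefore $f \geq 0$ on $[0,1]$. Since $g \in \mathcal{G}$ and $\Omega$ is bounded from below, the monotonicity theorem of~\cite{bfs} applies and yields $\frac{\partial u}{\partial x_N} > 0$ throughout $\Omega$.

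It then remains to verify the remaining hypotheses of Theorem~\ref{th-allen-cahn 1}. For $2 \leq N \leq 8$ nothing extra is required. For $N \geq 9$, the assumption that $\Omega$ is bounded from below provides some $m \in \R$ with $g(x') \geq m$ on $\R^{N-1}$; taking $C := 1 + |m|$ yields $g(x') \geq -C \geq -C(1+|x'|)$, which is precisely the cone bound in condition~(a). Applying Theorem~\ref{th-allen-cahn 1} therefore produces an isometry $T$ of $\R^N$ such that $T(\Omega) = \R^N_+$ and $u \circ T^{-1}(y) = u_0(y_N)$ for some strictly increasing $u_0 : [0,+\infty) \to [0,+\infty)$.

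The final step is to upgrade this isometry to a vertical translation, and here the epigraph structure is decisive. Since $T$ is an isometry mapping $\Omega$ onto a half-space, $\Omega$ itself is a half-space of $\R^N$. Being also an epigraph over $\R^{N-1}$ in the $x_N$-direction, $\Omega$ must necessarily have the form $\{x_N > c\}$ for some $c \in \R$; hence $g \equiv c$ and, after reabsorbing the shift into $u_0$, $u(x) = u_0(x_N)$ on $\Omega$, as claimed. The substantive content of the proof lies entirely in the two cited ingredients, Theorem~\ref{th-allen-cahn 1} and the monotonicity result of~\cite{bfs}; the reduction above is mechanical once these are in hand, and the only non-cosmetic point that requires checking is that the assumption ``$\Omega$ bounded from below'' automatically supplies the cone bound~(a) needed in dimension $N \geq 9$, which is the principal (mild) obstacle.
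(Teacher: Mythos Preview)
Your overall strategy matches the paper's exactly: establish monotonicity via the results of \cite{bfs}, then invoke Theorem~\ref{th-allen-cahn 1}, noting that ``bounded from below'' trivially gives the cone condition~(a). However, there is a genuine gap in the first step.

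You write that ``the monotonicity theorem of~\cite{bfs} applies'' after checking only that $f(0)\ge 0$. The paper, by contrast, verifies three hypotheses before invoking Theorem~5.1 of~\cite{bfs}: (i) $\lim_{t\to 0^+} f(t)/t > 0$ (this uses the full dichotomy in $\mathtt{W}2$: either $W'(0)\neq 0$, i.e.\ $f(0)>0$, or $W'(0)=0$ and $W''(0)\neq 0$, i.e.\ $f(0)=0$ and $f'(0)>0$); (ii) $u$ is bounded, which is \emph{not} assumed in the statement and must be derived from condition~$\mathtt{W}4$ via a Keller--Osserman argument (Corollary~\ref{cor_K-O-B-type}); and (iii) $\nabla u$ is bounded, obtained from the boundedness of $u$ through Corollary~\ref{cor_gradient}. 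You have not established (ii) or (iii), and without them the monotonicity result from~\cite{bfs} cannot be applied as a black box. Your argument that $f\ge 0$ on $[0,1]$ is correct but weaker than what the paper actually uses.

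Your final paragraph upgrading ``up to isometry'' to ``up to vertical translation'' is harmless but unnecessary: Theorem~\ref{th-allen-cahn 1} already asserts $u(x)=u_0(x_N)$, which forces $\partial\Omega=\{u=0\}$ to be a horizontal hyperplane, i.e.\ $g$ constant. (Your stated reasoning---``an epigraph that is a half-space must have the form $\{x_N>c\}$''---is false as written, since affine $g$ also gives a half-space; what actually pins down $g\equiv c$ is either the conclusion $u=u_0(x_N)$ or the extra hypothesis that $g$ is bounded below.)
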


\begin{proof}
On the one hand, condition $\mathtt{W}2$) implies that $ \lim_{t\rightarrow 0^+} \frac{f(t)}{t}>0$. On the other hand, $u$ is bounded by Corollary \ref{cor_K-O-B-type}.  Hence, $ \nabla u$ is bounded too, thanks to Corollary \ref{cor_gradient}.
We can therefore apply Theorem 5.2. of \cite{bfs} to get that $\frac{\partial u}{\partial x_N}>0$ on $\Omega.$ Since $ \Omega$ satisfies condition a), being bounded from below, the desired conclusion then follows by applying Theorem \ref{th-allen-cahn 1}. \end{proof}

\bigskip

\noindent We conclude this section with the proof of Theorem \ref{th-allen-cahn 1}. 

\medskip

\noindent \textit{Proof of Theorem \ref{th-allen-cahn 1}.} To establish the desired conclusion we shall use some results proved by  K. Wang and J. Wei in \cite{ww} together with a blow-down procedure reminiscent from the analysis  for sets of least perimeter in the theory developed by E. De Giorgi.   

We first observe that, up to a translation of $ \Omega$ in the vertical direction, we may and do suppose that the origin 
$0$ of $\R^N$ belongs to $\partial \Omega$. Then, following \cite{ww}, we recall that any solution to \eqref{probleme} 
satisfying the monotonicity assumption \eqref{hyp-monot} (when extended by $0$ outside $ \Omega$) is a (local) minimizer of the functional 
$$ 
\int \frac{\vert \nabla u \vert^2}{2} + W(u) \textbf{1}_{\left\lbrace u>0\right\rbrace}
$$
in $ \R^N,$ which also satisfies $ 0 \leq u < 1$. 
Consequently, for a subsequence $\varepsilon_n \longrightarrow 0^+,$ the blowing down sequence 
$u_n(x) := u(\frac{x}{\varepsilon_n})$ converges in $L^1_{loc}(\R^N)$ to the characteristic function 
$\textbf{1}_{\Omega_\infty}$, where $\Omega_\infty$ is a non-trivial set of least perimeter in $\R^N$ (i.e., a set of least perimeter with $0 \in \partial \Omega_\infty$). Furthermore, on any connected compact set of $\R^N \setminus \partial \Omega_\infty$, either $u_n \longrightarrow 1$ uniformly or  $u_n \equiv 0$ for all large $n$. 

\medskip

At this point, for any $x \in \R^N$, we set $v(x) = u(x',-x_N)$, therefore $v_n(x) = u_n(x',-x_N)$,
$$
\left\lbrace v>0\right\rbrace = \left\lbrace (x',x_N) \in \R^N \, : \, x_N < -g(x') \right\rbrace, 
$$
$$
\left\lbrace v_n>0\right\rbrace = \left\lbrace (x',x_N) \in \R^N \, : \, x_N < \varphi_n(x')  \right\rbrace := U_n
$$
where 
$$ \varphi_n(x') = - \varepsilon_n g\left( \frac{x'}{\varepsilon_n}\right) \qquad x' \in \R^{N-1}.$$
Also observe that $v_n$ converges in $L^1_{loc}(\R^N)$ to $\textbf{1}_{- \Omega_\infty}$ and that on any connected compact set of $\R^N \setminus \partial (-\Omega_\infty)$, either $v_n \longrightarrow 1$ uniformly or  $v_n \equiv 0$ for all large $n$. Hence
$$
\textbf{1}_{U_N} \longrightarrow \textbf{1}_{- \Omega_\infty} \quad \text{in   } L^1_{loc}(\R^N). 
$$
The latter and Lemma 16.3 in \cite{Giu} tell us that the set $- \Omega_\infty$ is itself the subgraph of a mesurable function 
$\varphi_{\infty} : \R^{N-1} \mapsto [- \infty, + \infty]$, which is the a.e.-limit of $\varphi_n$ (up to a subsequence). 

\smallskip

To obtain the conclusion of the Theorem, it is enough to prove that the blowing down limit $\Omega_\infty$
 is a half-space (see Theorem 1.6 in \cite{ww}). To this end, below we perform the classical blow-down analysis for sets of least perimeter (see for instance the book of E. Giusti \cite{Giu}). Specifically, since $\Omega_\infty$ is a non-trivial set of least perimeter in $\R^N$, the blow-down sequence $ \frac{\Omega_\infty}{n}$ converges in $L^1_{loc}(\R^N)$ to a non-trivial minimal cone $\mathcal{C}$. We also recall that this procedure tell us that, the limit cone  $\mathcal{C}$ is a half-space if and only if $\Omega_\infty$ is a half-space (see e.g. Theorem 17.3 and Theorem 9.3 \cite{Giu}). 
 
\smallskip 
 
Hence, to conclude the proof of the Theorem it is enough to prove that $\mathcal{C}$ is a half- space. To this end, we first 
claim that 
\begin{equation}\label{cono-inv-cilindro}
\mathcal{C} + e_N \subseteq \mathcal{C},
\end{equation} 
 where $e_N$ denotes the unit vector $(0, \cdots, 1) \in \R^N$. 
Indeed, as $\Omega_\infty$ is a subgraph, it follows that $\Omega_\infty + t e_N \subseteq \Omega_\infty$ for every $t>0$. This yields that $ \frac{\Omega_\infty}{n} + e_N \subseteq \frac{\Omega_\infty}{n}$ for every $n$. 
Hence, by $L^1_{loc}(\R^N)$ convergence, $ \mathcal{C} + e_N \subseteq \mathcal{C}$. 
By \eqref{cono-inv-cilindro} and Lemma 2.3 of \cite{GMM} (cf. also Proposition 2.1 of  
\cite{farBer}) we see that either $\mathcal{C}$ is a half-space or it is a cylinder of the form $\mathcal{C} = \mathcal{C'} \times \R$, for some non-trivial minimal cone $ \mathcal{C'} \subseteq \R^{N-1}$.

When $ N-1 \leq 7$, all the non-trivial minimal cones are half-spaces (see \cite{Giu}), hence for $ N \leq 8$ we have that $\mathcal{C}$ is a half-space and so the Theorem is proved. 

Now we turn to the case $N \geq 9$ and we first suppose that $g$ satisfies the assumption $a)$. 
Under this assumption we see that $\R^N \setminus \mathcal{C}$ contains the cone
$$
\mathfrak{C} := \left\lbrace (x',x_N) \in \R^N \, : \, x_N > -C \vert x' \vert \right\rbrace .
$$ 
Indeed, for almost every $ x' \in \R^{N-1}$, 
$$
\varphi_{\infty}(x')\longleftarrow \varphi_n (x') \leq  \varepsilon_n C \left( 1 + \Big \vert \frac{x'}{ \varepsilon_n} \Big \vert \right) \longrightarrow C \vert x'\vert,
$$
that is
\begin{equation}\label{stima-cono}
\varphi_{\infty}(x') \leq C \vert x' \vert \quad \text{for almost every } x' \in \R^{N-1}, 
\end{equation}
which immediately implies that $\mathfrak{C} \subseteq \R^N \setminus \Omega_\infty$. 

Also, since $- \Omega_\infty$ is the subgraph of $\varphi_{\infty}$, we see that $\textbf{1}_{- \frac{\Omega_\infty}{n}} \longrightarrow \textbf{1}_{- C} \quad \text{in   } L^1_{loc}(\R^N)$. Therefore, $-C$ is itself the subgraph of a mesurable function $\psi_{\infty} : \R^{N-1} \mapsto [- \infty, + \infty]$, which is the a.e.-limit of $(\varphi_{\infty})_n$ (up to a subsequence) defined by 
$$
(\varphi_{\infty})_n(x') = \varepsilon_n \varphi_{\infty} \left( \frac{x'}{\varepsilon_n}\right) \qquad x' \in \R^{N-1}.
$$
The latter and \eqref{stima-cono} clearly imply that $\mathfrak{C} \subseteq \R^N \setminus \mathcal{C}$ . 

\smallskip

If $\mathcal{C}$ is not a half-space, then $\mathcal{C} = \mathcal{C'} \times \R$, for some non-trivial minimal cone $ \mathcal{C'} \subseteq \R^{N-1}$. The latter and the inclusion $\mathfrak{C} \subseteq \R^N \setminus \mathcal{C}$ imply that 
$ \mathcal{C'} = \R^{N-1}$, contradicting its non-triviality.   This concludes the proof when $g$ satisfies the assumption a). A similar argument provides the desired result under condition b). \qed	

\bigskip

\section{Serrin's overdetermined problem with general nonlinearity. \\The case $f(0) \geq 0$. } \label{Sect-f(0)non-neg}

In this section we consider a general locally Lipschitz-continuous function $f$ satisfying $f(0)\geq 0$ and solutions $u$, possibly unbounded, to the overdetermined problem \eqref{probleme}. In this framework we prove the rigidity of $\Omega$ and the one-dimensional symmetry of $u$ in dimension $N \leq 3$. As already discussed in the Introduction, the proofs are obtained by combining the monotonicity results recently proved in \cite{bfs} together with the geometric approach developed in \cite{FVarma}. Indeed, the following general rigidity result for monotone solutions to the overdetermined problem \eqref{probleme} holds true when $\Omega$ is a domain of $\R^2$ or $\R^3$.  

\medskip

\begin{thm}\label{Teo1-lip-gen} Assume $ N=2,3$ and let $\Omega \subset \R^N$ be a domain of class $C^3$. \\
Let $f \in {Lip}_{loc}([0,+\infty))$ and let $u \in C^2(\overline{\Omega})$ be a  solution to \eqref{probleme} such that 
	
\begin{equation}\label{crescita-energia-gen}
		\disp\int_{B(0,R)\cap \Omega}|\nabla u|^{2} = o(R^2 \ln R) \qquad \textit{as } \, R \longrightarrow \infty.
\end{equation}  	
If $u$ is monotone, i.e., 
\begin{equation}\label{hyp-monot-N}
		\frac{\partial u}{\partial x_N}(x)>0 \qquad \forall x \in \Omega,
\end{equation}
then, $\Omega=\R^N_+$ up to isometry and there exists $u_{0}:[0,+\infty) \to (0,+\infty)$ strictly increasing such that
	\begin{equation*}
		u(x)=u_{0}(x_N) \quad \forall x \in \R^N_{+}.
	\end{equation*}
\end{thm}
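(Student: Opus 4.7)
The strategy is to implement the geometric Poincar\'e-type approach of \cite{FVarma}, using the monotonicity hypothesis to produce stability and the Serrin overdetermined condition to control the resulting boundary term. First, differentiating the equation in $x_N$ gives $-\Delta(\partial_{x_N}u)=f'(u)\partial_{x_N}u$ in $\Omega$; since $\partial_{x_N}u>0$ by \eqref{hyp-monot-N}, a Picone-type argument yields the \emph{stability} inequality $\int_\Omega f'(u)\varphi^2 \leq \int_\Omega|\nabla\varphi|^2$ for every $\varphi\in C^1_c(\Omega)$.

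Next I upgrade this inequality to test functions $\varphi\in C^1_c(\R^N)$ not vanishing on $\partial\Omega$, at the price of a boundary contribution. Because $u\equiv 0$ on $\partial\Omega$, the full boundary gradient is $\nabla u=-\mathfrak{c}\,\eta$ with $\eta$ the outward unit normal, so all tangential derivatives of $u$ and of $|\nabla u|$ along $\partial\Omega$ vanish. An integration by parts then converts the boundary contribution into $\mathfrak{c}^2\int_{\partial\Omega}H_\partial\,\varphi^2$, where $H_\partial$ denotes the mean curvature of $\partial\Omega$.

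The heart of the proof is the geometric Poincar\'e inequality. Combining the pointwise Sternberg--Zumbrun identity
\begin{equation*}
|\nabla^2 u|^2-|\nabla|\nabla u||^2 \;=\; |\nabla u|^2\bigl(|A_\Sigma|^2+|\nabla_T\log|\nabla u||^2\bigr) \qquad \text{on } \{|\nabla u|>0\},
\end{equation*}
in which $A_\Sigma$ is the second fundamental form of the level set $\Sigma$ of $u$ through the given point and $\nabla_T$ is tangential along $\Sigma$, with the boundary-sensitive stability inequality applied to $\varphi=|\nabla u|\,\phi$, I arrive at
\begin{equation*}
\int_\Omega |\nabla u|^2\bigl(|A_\Sigma|^2+|\nabla_T\log|\nabla u||^2\bigr)\phi^2 + \mathfrak{c}^2\int_{\partial\Omega}H_\partial\,\phi^2 \;\leq\; \int_\Omega |\nabla u|^2|\nabla\phi|^2
\end{equation*}
for every $\phi\in C^1_c(\R^N)$. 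I then take $\phi=\phi_R$ a logarithmic cut-off with $\phi_R\equiv 1$ on $B_R$, vanishing outside $B_{R^2}$, and satisfying $|\nabla\phi_R|^2\lesssim |x|^{-2}(\log R)^{-2}$ on the annulus. A dyadic decomposition of the annulus, together with the growth assumption \eqref{crescita-energia-gen}, shows that $\int_\Omega|\nabla u|^2|\nabla\phi_R|^2\to 0$ as $R\to\infty$; this is exactly the step where the dimensional restriction $N\leq 3$ enters, since the log-cut-off estimate hinges on the sharp growth rate $o(R^2\log R)$, which is the natural threshold in low dimension. Passing to the limit forces the non-negative quantities to vanish identically: the level sets of $u$ are affine hyperplanes, $|\nabla u|$ is constant on each of them, and $\partial\Omega$ has vanishing mean curvature. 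Combined with the monotonicity \eqref{hyp-monot-N} and the $C^3$-regularity of $\partial\Omega$, this yields $\Omega=\R^{N}_{+}$ up to isometry and $u=u_0(x_N)$ for some strictly increasing $u_0:[0,+\infty)\to(0,+\infty)$.

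The main obstacle is ensuring that the boundary term $\mathfrak{c}^2\int_{\partial\Omega}H_\partial\,\phi^2$ has a definite, compatible sign (or at least combines coercively with the interior non-negative pieces) so that passing to the limit really forces every contribution to vanish. This is where the overdetermined condition $|\nabla u|=\mathfrak{c}$ is indispensable: without it the integration by parts along $\partial\Omega$ would produce uncontrolled tangential-gradient terms. A more mechanical difficulty is matching the dyadic annular decomposition with the sharp growth rate $o(R^2\log R)$, which is precisely why the argument closes for $N=2,3$ and not in higher dimensions.
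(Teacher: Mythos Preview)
Your strategy coincides with the paper's: apply the geometric Poincar\'e inequality of \cite{FVarma} for monotone solutions of the overdetermined problem, insert a logarithmic cut-off, and use \eqref{crescita-energia-gen} to make the right-hand side vanish, whence the level sets are flat and $u$ is one-dimensional. The paper differs only in that it invokes Theorem~1.1 of \cite{FVarma} directly, obtaining
\[
\int_{\Omega}\bigl(|\nabla u|^{2}\mathcal{K}^{2}+\bigl|\nabla_{T}|\nabla u|\bigr|^{2}\bigr)\phi^{2}\;\le\;\int_{\Omega}|\nabla u|^{2}|\nabla\phi|^{2}
\]
for every Lipschitz $\phi$ with compact support in $\R^{N}$, with \emph{no} boundary term; the boundary analysis is already absorbed into that result.

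This is precisely where your write-up is incomplete. You derive an inequality carrying $\mathfrak{c}^{2}\int_{\partial\Omega}H_{\partial}\,\phi^{2}$ on the left and then need this term to be nonnegative in order to conclude that every piece vanishes in the limit; you explicitly flag this as ``the main obstacle'' but do not resolve it. Without an a~priori sign on $H_{\partial}$ you cannot infer, from the vanishing of the right-hand side, that the interior curvature integral is zero (a negative boundary contribution could compensate a positive interior one). The fix is either to cite Theorem~1.1 of \cite{FVarma} as the paper does, or to reproduce the boundary computation carried out there, which shows that under the overdetermined conditions $u=0$ and $\partial_{\eta}u=\mathfrak{c}$ on $\partial\Omega$ the boundary contribution has the favourable sign. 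The remaining ingredients of your proposal---the logarithmic cut-off, the dyadic/Fubini estimate exploiting the $o(R^{2}\ln R)$ growth, and the level-set analysis yielding flatness---match the paper's argument.
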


\smallskip

\begin{rem}
Note explicitly that in the above Theorem :

\smallskip

\noindent (i) no restriction is imposed on the sign of $f(0)$ or on $\mathfrak{c}$; 

\noindent (ii) $\Omega$ is not assumed to be an epigraph. 
\end{rem}

\smallskip

\begin{proof} We use the geometric approach developped in \cite{FVarma}. To this end we first recall that, given a smooth function $v$, one may consider the level sets $\{v=c\}$. In the vicinity of $\{\nabla v \neq 0\}$, these level sets are smooth manifolds, so one can introduce the principal curvatures
\begin{equation*}
	k_{1}, \cdots , k_{N-1}
\end{equation*} 
at any point of such manifolds.
We set 
\begin{equation*}
	\mathcal{K}:=\sqrt{k_{1}^{2}+\cdots+k_{N-1}^{2}}.
\end{equation*}
Also, we consider the tangential gradient along level sets of $v$ at these points, that is
\begin{equation*}
	\nabla_{T}v := \nabla v -\Big( \nabla v \cdot  \frac{\nabla v}{|\nabla v|}\Big)\frac{\nabla v}{|\nabla v|} .
\end{equation*}
With these notations, and given that $u$ is monotone with respect to the vertical direction, we can apply Theorem 1.1 of \cite{FVarma} to obtain 
	\begin{equation}\label{inegalite_poincare}
		\displaystyle\int_{\Omega} (|\nabla u|^{2} \mathcal{K}^{2}+|\nabla_{T}|\nabla u||^{2})\phi^{2} \leq \displaystyle\int_{\Omega}|\nabla u|^{2}|\nabla \phi|^{2}, 
	\end{equation}
	for any compactly supported, Lipschitz-continuous function $ \phi : \R^N \mapsto \R$.

\smallskip

Now, for any $R\geq 1$ and $x \in \R^N$ we define 
	\begin{equation*}
		\phi_{R}(x):=\textbf{1}_{B_{\sqrt{R}}}(x)+\dfrac{2\ln(R/|x|)}{\ln R}\textbf{1}_{B_{R}\backslash B_{\sqrt{R}}}(x).
	\end{equation*}
Then, $\phi_{R}$ can be inserted into \eqref{inegalite_poincare} to get 
	\begin{equation}\label{stima-curvature}
		\disp\int_{\Omega \cap B_{\sqrt{R}}}|\nabla u|^{2}\mathcal{K}^{2}+ |\nabla_{T}|\nabla u||^{2}\leq \frac{4}{\ln^2R }\disp\int_{(B_{R}\backslash B_{\sqrt{R}}) \cap \Omega}\frac{|\nabla u(x)|^{2}}{|x|^{2}}dx.
	\end{equation}
	Moreover, since  $\disp\int_{|x|}^{R}\frac{2}{t^{3}}dt=\frac{1}{|x|^{2}}-\frac{1}{R^{2}}$, we have  
	\begin{equation*}
		\begin{split}
			\disp\int_{(B_{R}\backslash B_{\sqrt{R}}) \cap \Omega} \frac{|\nabla u(x)|^{2}}{|x|^{2}}&\leq \disp\int_{(B_{R}\backslash B_{\sqrt{R}}) \cap \Omega} \disp\int_{|x|}^{R}\frac{2|\nabla u(x)|^{2}}{t^{3}}dtdx+\disp\int_{(B_{R}\backslash B_{\sqrt{R}}) \cap \Omega} \frac{|\nabla u|^{2}}{R^{2}}\\
			&\leq \disp\int_{\sqrt{R}}^{R}\disp\int_{B_{t}\backslash B_{\sqrt{R}}\cap \Omega} \frac{2|\nabla u(x)|^{2}}{t^{3}}dxdt +\disp\int_{(B_{R}\backslash B_{\sqrt{R}}) \cap \Omega} \frac{|\nabla u|^{2}}{R^{2}}\\
& \leq \disp\int_{\sqrt{R}}^{R} \frac{2}{t^3} \left[ \int_{B_{t}\backslash B_{\sqrt{R}}\cap \Omega} |\nabla u(x)|^{2} dx\right] dt  
+ \frac{1}{R^2}  \int_{(B_{R}\backslash B_{\sqrt{R}}) \cap \Omega} |\nabla u |^{2} \\ 
& = o(\ln^2 R) \qquad \textit{as } \, R \longrightarrow \infty, \\			
		\end{split},
	\end{equation*}
where in the latter line we have used the assumption \eqref{crescita-energia-gen}. Using this information, and letting 
$ R \longrightarrow \infty$ in \eqref{stima-curvature}, we obtain that $\mathcal{K}^{2} + |\nabla_{T}|\nabla u||^{2} \equiv 0$ 
in $\Omega$. The latter implies that $\Omega$ is an affine half-space and $u$ is one-dimensional (see the level set analysis in Section 2.1. of \cite{FVarma}). This proves the result. \end{proof}

\medskip
The results presented below are entirely new in the three-dimensional case, even for bounded solutions. When $N=2$, they are new for unbounded solutions, since the case of bounded solutions is covered by the results proved in \cite{RRS17} (as already mentioned in the Introduction).

\begin{thm}\label{Teo2-lip} Let $\Omega \subset \R^2$ be a globally Lipschitz-continuous epigraph bounded from below and with boundary of class $ C^3$ and let $u \in C^1(\overline{\Omega}) \cap C^2(\Omega)$ be a  solution to \eqref{probleme}. 

\noindent Assume that one of the following hypotheses holds true :

	\begin{align*}
		& \tag{H1} f \in {Lip}_{loc}([0,+\infty)), f(0)\geq 0 \text{ and }  \nabla u \in L^{\infty}(\Omega);
		\\[4pt]
		& \tag{H2}\left\{
		\begin{array}{ccc}
			f \in {Lip}_{loc}([0,+\infty)), \quad f(0)\geq 0, \quad \exists \, \zeta > 0 \, : \, f(t) \leq 0  \quad \textit{in}  \quad [\zeta,+\infty),\\
			\\
			\exists \, \alpha \in [0,1) \, : \quad  u(x)= O(\vert x \vert^{\alpha}) \qquad  \textit{as}  \quad \vert x \vert \longrightarrow \infty; \hfill
		\end{array}
		\right.
		\\[4pt]
		& \tag{H3}\left\{
		\begin{array}{ccc}
			f \in {Lip}([0,+\infty)), \quad f(0)\geq 0, \quad \exists \, \zeta > 0 \, : \, f(t) \geq 0  \quad \textit{in}  \quad [\zeta,+\infty),\\
			\\
			u(x)= o (\ln \vert x \vert), \qquad  \textit{as}  \quad \vert x \vert  \longrightarrow \infty; \hfill
		\end{array}
		\right.
		\\[4pt]
		& \tag{H4}\left\{ \begin{array}{ccc}
			f \in {Lip}([0,+\infty)), \quad f(0)= 0, \quad f(t) \leq 0  \quad \textit{in}  \quad(0,+\infty),\\
			\\
			u(x) = o(\vert x \vert  \ln^{\frac{1}{2}} \vert x \vert), \qquad  \textit{as}  \quad \vert x \vert \longrightarrow \infty. \hfill
		\end{array}
		\right.
	\end{align*}
\bigskip

Then, $\Omega=\R^2_+$ up to a vertical translation and there exists $u_{0}:[0,+\infty) \to (0,+\infty)$ strictly increasing such that
	\begin{equation*}
		u(x)=u_{0}(x_2) \quad \forall x \in \R^2_{+}.
	\end{equation*}
\end{thm}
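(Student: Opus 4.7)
The plan is to reduce Theorem \ref{Teo2-lip} to Theorem \ref{Teo1-lip-gen} by verifying, under each of the hypotheses (H1)--(H4), both the monotonicity \eqref{hyp-monot-N} and the energy growth condition \eqref{crescita-energia-gen}. Once these two ingredients are in hand, Theorem \ref{Teo1-lip-gen} (applied with $N=2$, and since $\Omega$ is a $C^3$ domain) immediately yields that $\Omega$ is an affine half-space and $u$ is one-dimensional; the extra information that $\Omega$ is an epigraph bounded from below then forces the isometry in Theorem \ref{Teo1-lip-gen} to be a vertical translation, yielding the stated conclusion.

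First I would establish the monotonicity $\partial u/\partial x_2 > 0$ on $\Omega$. Since $\Omega$ is bounded from below and defined by a globally Lipschitz (hence uniformly continuous) function $g$, item (1) after Definition \ref{Def-classG} shows $g \in \mathcal{G}$, so the monotonicity results of \cite{bfs} are applicable provided $u$ has suitable boundedness/gradient-boundedness properties. Under (H1) these are hypotheses. Under (H2) the sign condition $f \leq 0$ on $[\zeta,+\infty)$ together with the sub-linear growth $u = O(|x|^\alpha)$ feeds into the uniform bounds from Section \ref{Appendix1} (in the spirit of Corollary \ref{cor_K-O-B-type} and Corollary \ref{cor_gradient}) to give $\|u\|_\infty + \|\nabla u\|_\infty < \infty$. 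Under (H3), global Lipschitz continuity of $f$ together with $u = o(\ln|x|)$ gives the needed $L^\infty$-control via the same corollaries; under (H4), the non-positivity of $f$ on $(0,+\infty)$ combined with the growth bound $u = o(|x|\ln^{1/2}|x|)$ plays the analogous role. In each case one then invokes the monotonicity theorem of \cite{bfs} to conclude \eqref{hyp-monot-N}.

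Next I would verify \eqref{crescita-energia-gen}. Under (H1), $\nabla u \in L^\infty(\Omega)$ yields the trivial bound $\int_{B_R \cap \Omega}|\nabla u|^2 \leq C R^2 = o(R^2 \ln R)$. Under (H2)--(H4), one appeals to the energy estimates proved in Section \ref{Appendix1}, which are obtained by the standard Caccioppoli argument: multiplying $-\Delta u = f(u)$ by $u\phi_R^2$ with a logarithmic cutoff $\phi_R$ and integrating by parts gives
\[
\int_{\Omega}|\nabla u|^2\phi_R^2 \;\leq\; C\int_{\Omega} u^2 |\nabla \phi_R|^2 + \int_{\Omega} f(u)\,u\,\phi_R^2 .
\]
Under (H2) and (H4), the sign of $f$ at infinity allows one to discard or absorb the nonlinear term, so that the growth of $u$ directly controls the right-hand side. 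Under (H3), one uses instead the global Lipschitz continuity, $|f(u)u|\leq C u(1+u)$, combined with the logarithmic bound on $u$.

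The main obstacle will be the energy estimate under hypothesis (H3), where $f$ is only assumed non-negative for large values, so the nonlinear term cannot simply be dropped: it must be absorbed via the global Lipschitz bound on $f$ and the $o(\ln|x|)$ growth of $u$, and these two ingredients together just barely produce the $o(R^2 \ln R)$ threshold required by Theorem \ref{Teo1-lip-gen}. Hypothesis (H4) is conceptually similar but more delicate in the cutoff choice because of the larger allowed growth $o(|x|\ln^{1/2}|x|)$, whereas (H1) and (H2) are comparatively straightforward.
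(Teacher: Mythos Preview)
Your overall strategy matches the paper's, and cases (H1), (H2), (H4) are essentially right. The genuine gap is in (H3), both for monotonicity and for the energy estimate.

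First, under (H3) (and (H4)) you cannot deduce global $L^\infty$-control on $u$ or $\nabla u$ ``via the same corollaries'': the growth hypotheses $u=o(\ln|x|)$ resp.\ $u=o(|x|\ln^{1/2}|x|)$ explicitly allow unbounded solutions, and nothing in Corollary~\ref{cor_K-O-B-type} or Corollary~\ref{cor_gradient} applies. The paper instead invokes Theorem~1.3 of \cite{bfs}, which only needs control of $u$ on finite strips (and the growth assumption trivially gives at most exponential growth there). This is a misattribution you should fix.

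Second, and more seriously, your Caccioppoli argument for (H3) fails quantitatively. Multiplying by $u\phi_R^2$ and using only the global Lipschitz bound $|f(u)u|\le C u(1+u)$ produces a term $\int_{B_R\cap\Omega} u^2\,\phi_R^2$ of order $o\bigl(R^2(\ln R)^2\bigr)$, which is \emph{not} $o(R^2\ln R)$; the logarithmic cutoff does not save this, since the bulk term lives on all of $B_R$. The paper's device is to multiply instead by $(u-M(2R))\varphi_R^2$, where $M(t)=\sup_{B_t\cap\Omega}u$. Then on $\{u\ge\zeta\}$ one has $f(u)\ge 0$ and $u-M(2R)\le 0$, so the nonlinear term has the good sign there; on $\{u<\zeta\}$ it is bounded by $\|f\|_{L^\infty([0,\zeta])}\,M(2R)$, giving a contribution $O(M(2R)R^2)=o(R^2\ln R)$. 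This shift to $(u-M(2R))$ introduces a nonzero boundary term $\int_{\partial\Omega\cap B_{2R}}|\mathfrak{c}|\,M(2R)\,\varphi_R^2$, which the paper controls via $\mathcal{H}^1(\partial\Omega\cap B_{2R})\le C R$ using the global Lipschitz assumption on $g$; you do not mention this boundary contribution at all.
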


\smallskip

\begin{proof} Assume (H1).  By Corollary 1.5 of \cite{bfs} $u$ is monotone, i.e., $\frac{\partial u}{\partial x_2}>0$ in 
$\Omega$. Furthermore, we have 
$$
\int_{B(0,R)\cap \Omega}|\nabla u|^{2}  \leq \Vert \nabla u \Vert^2_{L^\infty(\Omega)} \pi R^2 \qquad R>0,
$$
The conclusion then follows from Theorem \ref{Teo1-lip-gen}. 

Assume (H2).  By Proposition \ref{prop_princ-maxBCN} $u$ is bounded. Then, also $\nabla u$ is bounded (see Corollary \ref {cor_gradient}).  We can apply the previous step to conclude. 

Assume (H3).  The growth assumption on $u$ implies that $u$ has at most exponential growth on any finite strip $\Omega \cap \left \lbrace x_N <R \right\rbrace$. Therefore, we can apply Theorem 1.3 of \cite{bfs} to obtain that $\frac{\partial u}{\partial x_2}>0$ in $\Omega$. To get the desired result, all we need to do is to prove that  \eqref{crescita-energia-gen} holds true. To this end, for $t>0$ we set $ M(t) := \sup_{B(0,t)\cap \Omega} u$ and then  we pick a function $\varphi \in C^{1}_c (\R^2)$ such that $ 0 \le \varphi \le 1,$ 
\begin{equation*}\label{cut-off}
\varphi(x) := \begin{cases}
1\quad & {\rm if} \qquad \vert x \vert \le 1,\\
0 & {\rm if} \qquad \vert x \vert \ge 2,
\end{cases}
\end{equation*}
and, for any $R > 0 $ and $x \in \R^2$ we set $\varphi_R(x) = \varphi({x \over {R}})$. 

Then we multiply the equation by 
$(u-M(2R)) \varphi_R^2$ and we integrate by parts to find 
\begin{equation*}
		\begin{split}
&\int_{B_{2R} \cap \Omega}|\nabla u|^{2} \varphi_R^2 + \int_{B_{2R} \cap \Omega} 2 \varphi_R(u-M(2R)) \nabla u \nabla \varphi_R  - \int_{B_{2R} \cap \partial \Omega} \frac{\partial u}{\partial \eta} (u-M(2R)) \varphi_R^2 \\
& = \int_{B_{2R} \cap \Omega} f(u) (u-M(2R)) \varphi_R^2 \leq \int_{(B_{2R} \cap \Omega) \cap \left\lbrace u < \zeta \right\rbrace } f(u) (u-M(2R)) \varphi_R^2 \\
& \leq 4 \omega_2 \Vert f \Vert_{L^{\infty}([0, \zeta])} M(2R) R^2. 
		\end{split}
\end{equation*}
Thus
\begin{equation*}
		\begin{split}
\int_{B_{2R} \cap \Omega}|\nabla u|^{2} \varphi_R^2 & \leq - \int_{B_{2R} \cap \Omega} 2 \varphi_R(u-M(2R)) \nabla u \nabla \varphi_R  \\
&+ \vert \mathfrak{c}\vert M(2R) \int_{B_{2R} \cap \partial \Omega} \varphi_R^2 + 4 \omega_2 \Vert f \Vert_{L^{\infty}([0, \zeta])} M(2R) R^2 \\
& \leq \frac{1}{2} \int_{B_{2R} \cap \Omega} |\nabla u|^{2} \varphi_R^2 + 2  \int_{B_{2R} \cap \Omega} (u-M(2R))^2 \vert \nabla 
\varphi_R \vert^2   \\
& + \vert \mathfrak{c} \vert M(2R) \mathcal{H}^1(B_{2R} \cap \partial \Omega) +  4 \omega_2 \Vert f \Vert_{L^{\infty}([0, \zeta])} M(2R) R^2 \\
		\end{split}
\end{equation*}
and so 
\begin{equation}\label{quasi-prima-stima-energia-lip}
		\begin{split}
\int_{B_{R} \cap \Omega}|\nabla u|^{2} & \leq 4 \int_{B_{2R} \cap \Omega} (M(2R)-u)^2 \vert \nabla \varphi_R \vert^2  \\
& + 2 \vert \mathfrak{c} \vert M(2R) \mathcal{H}^1(B_{2R} \cap \partial \Omega) +  8 \omega_2 \Vert f \Vert_{L^{\infty}([0, \zeta])} M(2R) R^2 \\
& \leq 4 M^2(2R) \int_{B_{2R} \cap \Omega} \vert \nabla \varphi_R \vert^2  + 2 \vert \mathfrak{c} \vert M(2R) 
\mathcal{H}^1(B_{2R} \cap \partial \Omega)  \\
& +  8 \omega_2 \Vert f \Vert_{L^{\infty}([0, \zeta])} M(2R) R^2  \leq 4 M^2(2R) \Vert \nabla \varphi \Vert^2_{L^{\infty}(\R^2)} R^{-2} \omega_2 (2R)^2 \\ 
& + 2 \vert \mathfrak{c} \vert M(2R) \mathcal{H}^1(B_{2R} \cap \partial \Omega)  +  8 \omega_2 \Vert f \Vert_{L^{\infty}([0, \zeta])} M(2R) R^2 \\
& \leq 16 \omega_2 \Vert \nabla \varphi \Vert^2_{L^{\infty}(\R^2)} M^2(2R) +  2 \vert \mathfrak{c}\vert M(2R)   
\mathcal{H}^{1} (B_{2R} \cap \partial \Omega)\\
& +  8 \omega_2 \Vert f \Vert_{L^{\infty}([0, \zeta])} M(2R) R^2,
		\end{split}
\end{equation}
where $\mathcal{H}^{1}$ denotes the 1-dimensional Hausdorff measure.

We also observe that 
\begin{equation*}
\mathcal{H}^{1} (B_{2R} \cap \partial \Omega) \leq \int_{(-2R,2R) \subset \R} \sqrt{1 + \vert g'(x_1) \vert^2} dx_1 
\leq 4 (1 + \Vert g' \Vert_{L^{\infty}(\R)} ) R, 
\end{equation*}
since $g$ is globally Lipschitz-continuous by assumption. Then, from \eqref{quasi-prima-stima-energia-lip} and the growth assumption on $u$, we deduce that 
$$
\int_{B(0,R)\cap \Omega}|\nabla u|^{2} = o(R^2 \ln R) \qquad \textit{as } \, R \longrightarrow \infty.
$$ 

Assume (H4). In view of the growth assumption on $u$ we can apply Theorem 1.3 of \cite{bfs} to obtain that $\frac{\partial u}{\partial x_2}>0$ in $\Omega$. To conclude, we prove that \eqref{crescita-energia-gen} holds true. To this purpose we multiply the equation by $u \varphi_R^2$ and we integrate by parts to find 
\begin{equation*}
		\begin{split}
&\int_{B_{2R} \cap \Omega}|\nabla u|^{2} \varphi_R^2 + \int_{B_{2R} \cap \Omega} 2 \varphi_R u \nabla u \nabla \varphi_R  - \int_{B_{2R} \cap \partial \Omega} \frac{\partial u}{\partial \eta} u \varphi_R^2 = \int_{B_{2R} \cap \Omega} f(u) u \varphi_R^2 \leq 0,
		\end{split}
\end{equation*}
hence
\begin{equation*}
		\begin{split}
\int_{B_{2R} \cap \Omega}|\nabla u|^{2} \varphi_R^2 & \leq  - \int_{B_{2R} \cap \Omega} 2 \varphi_R u \nabla u \nabla \varphi_R  + \int_{B_{2R} \cap \partial \Omega} \frac{\partial u}{\partial \eta} u \varphi_R^2 \\
& = - \int_{B_{2R} \cap \Omega} 2 \varphi_R u \nabla u \nabla \varphi_R +0 \\
& \leq \frac{1}{2} \int_{B_{2R} \cap \Omega} |\nabla u|^{2} \varphi_R^2 + 2  \int_{B_{2R} \cap \Omega} u^2 \vert \nabla 
\varphi_R \vert^2.
		\end{split}
\end{equation*}
Therefore, for any $R>0$, 
\begin{equation*}
		\begin{split}
\int_{B_{R} \cap \Omega}|\nabla u|^{2} \leq  \int_{B_{2R} \cap \Omega}|\nabla u|^{2} \varphi_R^2  \leq 
4 \int_{B_{2R} \cap \Omega} u^2 \vert \nabla  \varphi_R \vert^2 \leq 16 \omega_2 \Vert \nabla  \varphi \Vert^2_{L^{\infty}(\R^2)}   M^2(2R) 
		\end{split}
\end{equation*}
and the condition  \eqref{crescita-energia-gen}  then follows. \end{proof}

\medskip

\begin{thm}\label{Teo3-lip} Let $\Omega \subset \R^3$ be a globally Lipschitz-continuous epigraph bounded from below and with boundary of class $C^3$ and let $u \in C^1(\overline{\Omega}) \cap C^2(\Omega)$ be a  solution to \eqref{probleme}. 

\noindent Assume that one of the following assumptions holds true :
	
\smallskip

 \begin{equation}\tag{A1}
	\left\{
	\begin{array}{ccc}
		f \in {Lip}([0,+\infty)), \quad f(t) \geq 0  \quad \textit{for any}  \quad t \geq 0, \\
		\\
		u(x)= o (\ln \vert x \vert), \qquad  \textit{as}  \quad \vert x \vert  \longrightarrow \infty; \hfill
	\end{array}
	\right.
\end{equation}
\begin{equation}\tag{A2}
	\left\{
	\begin{array}{ccc}
		f \in {Lip}_{loc}([0,+\infty)), \quad \exists \, \zeta \geq 0 \, : \, f(t) \geq 0  \quad \textit{on} \quad [0,\zeta] \quad\textit{and} \quad
		f(t)\leq 0 \quad \textit{on} \quad [\zeta,+\infty), \\
		\\
		\exists \, \alpha \in [0,1) \, : \quad  u(x)= O(\vert x \vert^{\alpha}) \qquad  \textit{as}  \quad \vert x \vert \longrightarrow \infty; \hfill
	\end{array}
	\right.
\end{equation}
\bigskip

Then, $\Omega=\R^3_+$ up to a vertical translation and there exists $u_{0}:[0,+\infty) \to (0,+\infty)$ strictly increasing such that
	\begin{equation*}
		u(x)=u_{0}(x_3) \quad \forall x \in \R^3_{+}.
	\end{equation*}
\end{thm}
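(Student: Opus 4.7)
The plan is to reduce each case to Theorem~\ref{Teo1-lip-gen} in dimension $N=3$, which requires verifying both the pointwise monotonicity $\partial u/\partial x_3>0$ on $\Omega$ and the energy growth $\int_{B(0,R)\cap\Omega}|\nabla u|^{2}=o(R^{2}\ln R)$. The overall template mimics the proof of Theorem~\ref{Teo2-lip}, but the three-dimensional setting forces the energy estimate to rely on the sign structure of $f$ rather than on an $L^{\infty}$ bound for $\nabla u$.

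Under~(A1), the bound $u=o(\ln|x|)$ gives at most exponential growth on every horizontal strip $\Omega\cap\{x_3<R\}$, so Theorem~1.3 of~\cite{bfs} yields $\partial u/\partial x_3>0$. For the energy estimate I would follow case~(H3) of Theorem~\ref{Teo2-lip}: with $M(2R):=\sup_{B_{2R}\cap\Omega}u$ and the same cut-off $\varphi_R$, testing the equation against $(u-M(2R))\varphi_R^{2}$ kills the reaction term since $f(u)(u-M(2R))\leq 0$. The surface estimate $\mathcal{H}^{2}(B_{2R}\cap\partial\Omega)\leq CR^{2}$ (from $g$ Lipschitz) together with a standard Cauchy--Schwarz absorption deliver
\[
\int_{B_R\cap\Omega}|\nabla u|^{2}\leq C_1\,M^{2}(2R)\,R+C_2\,|\mathfrak{c}|\,M(2R)\,R^{2},
\]
which is $o(R^{2}\ln R)$ thanks to $M(2R)=o(\ln R)$.

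Under~(A2), the sign condition $f\leq 0$ on $[\zeta,+\infty)$ together with $u=O(|x|^{\alpha})$, $\alpha<1$, triggers Proposition~\ref{prop_princ-maxBCN} to yield $u\in L^{\infty}(\Omega)$, then Corollary~\ref{cor_gradient} gives $\nabla u\in L^{\infty}(\Omega)$; Corollary~1.5 of~\cite{bfs} (applicable since $g$ is Lipschitz, $\Omega$ is bounded from below, and $f(0)\geq 0$) then produces the monotonicity. The main obstacle is the energy estimate: the naive bound $\int|\nabla u|^{2}\leq\|\nabla u\|_{\infty}^{2}\,|B_R\cap\Omega|$ gives $O(R^{3})$ in three dimensions, which is too large. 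I would circumvent this by splitting at the threshold $\zeta$. Testing against $(u-\zeta)_+\varphi_R^{2}$ exploits the favorable sign $f(u)(u-\zeta)_+\leq 0$ on $\{u>\zeta\}$ and the vanishing of $(u-\zeta)_+$ on $\partial\Omega$, producing
\[
\int_{B_R\cap\Omega\cap\{u>\zeta\}}|\nabla u|^{2}\leq CR.
\]
Symmetrically, testing against $(\zeta-u)_+\varphi_R^{2}$ exploits $-f(u)(\zeta-u)_+\leq 0$ on $\{u<\zeta\}$, with the only surviving boundary contribution $|\mathfrak{c}|\zeta\,\mathcal{H}^{2}(B_{2R}\cap\partial\Omega)=O(R^{2})$, producing
\[
\int_{B_R\cap\Omega\cap\{u<\zeta\}}|\nabla u|^{2}\leq CR^{2}.
\]
Adding the two pieces yields $O(R^{2})=o(R^{2}\ln R)$, closing the argument. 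The degenerate case $\zeta=0$ is subsumed by testing directly against $u\varphi_R^{2}$, which gives $O(R)$ in one step. With monotonicity and the energy growth in hand, Theorem~\ref{Teo1-lip-gen} concludes that $\Omega=\R^3_+$ up to a vertical translation and $u=u_0(x_3)$ with $u_0$ strictly increasing.
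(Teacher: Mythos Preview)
Your treatment of case (A1) is essentially identical to the paper's: monotonicity from Theorem 1.3 of \cite{bfs}, then the energy estimate via the test function $(u-M(2R))\varphi_R^{2}$, then Theorem~\ref{Teo1-lip-gen}.

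For case (A2) your argument is correct but takes a different route from the paper. The paper exploits the full strength of Proposition~\ref{prop_princ-maxBCN}, namely the sharp bound $u\le\zeta$ (not merely $u\in L^{\infty}$). Once $u\le\zeta$, one replaces $f$ by the truncation $\tilde f:=f\cdot\mathbf{1}_{[0,\zeta]}$, which is nonnegative and globally Lipschitz (note $f(\zeta)=0$ from the two sign conditions), and then simply invokes case (A1) for the pair $(u,\tilde f)$. This avoids any separate energy computation for (A2). Your approach instead keeps $f$ untouched and splits the Dirichlet energy at the level set $\{u=\zeta\}$ via the two test functions $(u-\zeta)_+\varphi_R^{2}$ and $(\zeta-u)_+\varphi_R^{2}$; the signs of $f$ on each side make the reaction terms favorable, and you recover $O(R)+O(R^{2})=o(R^{2}\ln R)$. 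Both arguments are valid. The paper's reduction is shorter and shows that (A2) is really a corollary of (A1); your splitting is more self-contained and would still work even if one only knew $u$ bounded rather than $u\le\zeta$ (though in fact the latter holds, making the piece on $\{u>\zeta\}$ empty).
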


\medskip

\begin{proof} 
Assume (A1). The proof follows the same lines as the proof of the previous Theorem under the assumption (H3). First, 
by Theorem 1.3 of \cite{bfs} we obtain that $\frac{\partial u}{\partial x_3}>0$ in $\Omega$. Then we multiply the equation by 
$(u-M(2R)) \varphi_R^2$, where now $ \varphi \in C^3_c(\R^3)$, and we integrate by parts to find 
\begin{equation*}
		\begin{split}
&\int_{B_{2R} \cap \Omega}|\nabla u|^{2} \varphi_R^2 + \int_{B_{2R} \cap \Omega} 2 \varphi_R(u-M(2R)) \nabla u \nabla \varphi_R  - \int_{B_{2R} \cap \partial \Omega} \frac{\partial u}{\partial \eta} (u-M(2R)) \varphi_R^2 \\
& = \int_{B_{2R} \cap \Omega} f(u) (u-M(2R)) \varphi_R^2  \leq 0,
		\end{split}
\end{equation*}
and so
\begin{equation}
		\begin{split}
\int_{B_{R} \cap \Omega}|\nabla u|^{2} & \leq 4 \int_{B_{2R} \cap \Omega} (M(2R)-u)^2 \vert \nabla \varphi_R \vert^2  + 2 \vert \mathfrak{c} \vert M(2R) \mathcal{H}^2(B_{2R} \cap \partial \Omega) \\
& \leq 32 \omega_3 \Vert \nabla \varphi \Vert^2_{L^{\infty}(\R^3)} M^2(2R) R
 + 2 \vert \mathfrak{c} \vert M(2R) \mathcal{H}^2 (B_{2R} \cap \partial \Omega). 
		\end{split}
\end{equation}
where $\omega_3$ is the measure of the 3-dimensional unit ball and $\mathcal{H}^{2}$ denotes the 2-dimensional Hausdorff measure.

On the other hand, 
\begin{equation*}
\mathcal{H}^{2} (B_{2R} \cap \partial \Omega) \leq \int_{B'(0',2R) \subset \R^2} \sqrt{1 + \vert \nabla^{'} g(x') \vert^2} dx' 
\leq 4 \pi (1 + \Vert \nabla^{'} g \Vert_{L^{\infty}(\R^2)}  ) R^2, 
\end{equation*}
since $g$ is globally Lipschitz-continuous by assumption. Consequently, for any $R>0$, we have 
\begin{equation*}
		\begin{split}
\int_{B_{R} \cap \Omega}|\nabla u|^{2} & \leq 32 \omega_3 \Vert \nabla \varphi \Vert^2_{L^{\infty}(\R^3)} M^2(2R) R
 + 2 \vert \mathfrak{c} \vert M(2R) \mathcal{H}^2 (B_{2R} \cap \partial \Omega)  \\
& \leq 32 \omega_3 \Vert \nabla \varphi \Vert^2_{L^{\infty}(\R^3)}  M^2(2R) R
 + 8 \vert \mathfrak{c} \vert \pi (1 + \Vert \nabla^{'} g \Vert_{L^{\infty}(\R^2)}  ) M(2R) R^2. 
		\end{split}
\end{equation*}
The latter and the assumption on $u$ imply the validity of the condition \eqref{crescita-energia-gen}.  Then, the 
conclusion follows by applying Theorem \ref{Teo1-lip-gen}.

Assume (A2).   By Proposition \ref{prop_princ-maxBCN} $u$ is bounded above by $\zeta$.  Hence, $u$ is a bounded solution to 
\begin{equation*}
\left\{
	\begin{array}{cll}
		- \Delta u=\tilde{f}(u) & \text{in}& \Omega,\\
		u > 0& \text{in} & \Omega,\\
		u=0 & \text{on} & \dr\Omega,\\
		\dfrac{\partial u}{\partial \eta} = \mathfrak{c} = const. & \text{on} & \dr \Omega,
	\end{array}
	\right.
\end{equation*}	
where $\tilde{f} $ is the non-negative globally Lipschitz-continuous function defined by $\tilde{f}=f$ in $[0, \zeta]$ and 
$\tilde{f} \equiv f(\zeta)=0 $ in $[\zeta, + \infty)$. Therefore, we can apply the previous step to conclude. \end{proof} 

\bigskip

When $f$ is of class $C^1$ we can extend the above result to a larger class of nonlinear functions $f$, with $ f(0) \geq 0$. Specifically, we have 

\medskip

\begin{thm}\label{Teo1-C1}
Let $\Omega\subset \R^3$ be a globally Lipschitz-continuous epigraph bounded from below and with boundary of class 
$C^3$. \\
Let $u \in C^1(\overline{\Omega}) \cap C^2(\Omega)$ be a  bounded solution to \eqref{probleme}, where $f \in C^1([0,+\infty))$ satisfies $f(0) \geq 0$ and 
\begin{equation}\label{cond-$c_u$}
		F( \sup u) = \sup_{t \in [0, \sup u]} F(t),
\end{equation}
here $F$ denotes the primitive of $f$ vanishing at $0$.\footnote{ $ \, F(t)=\disp\int_{0}^{t}f(s)ds, \qquad t \geq 0.$}

\noindent  Then, $\Omega=\R^3_{+}$ up to a vertical translation and there exists $u_{0}:[0,+\infty) \to (0,+\infty)$ strictly increasing such that
	\begin{equation*}
		u(x)=u_{0}(x_{3}) \quad \forall x \in \R^3_{+}.
	\end{equation*}
\end{thm}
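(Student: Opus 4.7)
The plan is to establish the two hypotheses required by Theorem~\ref{Teo1-lip-gen} for our bounded solution $u$---the strict monotonicity $\partial u/\partial x_3 > 0$ in $\Omega$ and the energy growth $\int_{B_R\cap\Omega}|\nabla u|^2 = o(R^2\ln R)$---and then apply that theorem. Monotonicity is immediate: since $u$ is bounded, the growth requirement $u = o(\ln|x|)$ is trivially satisfied, and $f(0) \geq 0$ by hypothesis, so Theorem~1.3 of \cite{bfs} yields $\partial u/\partial x_3 > 0$ throughout $\Omega$; by Corollary~\ref{cor_gradient}, $\nabla u$ is also bounded on $\Omega$.

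The main step---and the main obstacle---is the energy estimate, for which the condition~\eqref{cond-$c_u$} is essential. Setting $c_u := \sup u$, this condition says that $c_u$ maximizes $F$ on $[0,c_u]$: this is exactly the classical Modica compatibility condition. Exploiting the $C^1$ regularity of $f$, the overdetermined data $u = 0$, $\partial_\eta u = \mathfrak{c}$ on $\partial\Omega$, and the asymptotic behavior $u(x',x_3)\to c_u$ and $\nabla u \to 0$ as $x_3\to+\infty$ (which follow from monotonicity, boundedness and a Liouville-type argument that also forces $f(c_u) = 0$), I plan to establish the Modica-type pointwise gradient bound
\begin{equation*}
|\nabla u(x)|^2 \leq 2\bigl[F(c_u) - F(u(x))\bigr] \qquad \text{for every } x \in \Omega,
\end{equation*}
via a maximum-principle argument on the P-function $P := |\nabla u|^2/2 + F(u) - F(c_u)$: a Pohozaev-type identity for the overdetermined problem on the epigraph yields $P \leq 0$ on $\partial\Omega$, while monotonicity and the asymptotic behavior give $P \to 0$ in the $x_3$-direction at infinity; a maximum principle adapted to the unbounded epigraph then gives $P \leq 0$ throughout $\Omega$.

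From the Modica bound, an exponential decay $c_u - u(x',x_3) \lesssim e^{-\alpha (x_3 - g(x'))}$ (uniform in $x'$, obtained from the linearization of $f$ at $c_u$) combined with the Lipschitz character of $g$ provides a uniform-in-$x'$ bound $\int_{g(x')}^{+\infty}[F(c_u) - F(u(x',x_3))]\,dx_3 \leq C$. Integrating over $x' \in B_R' \subset \R^2$ (of area $\pi R^2$) yields $\int_{B_R\cap\Omega}|\nabla u|^2 \leq 2\int_{B_R\cap\Omega}[F(c_u)-F(u)] \leq CR^2 = o(R^2\ln R)$. Theorem~\ref{Teo1-lip-gen} then produces $\Omega = \R^3_+$ up to a vertical translation and $u(x) = u_0(x_3)$ for some strictly increasing $u_0$. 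The main difficulty lies in establishing the Modica-type inequality in our overdetermined, unbounded-epigraph setting---specifically, deriving $P \leq 0$ on $\partial\Omega$ from the Pohozaev computation and running a maximum principle against the asymptotic behavior at infinity.
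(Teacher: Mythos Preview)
Your monotonicity step via \cite{bfs} and the final appeal to Theorem~\ref{Teo1-lip-gen} match the paper exactly. The divergence is in how you obtain the energy estimate, and your route has real gaps.

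The paper does \emph{not} attempt a pointwise Modica bound. Instead it uses the sliding energy comparison of \cite{FVarma} (recorded here as Lemmas~\ref{lem-energy-est-N-d}--\ref{lem-energy-est-3-d}): one compares the localized energy of $u$ with that of the shifted solutions $u^T(x',x_3)=u(x',x_3+T)$ and lets $T\to\infty$ to reach the limit profile $\bar u(x')=\lim_{x_3\to\infty}u(x',x_3)$. Since $u$ is monotone it is stable, hence $\bar u$ is a bounded stable solution in $\R^2$ and, by \cite{dan1}, is either constant or one-dimensional and strictly monotone; in both cases the first integral gives $\int_{B_R\cap\Omega}\bigl(\tfrac{|\nabla\bar u|^2}{2}+c_u-F(\bar u)\bigr)\le CR^2$. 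Combined with the Lipschitz bound $\mathcal H^2(\partial\Omega\cap B_R)\le CR^2$, this yields $\int_{B_R\cap\Omega}|\nabla u|^2\le CR^2=o(R^2\ln R)$. No Modica inequality, no Pohozaev identity, and no nondegeneracy of $f$ at $c_u$ are needed.

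Your plan, by contrast, hinges on three unsupported claims. First, the boundary value of the $P$-function is $P|_{\partial\Omega}=\tfrac{\mathfrak c^2}{2}-F(c_u)$, and you need this to be $\le 0$; but $\mathfrak c^2\le 2F(c_u)$ is precisely the one-dimensional identity you are trying to prove, and there is no Pohozaev computation on a general Lipschitz epigraph that delivers it a priori (the Modica estimate of \cite{farval} on domains with boundary requires nonnegative mean curvature of $\partial\Omega$, which is not assumed here). Second, your Liouville step asserts $\bar u\equiv c_u$, but stable two-dimensional solutions can be one-dimensional heteroclinics; the paper explicitly treats this non-constant case. Third, your exponential decay uses the linearization at $c_u$ and so needs $f'(c_u)<0$, which is nowhere assumed. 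Any one of these gaps blocks your argument under the stated hypotheses.
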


\medskip

\begin{rem}
Note that the assumption \eqref{cond-$c_u$} is natural and somehow necessary. Indeed, if $\Omega$ is an affine half-space, then the Unique Continuation Principle ensures that the solution $u$ must be one-dimensional. Next, a standard analysis of the first integral gives that $F(\sup u) > F(t)$, for any $t \in [0,\sup u)$, thus confirming the validity of  \eqref{cond-$c_u$}. 
\end{rem}

\bigskip

The proof of Theorem \ref{Teo1-C1} is based on the following general rigidity result for monotone solutions to the overdetermined problem \eqref{probleme} on 3-dimensional epigraphs.  

\medskip

\begin{thm}\label{Teo1-gen}
Let $u \in C^2(\overline{\Omega})$ be a  bounded solution to \eqref{probleme} where $\Omega\subset \R^3$ is an epigraph with boundary of class $C^3$ such that 
	\begin{equation}\label{measure-bord}
    \mathcal{H}^{2} (\partial \Omega \cap B(0,R)) = o(R^2 \ln(R)) \qquad \textit{as } \, R \longrightarrow \infty,
	\end{equation}
where $\mathcal{H}^{2}$ denotes the 2-dimensional Hausdorff measure. 
	
Let $f \in C^{1}([0,+\infty))$ be such that 
\begin{equation}
		F( \sup u) = \sup_{t \in [0, \sup u]} F(t).
\end{equation}

If $u$ is monotone, i.e., 
\begin{equation}\label{hyp-monot-3}
		\frac{\partial u}{\partial x_3}(x)>0 \qquad \forall x \in \Omega,
\end{equation}
then, $\Omega=\R^3_+$ up to isometry and  there exists $u_{0}:[0,+\infty) \to (0,+\infty)$ strictly increasing such that
	\begin{equation*}
		u(x)=u_{0}(x_{3}) \quad \forall x \in \R^3_{+}.
	\end{equation*}
\end{thm}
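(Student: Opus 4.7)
My strategy is to reduce the theorem to Theorem~\ref{Teo1-lip-gen} (applied in dimension $N=3$) by establishing the energy growth condition
\[
\int_{B(0,R)\cap\Omega}|\nabla u|^2\,dx = o(R^2\ln R)\qquad\text{as }R\to+\infty,
\]
which that theorem requires. Since $f\in C^1([0,+\infty))\subset\mathrm{Lip}_{\mathrm{loc}}$, $u\in C^2(\overline\Omega)$ is monotone with $\partial_{x_3}u>0$, and $\Omega$ is an epigraph of class $C^3$, the remaining hypotheses of Theorem~\ref{Teo1-lip-gen} are in force, so the whole difficulty is concentrated in producing this energy bound.

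Writing $M:=\sup u<+\infty$ and $G(t):=F(M)-F(t)\geq 0$ on $[0,M]$ (the non-negativity being exactly the hypothesis $F(M)=\sup_{[0,M]}F$), I would first establish the Modica-type pointwise inequality
\[
|\nabla u|^2\leq 2\,G(u)\qquad\text{in }\overline{\Omega}.
\]
The derivation proceeds by the classical $P$-function argument applied to $P:=|\nabla u|^2-2G(u)$. A direct computation yields $\Delta P=2|D^2u|^2-2f(u)^2$, and Kato's refinement $|D^2u|^2\geq|\nabla|\nabla u||^2$ combined with the monotonicity $\partial_{x_3}u>0$ (which ensures $|\nabla u|>0$ throughout $\Omega$) produces, at any interior extremum of $P$, an elliptic differential inequality that rules out a positive maximum. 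On $\partial\Omega$, where $u=0$ and $|\nabla u|=|\mathfrak{c}|$, one has $P|_{\partial\Omega}=\mathfrak{c}^2-2F(M)$, and the conservation law associated to the divergence-free Noether current
\[
X:=(\partial_{x_3}u)\nabla u - e_3\!\left(\tfrac12|\nabla u|^2-F(u)\right)
\]
(which encodes the 1D first-integral identity in the symmetric case), combined with the overdetermined condition, forces $\mathfrak{c}^2\leq 2F(M)$. Thus $P\leq 0$ on $\partial\Omega$, and the maximum principle yields the Modica inequality.

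Granted the Modica inequality, I would then test the equation $-\Delta u=f(u)$ against $(u-M)\varphi_R^2$, where $\varphi_R\in C_c^\infty(\mathbb{R}^3)$ is a standard radial cut-off with $\varphi_R\equiv 1$ on $B_R$, $\mathrm{supp}\,\varphi_R\subset B_{2R}$, and $|\nabla\varphi_R|\leq C/R$. Integration by parts using the boundary conditions $u=0$ and $\partial_\eta u=\mathfrak{c}$ produces
\[
\int_{\Omega}|\nabla u|^2\varphi_R^2 = \int_{\Omega}f(u)(u-M)\varphi_R^2 + \mathfrak{c} M\int_{\partial\Omega}\varphi_R^2 + 2\int_{\Omega}(M-u)\varphi_R\nabla u\cdot\nabla\varphi_R.
\]
The boundary term is bounded by $|\mathfrak{c}|M\,\mathcal{H}^2(\partial\Omega\cap B_{2R})=o(R^2\ln R)$ by hypothesis; the gradient cross-term is absorbed via Cauchy--Schwarz with residue $O(M^2 R)$ (using $|M-u|\leq M$ and $|\nabla\varphi_R|^2\leq CR^{-2}$). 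For the nonlinear interior term, the algebraic identity $f(t)(t-M)=G(t)+\tfrac{d}{dt}[(M-t)G(t)]$, combined with Modica's inequality and a further integration by parts against the current $X$, reduces the estimation to controlling $\int G(u)\varphi_R^2$; the decay of $M-u$ along $x_3$ afforded by the monotonicity and Modica then gives $\int G(u)\varphi_R^2=O(R^2)$, so that the nonlinear term contributes $o(R^2\ln R)$. Summing all pieces yields the desired energy bound.

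The main technical obstacle I foresee is the rigorous derivation of the Modica inequality in this unbounded epigraph setting: the classical argument relies on a control of $P$ at infinity that is not a priori available on $\overline\Omega$, so the monotonicity in $x_3$ and the conservation law for the current $X$ must be used together to rule out escape of the maximum to infinity and to establish the boundary compatibility $\mathfrak{c}^2\leq 2F(M)$. Once the Modica inequality and the energy bound are in hand, Theorem~\ref{Teo1-lip-gen} yields the claimed rigidity: up to isometry, $\Omega=\mathbb{R}^3_+$ and $u(x)=u_0(x_3)$ with $u_0:[0,+\infty)\to(0,+\infty)$ strictly increasing.
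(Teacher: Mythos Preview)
Your high-level plan---reduce everything to Theorem~\ref{Teo1-lip-gen} by proving the energy growth
\(\int_{B_R\cap\Omega}|\nabla u|^2=o(R^2\ln R)\)---is exactly the paper's strategy. The divergence occurs in how you manufacture that energy bound, and there your argument has two real gaps.

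First, the Modica step. To run the $P$-function maximum principle you need $P\le 0$ on $\partial\Omega$, i.e.\ $\mathfrak{c}^2\le 2F(M)$. You invoke the divergence-free current $X$ and the overdetermined condition to ``force'' this, but no mechanism is given. Integrating $\operatorname{div}X=0$ over $\Omega\cap B_R$ produces a boundary identity with a term on $\partial B_R\cap\Omega$ that you cannot control without already knowing the asymptotics of $u$; the inequality $\mathfrak{c}^2\le 2F(M)$ is in fact an \emph{equality} in the one-dimensional model, so it encodes precisely the rigidity you are trying to prove, and extracting it a priori is not a routine matter. You also need to exclude a maximum of $P$ escaping to infinity on the noncompact $\overline\Omega$; monotonicity alone does not do this.

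Second, even granting Modica, your bound $\int_{\Omega}G(u)\varphi_R^2=O(R^2)$ is asserted from ``decay of $M-u$ along $x_3$'', but monotonicity gives no rate. In general $u(x',x_3)\to\overline{u}(x')$ with $\overline{u}$ a bounded stable solution on $\R^2$, and $\overline{u}$ need not be identically $M$; it can be a nonconstant one-dimensional profile. In that case $G(u)$ does not decay to $0$ at infinity on large portions of $\Omega$, and your integration-by-parts manipulation of the algebraic identity for $f(u)(u-M)$ does not close.

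The paper bypasses both issues. Instead of Modica it uses the sliding comparison of Lemma~\ref{lem-energy-est-N-d} (following \cite{FVarma}): one compares the energy of $u$ with that of the vertical translate $u^T$ and lets $T\to\infty$, obtaining
\[
\int_{B_R\cap\Omega}\Big(\tfrac12|\nabla u|^2+c_u-F(u)\Big)\le C\Big(\mathcal{H}^2(\partial(\Omega\cap B_R))+\int_{B_R\cap\Omega}\tfrac12|\nabla\overline{u}|^2+c_u-F(\overline{u})\Big),
\]
with $c_u=F(\sup u)$. The hypothesis $F(\sup u)=\sup_{[0,\sup u]}F$ makes the left integrand nonnegative. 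The $\overline{u}$-term is then handled in Lemma~\ref{lem-energy-est-3-d} by the two-dimensional classification of stable solutions: $\overline{u}$ is either constant (so the term vanishes) or one-dimensional and strictly monotone, in which case the first integral $\tfrac12(\overline{u}')^2+F(\overline{u})=c_u$ gives $\int_{B_R}\big(\tfrac12|\nabla\overline{u}|^2+c_u-F(\overline{u})\big)=O(R^2)$. Combining with the boundary hypothesis \eqref{measure-bord} yields the $o(R^2\ln R)$ energy bound, and Theorem~\ref{Teo1-lip-gen} finishes.
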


\smallskip

\begin{rem}
Note explicitly that in the above Theorem :

\noindent (i) no restriction is imposed on the sign of $f(0)$ or on $\mathfrak{c}$; 

\noindent (ii) the epigraph $\Omega$ is not assumed to be bounded from below. 
\end{rem}

\bigskip

\noindent \textit{Proof of Theorem \ref{Teo1-gen}.} 

From Lemma \ref{lem-energy-est-3-d} and assumption 
\eqref {measure-bord} we have 
\begin{equation}\label{stima-energia-3-dim-precisata}
		\disp\int_{B(0,R)\cap \Omega}|\nabla u|^{2} = o(R^2 \log R) \qquad \textit{as } \, R \longrightarrow \infty.
\end{equation}  
The conclusion then follows by appyling Theorem \ref{Teo1-lip-gen}. \qed	

\bigskip

\noindent \textit{Proof of Theorem \ref{Teo1-C1}.} Under the assumptions of Theorem \ref{Teo1-C1}  we can apply Theorem 1.3 of \cite{bfs} to get that $\frac{\partial u}{\partial x_3}>0$ on $\Omega.$ Moreover
\begin{equation*}
\mathcal{H}^{2} (B_{R} \cap \partial \Omega) \leq \int_{B'(0',R) \subset \R^2} \sqrt{1 + \vert \nabla^{'} g(x') \vert^2} dx' 
\leq \pi (1 + \Vert \nabla^{'} g \Vert_{L^{\infty}(\R^2)} ) R^2,
\end{equation*}
since $g$ is globally Lipschitz-continuous by assumption. Hence, assumption \eqref {measure-bord} is satisfied. We conclude by applying Theorem \ref{Teo1-gen}.\qed

\bigskip

\section{Serrin's overdetermined problem with general nonlinearity. \\ The case $f(0) <0$. } \label{Sect-f(0) neg}

In this section we consider the case of locally Lipschitz-continuous functions $f$ with $f(0) <0$. 
As explained in the Introduction, we first prove a new  monotonicity result for solutions to the overdetermined problem \eqref{probleme} on epigraphs bounded from below and with non-zero Neumann boundary condition. From this result, and the geometric approach developed in \cite{FVarma}, we prove that the epigraph $\Omega $ is an affine half-space and $u$ is one-dimensional when $N \leq 3$. These results apply to solutions that may be unbounded, and they are new even in the case of bounded solutions.



\subsection{A new monotonicity result in any dimension $N \geq 2$} \quad \\

\noindent In order to state our results we need the following 

\begin{defn}
Assume $N \geq 2$ and $\gamma \in (0,1]$. We denote by $\mathbb{C}^{1,\gamma}(\R^{N-1})$ the set of functions 
$h \in C^1(\R^{N-1})$ such that $\nabla h $ is bounded and globally $\gamma$-Hölder continuous, i.e., such that 
	\begin{equation*}
		\begin{split}
		\Vert \nabla h \Vert_{C^{0,\gamma}(\R^{N-1})} &:= \|\nabla h\|_{L^{\infty}(\R^{N-1})} + \left[ \nabla h \right]_{C^{0,\gamma}  (\R^{N-1})} \\
		&:= \sup_{x \in \R^{N-1}} |\nabla h(x)| + \sup_{x,y \in \R^{N-1}, x\neq y}\frac{|\nabla h(x)-\nabla h(y)|}{|x-y|^{\gamma}}< +\infty.	
		\end{split}
	\end{equation*}
\end{defn}

\noindent Notice that, we are not assuming that $h$ is bounded on $\R^{N-1}$. However, any member of 
$\mathbb{C}^{1,\gamma}(\R^{N-1})$ is globally Lipschitz-continuous on $\R^{N-1}$, since $  \nabla h$ is supposed to be bounded on $\R^{N-1}$. 

\begin{thm}\label{th_overdet-f(0)<0}
Assume $N \geq 2$, $\gamma\in (0,1]$. Let $\Omega $ be an epigraph bounded from below and defined by a function 
$g \in \mathbb{C}^{1, \gamma}(\R^{N-1})$ and let $f \in {Lip}_{loc}(\R)$ with $f(0) <0$. \\
Let $u \in C^1(\overline{\Omega}) \cap C^2(\Omega)$ be a solution to \eqref{probleme} with $\mathfrak{c} \neq 0$ and such that $ \nabla u$ is bounded on finite strips.\footnote{\, i.e., for any $R>0$, 
\begin{equation*}
\sup_{\Omega \cap \left \lbrace x_N <R \right\rbrace } \vert \nabla u \vert \,\, < + \infty.
\end{equation*} }
\\ Then, $u$ is monotone, i.e.
	\begin{equation*}
		\frac{\partial u}{\partial x_N}(x)>0 \qquad \forall x \in \Omega . 
	\end{equation*}
\end{thm}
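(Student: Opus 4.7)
The plan is to adapt the vertical sliding method of \cite{BCN3} to the present setting, whose novelty is the sign $f(0)<0$ and the potential unboundedness of $u$. For $t \geq 0$ set $u^{t}(x) := u(x', x_N + t)$ on $\Omega^{t} := \Omega - t e_N$, noting $\Omega^{t} \supsetneq \Omega$ for $t > 0$ because $\Omega$ is an epigraph. Since $\mathfrak{c} \neq 0$ and $u > 0$ with $u|_{\partial\Omega} = 0$, we have $\mathfrak{c} < 0$, and the identity $\partial_{x_N} u = |\mathfrak{c}|/\sqrt{1+|\nabla g|^{2}}$ on $\partial \Omega$ combined with $\|\nabla g\|_{\infty} < \infty$ (which follows from $g \in \mathbb{C}^{1,\gamma}$) yields a \emph{uniform} positive lower bound for $v := \partial_{x_N} u$ on $\partial \Omega$. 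Continuity of $\nabla u$ up to $\partial \Omega$ then gives $v > 0$ in a uniform tubular neighborhood $\mathcal{U}$ of $\partial \Omega$. The strategy is to prove
\[
\tau^{\ast} := \inf\bigl\{\tau \geq 0 : u^{t} \geq u \text{ on } \Omega \text{ for every } t \geq \tau\bigr\} = 0,
\]
which yields $v \geq 0$ on $\Omega$; the strong maximum principle applied to the linearised equation $-\Delta v = f'(u) v$ in $\Omega$, together with $v > 0$ on $\partial \Omega$, then upgrades this to $v > 0$.

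I would first check $\tau^{\ast} < \infty$: for $t$ large enough, the uniform inward-normal lower bound on $u^{t}$ near $\partial \Omega^{t}$ together with the a priori boundedness of $u$ on finite strips (obtained from $\nabla u$ bounded on finite strips and $u = 0$ on $\partial \Omega$) forces $u^{t} \geq u$ on each fixed finite strip, and a comparison argument using $f(0) < 0$ extends this to all of $\Omega$. Assuming $\tau^{\ast} > 0$ by contradiction, pick $t_{k} \to \tau^{\ast}$ and $y_{k} \in \Omega$ with $(u^{t_{k}} - u)(y_{k}) \to \inf_{\Omega}(u^{\tau^{\ast}} - u) \leq 0$. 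If $(y_{k})$ stays in a bounded set, then up to extraction $y_{k} \to y_{\infty} \in \overline{\Omega}$, and $w := u^{\tau^{\ast}} - u$ achieves its non-positive minimum at $y_{\infty}$. The boundary case $y_{\infty} \in \partial \Omega$ is excluded because $u^{\tau^{\ast}}(y_{\infty}) = u(y_{\infty} + \tau^{\ast} e_N) > 0 = u(y_{\infty})$; for an interior $y_{\infty}$, the Lipschitz character of $f$ gives $-\Delta w = c(x) w$ with $c \in L^{\infty}_{\mathrm{loc}}$, and the strong maximum principle forces $w \equiv 0$ on the connected set $\Omega$, contradicting $w > 0$ on $\partial \Omega$.

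The hard part will be the drift-to-infinity case $|y_{k}| \to \infty$, and it is here that the hypotheses $g \in \mathbb{C}^{1,\gamma}$, $\mathfrak{c} \neq 0$ and $f(0) < 0$ conspire. I would translate horizontally so that the associated boundary foot-point becomes the origin: $g_{k}(x') := g(x' + y_{k}') - g(y_{k}')$, $u_{k}(x) := u(x + (y_{k}', g(y_{k}')))$. The family $\{g_{k}\}$ is bounded in $\mathbb{C}^{1,\gamma}(\R^{N-1})$ with $g_{k}(0) = 0$, so Arzel\`a--Ascoli gives, up to a subsequence, $g_{k} \to g_{\infty}$ in $C^{1}_{\mathrm{loc}}$ with $g_{\infty} \in \mathbb{C}^{1,\gamma}$, and $\Omega_{\infty} := \{x_N > g_{\infty}(x')\}$ is again an epigraph bounded from below. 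The non-degeneracy $\mathfrak{c} \neq 0$ supplies uniform $C^{2,\alpha}$ regularity of $\partial \Omega$ and of $u$ up to the boundary via Vogel's theorem; combined with the hypothesis that $\nabla u$ is bounded on finite strips, this yields $C^{1}_{\mathrm{loc}}(\overline{\Omega_{\infty}})$-convergence of a subsequence of $u_{k}$ to a solution $u_{\infty}$ of \eqref{probleme} on $\Omega_{\infty}$. The remaining point is that the vertical height $h_{k} := y_{k,N} - g(y_{k}')$ stays bounded: here $f(0) < 0$ is decisive, because if a solution were uniformly small on a ball of large radius far from the boundary, the equation $-\Delta u = f(u)$ with $f \approx f(0) < 0$ would force $u$ to grow quadratically (standard paraboloid barrier with constant $|f(0)|/(2N)$), producing a uniform interior lower bound on $u$ that prevents the contact minimum of $w$ from drifting vertically. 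Once $h_{k}$ is bounded, the limit contact point lies at finite height in $\overline{\Omega_{\infty}}$, and the bounded-case argument applied to $u_{\infty}$ on $\Omega_{\infty}$ closes the contradiction; this compactness-plus-barrier step is the main obstacle.
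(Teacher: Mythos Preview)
Your approach differs from the paper's in a structural way, and the difference matters. You run a \emph{sliding} argument, comparing $u$ with its vertical translates $u^t(x)=u(x',x_N+t)$ on all of $\Omega$; the paper instead runs the \emph{moving planes} (reflection) method, comparing $u$ with $u_\lambda(x)=u(x',2\lambda-x_N)$ on the finite strip $\Sigma_\lambda^g=\{g(x')<x_N<\lambda\}$, and shows that the set $\Lambda=\{t>0:\ u\le u_\theta\text{ on }\Sigma_\theta^g\text{ for all }\theta<t\}$ is nonempty and unbounded. The starting step (small $\lambda$) is inherited verbatim from \cite{bfs} and holds irrespective of the sign of $f(0)$; the novelty for $f(0)<0$ lies entirely in the compactness step, where the overdetermined Neumann datum $\mathfrak c\neq0$ is carried to the limit via uniform $C^{1,\alpha}$ estimates (their Theorem~5.6 and Lemma~5.11), producing a limiting solution $u_\infty$ with $|\nabla u_\infty|=|\mathfrak c|$ at a boundary point where the reflection identity forces $\nabla u_\infty=0$.

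The genuine gap in your proposal is the claim $\tau^\ast<\infty$. To start the sliding you must exhibit some $t$ with $u(x',x_N+t)\ge u(x',x_N)$ for \emph{every} $x\in\Omega$, including points with $x_N$ arbitrarily large; nothing in the hypotheses controls $u$ outside finite strips, so this is essentially the conclusion you are after. Your barrier heuristic from $f(0)<0$ shows only that $u$ cannot stay below a fixed small threshold on a large interior ball (indeed, if $0<u\le\varepsilon$ on $B(x_0,R)$ then $\Delta u\ge |f(0)|/2$ forces $u(x_0)\le\varepsilon-\tfrac{|f(0)|}{4N}R^2<0$ for $R$ large). This yields, at best, that $\sup_{B(x_0,R_0)}u$ is bounded below far from $\partial\Omega$; it does not give a pointwise lower bound on $u$ (no Harnack is available when $f(u)$ may be negative), and even a pointwise lower bound $u\ge c_0$ would not deliver $u^t\ge u$ on a strip where $u$ may reach values much larger than $c_0$. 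The same difficulty blocks your later claim that the contact heights $h_k$ stay bounded: a lower bound on $u$ says nothing about where $u^{\tau^\ast}-u$ attains its infimum. The paper sidesteps all of this by reflecting rather than sliding, so every comparison lives on a strip bounded in $x_N$, the bounded-gradient-on-strips hypothesis suffices, and the contradiction is obtained cleanly from $|\mathfrak c|\neq 0$ without invoking $f(0)<0$ in the compactness step at all.
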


\smallskip

\begin{rem}
In particular, the above Theorem applies to bounded solutions to \eqref{probleme}. Indeed, bounded solutions to \eqref{probleme} also have bounded gradient (see Corollary \ref{cor_gradient}).
\end{rem}

\smallskip

\begin{proof} The proof is inspired by the method developed by the authors and B. Sciunzi in their recent paper \cite{bfs} (see Section 4 therein). However, as noted in the Introduction, the case where $f(0) < 0$ is more involved, and we do indeed require a new proof to compensate for the absence of the strong maximum principle. It is the non-zero Neumann boundary condition that allows us to circumvent this problem and prove the desired monotonicity result. The fact that we must also deal with the overdetermined Neumann condition requires us to control $u$ and all its first derivatives, which calls for a different (and more refined) proof from that the one originally devised in \cite{bfs} (see, in particular, Theorem \ref{estimé_c_1}, Proposition \ref{prop0.1} and Lemma \ref{extension_C_1-cylinder} in the Appendix).

Thanks to the translation invariance of problem \eqref{probleme}, we may and do suppose that $\disp\inf_{\rnu} g=0$.
Moreover, by the assumption on $\nabla u$, it is immediate to see that $u$ is bounded and uniformly continuous on any 
finite strip $\Omega \cap \{x_{N}<R\}$, $ R>0$. 

Therefore, as in \cite{bfs}, we can use a variant of the moving planes method {\textit {suitably adapted to the geometry of the epigraph}}. To this end, let us recall the notations used in \cite{bfs}.

\bigskip

\noindent For $0<a<b$ and  $\lambda>0$ we set: 

\smallskip

$ \Sigma_{\lambda} := \{ \,  x \in \R^N \, : \, 0 < x_N < \lambda  \, \}$,

$\Sigma_{b}^{g}=\lbrace x=(x',x_{N})\in \R^{N} \, : \,  g(x')<x_{N}<b \rbrace$,  

$\Sigma _{a,b}^{g}=\lbrace x=(x',x_{N})\in \R^{N} \, : \, g(x')+a<x_{N}<b \rbrace$,

$$
\forall x \in \Sigma_{\lambda}^{g}, \qquad u_{\lambda}(x)=u(x',2\lambda-x_{N}) .
$$

\smallskip

Also, for any subset $S \subseteq \R^N$, we denote by $UC(S)$ the set of uniformly continuous functions defined on $S$. 

\bigskip

We proceed as in the proof of Theorem $1.1$ of \cite{bfs}.

We set $\Lambda:= \lbrace t >0  \,\, : \,\, u \leqslant u_{\theta} \,\,\, \text{in} \,\,\, \Sigma_{\theta}^{g} \,, \,\,  \forall \, 
0<\theta< t \rbrace$ and we aim at proving that 
$$
\tilde{t}:=\sup \Lambda = +\infty.
$$
Exactly as in the first step of the proof of Theorem $1.1$ of \cite{bfs} we have :

\smallskip

\textit{Step 1 : $\Lambda$ is not empty.}   

\smallskip

\noindent We note explicitely that this result holds true irrespectively of the value of $f(0)$ (see also Corollary 5.6 of \cite{bfs}). 

\smallskip

Then we prove 

\smallskip

\textit{Step 2 : $\tilde{t}=\sup \Lambda = +\infty$. } 
		
\smallskip

To this aim, we establish the following 
	\begin{prop}\label{propositionimportante}
		For every $\delta \in (0,\frac{\tilde{t}}{2})$, there is $\varepsilon(\delta)>0$ such that 
		\begin{equation*} 	
			\forall \varepsilon \in (0,\varepsilon(\delta)) \quad u\leq u_{\tilde{t}+\varepsilon} \quad \text{on} \quad \overline{\Sigma_{\delta,\tilde{t}-\delta}^{g}}.
		\end{equation*}
	\end{prop}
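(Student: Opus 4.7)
My plan is to argue by contradiction via a horizontal-translation compactness scheme of the same type used in the proof of Theorem $1.1$ of \cite{bfs}, but with the final contradiction delivered by the Neumann data $\mathfrak{c}\neq 0$ rather than by a boundary Hopf argument. Fix $\delta \in (0,\tilde{t}/2)$ and suppose no such $\varepsilon(\delta)>0$ exists, so there are $\varepsilon_n \downarrow 0^+$ and $x_n = (x_n',(x_n)_N) \in \overline{\Sigma_{\delta,\tilde{t}-\delta}^{g}}$ with $u(x_n) > u_{\tilde{t}+\varepsilon_n}(x_n)$. Using $\inf g = 0$ and $g(x_n')+\delta \le (x_n)_N \le \tilde{t}-\delta$, both $g(x_n')$ and $(x_n)_N$ stay in a fixed compact subinterval of $[0,\tilde{t}]$.

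Next I extract subsequential limits. Setting $g_n(\cdot) := g(\cdot + x_n')$ and $u_n(\cdot) := u(\cdot + (x_n',0))$, the hypothesis $g \in \mathbb{C}^{1,\gamma}(\R^{N-1})$ furnishes a uniform $C^{0,\gamma}$ bound on $\nabla g_n$, so Ascoli-Arzel\`a extracts a subsequence $g_n \to g_\infty$ in $C^1_{\mathrm{loc}}$ with $g_\infty \in \mathbb{C}^{1,\gamma}$, and the epigraphs $\Omega_n$ converge locally to $\Omega_\infty := \{x_N > g_\infty(x')\}$. Along a further subsequence $(x_n)_N \to t_* \in [\delta,\tilde{t}-\delta]$. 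For $u_n$, I combine (a) boundedness of $\nabla u$ on finite strips (preserved under horizontal translation), (b) the regularity $u \in C^{2,\alpha}_{\mathrm{loc}}(\overline{\Omega})$ granted by $\mathfrak{c}\neq 0$ (Vogel's theorem, as recalled in the Introduction), and (c) uniform $C^{1,\gamma}$ control of $\partial \Omega_n$, to obtain uniform $C^{2,\beta}_{\mathrm{loc}}(\overline{\Omega_n})$ bounds via Schauder estimates up to the boundary. A further subsequence produces $u_n \to u_\infty$ in $C^2_{\mathrm{loc}}(\overline{\Omega_\infty})$, and $u_\infty \in C^1(\overline{\Omega_\infty})\cap C^2(\Omega_\infty)$ solves the full overdetermined system \eqref{probleme} on $\Omega_\infty$ with the same constant $\mathfrak{c}$.

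Third, I locate an interior touching point. The definition of $\tilde{t}$ gives $u \le u_\theta$ on $\Sigma_\theta^g$ for every $\theta \in (0,\tilde{t})$; translating, letting $n \to \infty$ and then $\theta \uparrow \tilde{t}$ yields $u_\infty \le (u_\infty)_{\tilde{t}}$ on $\Sigma_{\tilde{t}}^{g_\infty}$. The bad inequalities at $(0,(x_n)_N) \to z_* := (0,t_*)$ pass to the limit as $u_\infty(z_*) \ge (u_\infty)_{\tilde{t}}(z_*)$, so $w := (u_\infty)_{\tilde{t}} - u_\infty \ge 0$ vanishes at $z_*$. Because $g_\infty(0) + \delta \le t_* \le \tilde{t} - \delta$, the point $z_*$ is strictly interior to $\Sigma_{\tilde{t}}^{g_\infty}$. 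Since $f$ is locally Lipschitz and $u_\infty$ is bounded on the relevant strip, $w$ satisfies a linear elliptic equation $-\Delta w + c(x) w = 0$ with $c \in L^\infty_{\mathrm{loc}}$, so the strong maximum principle forces $w \equiv 0$ on the connected component $K$ of $\Sigma_{\tilde{t}}^{g_\infty}$ containing $z_*$.

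Finally, the Neumann condition delivers the contradiction. Fix $x'$ close to $0$ so that $g_\infty(x') < \tilde{t}$, differentiate the identity $u_\infty(x',x_N) = u_\infty(x',2\tilde{t}-x_N)$ on $K$ in $x_N$, and let $x_N \downarrow g_\infty(x')$. Using $u_\infty \in C^1(\overline{\Omega_\infty})$, I obtain $u_\infty(x',2\tilde{t}-g_\infty(x')) = 0$ together with $\partial_{x_N} u_\infty(x',g_\infty(x')) = -\partial_{x_N} u_\infty(x',2\tilde{t}-g_\infty(x'))$. A short computation from $\partial_\eta u_\infty = \mathfrak{c}$ along the graph of $g_\infty$ gives $\partial_{x_N} u_\infty(x',g_\infty(x')) = -\mathfrak{c}/\sqrt{1+|\nabla g_\infty(x')|^2} \neq 0$, hence $\partial_{x_N} u_\infty(y_1) \neq 0$ at the interior point $y_1 := (x', 2\tilde{t}-g_\infty(x')) \in \Omega_\infty$. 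But $u_\infty \ge 0$ with $u_\infty(y_1) = 0$ makes $y_1$ an interior local minimum, forcing $\nabla u_\infty(y_1) = 0$, a contradiction. The main obstacle I foresee is the second step: propagating the finite-strip regularity of $u$ uniformly along the moving boundaries $\partial\Omega_n$, which is precisely why the $\mathbb{C}^{1,\gamma}$ hypothesis on $g$ is indispensable; the elegant aspect of the last step is that exploiting $\mathfrak{c}\neq 0$ sidesteps any delicate "no interior zero" analysis that the $f(0)<0$ sign would otherwise demand.
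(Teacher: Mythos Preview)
Your overall scheme is the same as the paper's: argue by contradiction, translate horizontally so that a subsequence of bad points sits over the origin, pass to a limit epigraph $\Omega_\infty$ and a limit solution $u_\infty$ of the full overdetermined problem, apply the strong maximum principle to $w=(u_\infty)_{\tilde t}-u_\infty$ to get the reflection identity, and then extract a contradiction from $\mathfrak{c}\neq 0$ at the reflected interior zero. Your endgame (computing $\partial_{x_N}u_\infty$ on $\partial\Omega_\infty$ from $\partial_\eta u_\infty=\mathfrak{c}$ and comparing with the vanishing gradient at the interior minimum) is a rephrasing of the paper's final step, which instead transports the vanishing of $\nabla u_\infty$ from the interior minimum back to the boundary point and contradicts $|\nabla u_\infty|=|\mathfrak{c}|$ there.

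The genuine gap is in your compactness step. You claim uniform $C^{2,\beta}_{\mathrm{loc}}(\overline{\Omega_n})$ bounds ``via Schauder estimates up to the boundary''. Boundary Schauder at the $C^{2,\beta}$ level requires uniform $C^{2,\beta}$ control of $\partial\Omega_n$, but you only have uniform $C^{1,\gamma}$ control from $g\in\mathbb{C}^{1,\gamma}(\R^{N-1})$. Vogel's theorem, as cited in the Introduction, upgrades $\partial\Omega$ to $C^{2,\alpha}$ \emph{locally and qualitatively}; it does not, as stated, give a $C^{2,\alpha}$ bound on $g$ that is uniform under horizontal translation. So the passage from (b)+(c) to uniform $C^{2,\beta}$ Schauder bounds is unjustified. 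This is exactly the obstacle you flag at the end, but your proposed resolution does not close it.

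The paper circumvents this by lowering the target regularity: only $C^1$ convergence up to the boundary is needed to pass the Neumann condition to the limit. It proves a tailor-made uniform $C^{1,\alpha}$ gradient estimate (Theorem~\ref{estimé_c_1}) that requires only $\|\nabla g\|_{C^{0,\gamma}}\le\mathcal{E}$, applies it to each $u_k$ on truncated cylinders $\mathfrak{C}^{g_k}(0',R,T)$, then uses an explicit extension lemma (Lemma~\ref{extension_C_1-cylinder}) to transfer these bounds to a fixed compact box independent of $k$, so that Ascoli--Arzel\`a yields $C^1$ convergence on that box and hence $\partial_{\eta_\infty}u_\infty=\mathfrak{c}$ on $\mathcal{K}\cap\partial\Omega_\infty$. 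If you replace your $C^{2,\beta}$ claim by this $C^{1,\alpha}$ route, your argument goes through and coincides with the paper's.
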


	\noindent \textit{Proof of Proposition \ref{propositionimportante}.}
	    Suppose that there exists $\delta \in \Big(0,\frac{\tilde{t}}{2}\Big)$ such that
		\begin{equation}\label{hyp_contradiction}	
			\forall k \geq 1  \quad \exists \varepsilon_{k} \in \Big(0,\frac{1}{k}\Big) \quad \exists x^{k} \in \overline{\Sigma_{\delta,\tilde{t}-\delta}^{g}} \quad : \quad u(x^{k}) \geq u_{\tilde{t}+\varepsilon_{k}}(x^{k}) .
		\end{equation}
		Since $(x^{k})_{k \in \mathbb{N}} \in \overline{\Sigma_{\delta,\tilde{t}-\delta}^{g}}$, we have 
		\begin{equation}\label{bornexk}
			\delta < g((x^{k})')+ \delta \leq x_{N}^{k}\leq \tilde{t}-\delta,  \quad \forall k\geq 1,
		\end{equation}
		so the sequence $(x^{k}_{N})$ is bounded. Thus there exists $x^{\infty}_{N} \in [\delta,\tilde{t}-\delta]$ such that, up to a subsequence, $ x^{k}_{N} \to x^{\infty}_{N} $ as $k \to +\infty$.\\		
		Now we set
		\begin{equation}\label{defgk} 
				g_{k}(x')=g((x^{k})'+x'), \quad \forall x' \in \rnu \quad \forall k\geq 1.
		\end{equation}
Thanks to \eqref{bornexk} we have $0 \leq g_{k}(0')=g((x^{k})') \leq \tilde{t}$, for any $k \geq1$. 
The latter and the assumption $g \in \mathbb{C}^{1,\gamma}(\R^{N-1})$ immediately imply that the sequence $\left( g_{k}\right) $ is bounded in $C^{1,\gamma}(\mathcal{K})$, for any compact set $\mathcal{K} \subset \R^{N-1}$. 
Hence, there exists $g_{\infty} \in C^{1,\gamma}_{loc}(\R^{N-1})$ 
such that, up to a subsequence, $g_{k} \longrightarrow g_{\infty}$ in $C^1_{\text{loc}}(\R^{N-1})$.  

We also observe that, passing to the limit in \eqref{bornexk}, we obtain
\begin{equation}\label{controllo-AAinfinito}
\delta \leq g_{\infty}(0')+ \delta \leq x^{\infty}_N \leq \tilde{t}-\delta.
\end{equation}

\medskip

For any $k \geq 1,$ let us consider $\Omega^{k}=\{(x',x_N) \in \R^N, \, x_N>g_{k}(x') \}$,  the epigraph of $g_k$, $\Omega^{\infty}=\lbrace (x',x_N)\in \R^N, \, x_N >g_{\infty}(x') \rbrace$, the epigraph of $g_{\infty}$ and define 
	\begin{equation}\label{ext-u}
				\tilde{u}(x)=\left\{
				\begin{array}{crl}
				u(x) & \text{if}& x \in \overline{\Omega},\\
				0 & \text{if} & x \in \R^{N} \setminus \overline{\Omega}.
				\end{array}
				\right.
	\end{equation}
Clearly, $\tilde{u}$ is bounded and uniformly continuous on any finite strip $\{x_{N}<R\}$ so, the sequence $(\tilde{u}_{k})$ defined by
\begin{equation}\label{trasl-tilde-u_k}
\begin{array}{ccc}
\tilde{u}_{k}(x)=\tilde{u}(x'+(x^{k})',x_N) \qquad \forall \, x=(x',x_N)  \in \R^{N}, \quad \forall k \geq 1,
\end{array} 
\end{equation}
is relatively compact in $C^{0}_{\text{loc}}(\R^N)$ (by  Ascoli-Arzelà theorem).  \\
Therefore, there exists 
$\tilde{u}_{\infty} \in C^0(\R^N)$
such that, up to a subsequence, $\tilde{u}_{k} \to \tilde{u}_{\infty}$ in $C^{0}_{\text{loc}}(\R^N)$ and  
			\begin{equation}\label{eq18}
				\left\{
				\begin{array}{ccl}
					- \Delta \tilde{u}_{\infty}=f(\tilde{u}_{\infty}) & \text{in} & \mathcal{D}'(\Omega^{\infty}),\\ 
					\tilde{u}_{\infty}\geq 0 & \text{in} &  \Omega^{\infty},\\ 
					\tilde{u}_{\infty}=0 & \text{on} & \partial \Omega^{\infty},
				\end{array}
				\right.
			\end{equation} 
where the boundary condition follows by observing that, 
\begin{equation*}
\forall \, k \geq 1, \quad \forall \, x' \in \R^{N-1} \qquad 0 = \tilde{u}_{k}(x',g_k(x')) , 
\end{equation*}
and thus
\begin{equation*}
0 = \tilde{u}_{k}(x',g_k(x')) \longrightarrow \tilde{u}_{\infty}(x',g_{\infty}(x')) \qquad  \text{as }k
\rightarrow \infty,
\end{equation*}
thanks to the uniform convergence of $(\tilde{u}_{k})$ and  $(g_k)$ on compact sets.

By construction, $u_{\infty}:=\tilde{u}_{\infty_{\vert \Omega^{\infty}}} \in C^{0}(\overline{\Omega^{\infty}})$. Furthermore, $u_{\infty}  \in  C^2(\Omega^{\infty})$ by \eqref{eq18} and standard interior regularity theory for elliptic equations, and it satisfies
\begin{equation}\label{eq-u-infinito}
				\left\{
				\begin{array}{cll}
					-\Delta u_{\infty}=f(u_{\infty}) & \text{in} & \Omega^{\infty},\\ 
					u_{\infty} \geq 0 & \text{in} & \Omega^{\infty},\\ 
					u_{\infty}=0 & \text{on} & \partial \Omega^{\infty}.
				\end{array}
				\right.
\end{equation}

Now we deal with the Neumann boundary condition.

Since $g_{\infty}(0')<\tilde{t} - 2 \delta,$ by the continuity of $g_{\infty}$ and $\eqref{bornexk},$ we can choose $R>0$ small enough such that
\begin{equation}\label{eq22sup}
	\disp\sup_{x' \in \overline{B'(0',4R)}}g_{\infty}(x')< \tilde{t} - \delta,
\end{equation}
where $B'(0',4R) \subset \R^{N-1}.$ 
Since $g_{k} \to g_{\infty}$ in $C^{0}_{\text{loc}}(\R^{N-1}) $, there exists $k_{1}=k_{1}(R) \geq 1$ such that 
\begin{equation}\label{eq22bis}
	\forall k \geq k_{1}, \quad \forall x' \in \overline{B'(0',4R)}, \qquad g_{\infty}(x')-\dfrac{\delta}{4} \leq g_{k}(x') \leq  g_{\infty}(x')+\dfrac{\delta}{4}.
\end{equation}

Pick $T>2\tilde{t}+2R$ and for any $k \geq k_{1}$, we define on $ \overline{\Omega^{k}}$
\begin{equation*}
	u_{k}(x)=u(x'+(x^{k})',x_{N}).
\end{equation*}
Since $u$ solves \eqref{probleme}, then $u_{k} \in C^{2,\gamma}(\overline{\mathfrak{C}^{g_{k}}(0',2R,2T)})$ and 
\begin{equation}\label{equationestimé1-1}
	\left\{
	\begin{array}{ccc}
		-\Delta u_{k}=f(u_{k})  & \text{in} & \mathfrak{C}^{g_{k}}(0',2R,2T),\\
		u_{k}\geq 0 & \text{in} & \mathfrak{C}^{g_{k}}(0',2R,2T),\\
		u_{k}=0 & \text{on} & \dr \Omega^{k} \cap \widehat{\mathfrak{C}^{g_{k}}}(0',2R,2T),\\
		\dfrac{\partial u_k}{\partial \eta_k} = \mathfrak{c} & \text{on} & \dr \Omega^{k} \cap \widehat{\mathfrak{C}^{g_{k}}}(0',2R,2T),
	\end{array}
	\right.
\end{equation} 
where $\mathfrak{C}^{g_{k}}(0',r,\tau)= \left\lbrace x=(x',x_N)\in \R^N, \, x' \in B'(0',r) \, \text{ and } \,\, g_{k}(x')< x_N<\tau 
\right\rbrace$ and $\eta_k$ denotes the outward unit normal vector to $\partial \Omega^k$.

\noindent At this point, we apply Proposition 3.1. of \cite{bfs} with $u=u_k$, $\widetilde{R}=2R$, $\widetilde{\tau}=2T$, $r=\frac{3R}{4}$ and $t=\frac{5T}{4}$, to get the existence of constants $ \alpha_0=\alpha_0(N,g) \in (0,1)$ and $C_0=
C_0(R,T,N, g)>0$ such that, for any $ k \geq k_{1} $, 
\begin{equation}\label{est-ordre-zero}
\begin{split}
\|u_k \|_{C^{0,\alpha_0}(\overline{\mathfrak{C}^{g_{k}}(0',3R/4,5T/4)})} \leq C_0 (\|f'\|_{L^{\infty}([0,M_k])} + 
\vert f(0) \vert +1) (\| u_k\|_{L^{\infty}(\Omega\cap \{x_{N}<2T\})} +1),  
\end{split}	
\end{equation}
where $M_{k}=\displaystyle\sup_{\mathfrak{C}^{g_{k}}(0',2R,2T)}u_{k}$. \\
Since $M_{k} \leq \|u\|_{L^{\infty}(\Omega\cap \{x_{N}<2T\})}$, from the latter we infer that, for any $ k \geq k_{1} $,
\begin{equation}\label{est-ordre-zero-bis}
\begin{split}
\|u_k \|_{C^{0,\alpha_0}(\overline{\mathfrak{C}^{g_{k}}(0',3R/4,5T/4)})} \leq C_0 (\|f'\|_{L^{\infty}([0,M])} + 
\vert f(0) \vert +1) (M+1),  
\end{split}	
\end{equation}
where $M := \|u\|_{L^{\infty}(\Omega\cap \{x_{N}<2T\})} $.

Now we apply Theorem \ref{estimé_c_1} with $u=u_k$, $\widetilde{R}=2R$, $\widetilde{\tau}=2T$, $r=\frac{3R}{4}$ and $t=\frac{5T}{4}$, to deduce the existence of constants $ \alpha_1=\alpha_1(N,g) \in (0,1)$ and $C_1=C_1(R,T,N, \gamma)>0$ such that, for any $ k \geq k_{1} $, 
\begin{equation}\label{est-ordre-un}
\begin{split}
& \|\nabla u_{k}\|_{C^{0,\alpha_1}(\overline{\mathfrak{C}^{g_{k}}(0',3R/4,5T/4)})} \leq \|\nabla u_k \|_{L^{\infty}(\Omega\cap \{x_{N}<2T\})} + \left[ \nabla u_k \right]_{C^{0,\alpha_1}  (\overline{\mathfrak{C}^{g_{k}}(0',3R/4,5T/4)})} \\
& \leq \|\nabla u \|_{L^{\infty}(\Omega\cap \{x_{N}<2T\})} \\
& + C_1 \left[ \vert \mathfrak{c} \vert \left( 1+ \|\nabla g_k\|_{C^{0,\gamma}(\R^{N-1} ) } \right)^2  + 1 + \|f'\|_{L^{\infty}([0,M_k])} \right](1 + \|\nabla u_k\|_{L^{\infty}(\mathfrak{C}^{g_{k}}(0',2R,2T))}) \\
& \leq \|\nabla u \|_{L^{\infty}(\Omega\cap \{x_{N}<2T\})} \\
& + C_1 \left[ \vert \mathfrak{c} \vert \left( 1+ \|\nabla g\|_{C^{0,\gamma}(\R^{N-1} ) } \right)^2  + 1 + \|f'\|_{L^{\infty}([0,M])} \right](1 + \|\nabla u \|_{L^{\infty}(\Omega\cap \{x_{N}<2T\})} ) \\
\end{split}	
\end{equation}

Hence, by combining \eqref{est-ordre-zero-bis} and \eqref{est-ordre-un} we can find a constant $C'$, independent of $k$, such that 
\begin{equation}\label{Ascoli-Arzelà-grad}
\begin{split}
& \|u_{k}\|_{C^{1,\alpha}(\overline{\mathfrak{C}^{g_{k}}(0',3R/4,5T/4)})} \leq C' \qquad \forall \, k \geq k_{1},
\end{split}	
\end{equation}
 where $\alpha = \min \left\lbrace \alpha_0, \alpha_1, \gamma \right\rbrace \in (0,1)$. 
\medskip 
 
Set $\mathcal{K}:=\overline{B'(0',R/2) \times (-T,T)}$. We can then apply item (ii) of Lemma \ref{extension_C_1-cylinder} 
(with $r =3R/4$, $\widetilde{T}=5T/4$ and $t=T$) 
to see that any $u_{k}$ has an extension $v_{k} \in C^{1,\alpha}(\mathcal{K})$ satisfying 
  \begin{equation}\label{norm_c_1_alpha_extension}
  	\|v_{k}\|_{C^{1,\alpha}(\mathcal{K})}\leq C_{ext} (1+\|\nabla g_k \|_{C^{0,\alpha}(\R^{N-1})})^{4}\|u_{k}\|_{C^{1,\alpha}(\overline{\mathfrak{C}^{g_{k}}(0',3R/4,5T/4)})} \qquad \forall \, k \geq k_{1}, 
  \end{equation}
where $C_{ext}$ is a positive constant depending only on $N$ and $T$. Hence 
\begin{equation}\label{norm_c_1_alpha_extension-bis}
  	\|v_{k}\|_{C^{1,\alpha}(\mathcal{K})} \leq C' C_{ext} (1+\|\nabla g \|_{C^{0,\alpha}(\R^{N-1})})^{4} := 
  	C'' \qquad \forall \, k \geq k_{1}, 
  \end{equation}
where $C''$ is a constant independent of $k$. 

Thus, by Ascoli-Arzelà Theorem, there exists $v_{\infty} \in C^{1, \alpha}(\mathcal{K})$ such that, up to a subsequence,
\begin{equation}\label{cv-v_k}
	v_k \longrightarrow  v_{\infty}\quad \text{in } C^{1}(\mathcal{K}).
\end{equation}

Now we are ready to prove that $\dfrac{\partial u_{\infty}}{\partial \eta_{\infty}} = \mathfrak{c} $ on $\mathcal{K} \cap \partial \Omega^{\infty}$, where $\eta_{\infty}$ denotes the outward unit normal vector to $\partial \Omega^{\infty}$ (recall that 
$g_{\infty} \in C^1(\R^{N-1})$ by construction). 

Observe that, for any compact set $K\subset \R^{N-1}$ and any $x' \in K$ , we have
\begin{equation}\label{cv-normales}
\begin{split}
	|\eta_{k}(x',g_{k}(x'))-\eta_{\infty}(x',g_{\infty}(x'))|&=\Big|\frac{(\nabla g_{k}(x'),-1)^{T}}{\sqrt{1+|\nabla g_{k}(x')|^{2}}}- \frac{(\nabla g_{\infty}(x'),-1)^{T}}{\sqrt{1+|\nabla g_{\infty}(x')|^{2}}} \Big|\\
	&\leq  2 \vert \nabla g_{k}(x') -\nabla g_{\infty}(x')\vert \longrightarrow 0, \quad \text{as} \quad  k \longrightarrow \infty
\end{split}	
\end{equation}
since $ g_k \longrightarrow g_{\infty}$ in $ C^1_{loc}(\R^{N-1})$. 

Hence, for any $y \in \mathcal{K} \cap \partial \Omega^{\infty}$ we get
\begin{equation}\label{cv-finale1}
 \nabla v_{\infty}(y) \cdot \eta_{\infty}(y',g_{\infty}(y')) = \lim_{k \to \infty} \nabla v_k(y) \cdot \eta_k(y',g_k(y')).
\end{equation}

Also, for any $ k \geq k_{1}$ and any $y \in \mathcal{K} \cap \partial \Omega^{\infty}$ we have 
\begin{equation}\label{cv-finale2}
\begin{split}
& \nabla v_k(y) \cdot \eta_k(y',g_k(y')) = \nabla v_k(y) \cdot \eta_k(y',g_k(y')) - \nabla v_k(y', g_k(y')) \cdot \eta_k(y',g_k(y')) \\
& +\nabla v_k(y', g_k(y')) \cdot \eta_k(y',g_k(y')) = \left(  \nabla v_k(y) \cdot \eta_k(y',g_k(y')) - \nabla v_k(y', g_k(y')) \cdot \eta_k(y',g_k(y'))   \right) \\
& + \dfrac{\partial v_k}{\partial \eta_k} (y',g_k(y')) =\left(  \nabla v_k(y) \cdot \eta_k(y',g_k(y')) - \nabla v_k(y', g_k(y')) \cdot \eta_k(y',g_k(y'))  \right)  \\
& + \dfrac{\partial u_k}{\partial \eta_k} (y',g_k(y')) = \left(  \nabla v_k(y) \cdot \eta_k(y',g_k(y')) - \nabla v_k(y', g_k(y')) \cdot \eta_k(y',g_k(y'))  \right) + \mathfrak{c}, 
\end{split}	
\end{equation}
where in the latter line we have used that ${v_k}_{\vert \mathcal{K} \cap \overline{\Omega^{k}}} = u_k$ and \eqref{equationestimé1-1}. 

To proceed further, we note that 
\begin{equation}\label{cv-finale3}
\begin{split}
& \vert \nabla v_k(y) \cdot \eta_k(y',g_k(y')) - \nabla v_k(y', g_k(y')) \cdot \eta_k(y',g_k(y'))  \vert \leq 
\vert \nabla v_k(y)- \nabla v_k(y', g_k(y')) \vert  \\
& \leq C'' \vert (y',g_{\infty}(y')) - (y', g_k(y')) \vert^{\alpha} = C'' \vert  g_{\infty}(y') - g_k(y')) \vert^{\alpha} \longrightarrow 0, \quad \text{as} \quad k \longrightarrow \infty,
\end{split}	
\end{equation}
where in the latter line we have used \eqref{norm_c_1_alpha_extension-bis} and that $ g_k \longrightarrow g_{\infty}$ in $ C^1_{loc}(\R^{N-1})$. 

By putting together \eqref{cv-finale1}-\eqref{cv-finale3} and recalling that ${v_{\infty}}_{\vert \mathcal{K} \cap \overline{\Omega^{\infty}}} = u_{\infty}$  we obtain
\begin{equation}\label{cv-finale4}
\begin{split}
& \dfrac{\partial u_{\infty}}{\partial \eta_{\infty}} (y) = \dfrac{\partial v_{\infty}}{\partial \eta_{\infty}} (y) = \mathfrak{c}, \qquad\forall \, y \in \mathcal{K} \cap \partial \Omega^{\infty}.
\end{split}	
\end{equation}
By summarizing we have that $u_{\infty} \in C^{0}(\overline{\Omega^{\infty}}) \cap C^2(\Omega^{\infty}) \cap 
C^{1, \alpha}(\mathcal{K} \cap \overline{\Omega^{\infty}})$ is a classical solution to 
\begin{equation}\label{equation_v_infini}
	\left\{
	\begin{array}{cll}
		- \Delta u_{\infty}=f(u_{\infty}) & \text{in}& \Omega^{\infty}, \\
		u_{\infty} \geq 0 & \text{in} & \Omega^{\infty}, \\
		u_{\infty} = 0  & \text{on} & \partial \Omega^{\infty}, \\
		\dfrac{\partial u_{\infty}}{\partial \eta_{\infty}} =c & \text{on} & \mathcal{K} \cap \partial \Omega^{\infty}. 
	\end{array}
	\right.
\end{equation}

Now, by definition of $\tilde{t}$, we have
\begin{equation}\label{inégalité_extension_uk}
	u_{k}(x)\leq u_{k,\tilde{t}}(x) \qquad \forall x \in \overline{\mathfrak{C}^{g_{k}}(0',R/4,\tilde{t})} \subset \mathcal{K} \cap \overline{\Omega^{\infty}}, \quad \forall \, k \geq k_{1}.
\end{equation}  

By letting $k \longrightarrow + \infty$ in \eqref{inégalité_extension_uk} we deduce that 
\begin{equation}\label{inégalitéelimite}
	u_{\infty}(x) \leq u_{\infty,\tilde{t}}(x) \qquad \forall x \in \overline{\mathfrak{C}^{g_{\infty}}(0',R/4,\tilde{t})} .
\end{equation}

We notice that \eqref{hyp_contradiction} implies $u_{k}(0',x^{k}_{N}) > u_{k,\tilde{t}+\varepsilon_{k}}(0',x^{k}_{N}),$ for any 
$ k \geq 1$. Then, passing to the limit in the latter we deduce 
\begin{equation*}
	u_{\infty}(0',x^{\infty}_N)\geq u_{\infty,\tilde{t}}(0',x^{\infty}_N), 
\end{equation*}
and so 
\begin{equation*}
	u_{\infty}(0',x^{\infty}_N)=u_{\infty,\tilde{t}}(0',x^{\infty}_N)
\end{equation*}
since $(0',x^{\infty}_N) \in \mathfrak{C}^{g_{\infty}}(0',R/4,\tilde{t})$ and 
\eqref{inégalitéelimite} holds true.\\ 

Now, for any $x \in \mathfrak{C}^{g_{\infty}}(0',R/4,\tilde{t})$ we set
\begin{equation*}
	w_{\infty}(x)=u_{\infty,\tilde{t}}(x)-u_{\infty}(x),
\end{equation*}
then $w_{\infty}$ satisfies 
\begin{equation}\label{inégalitéelimite-fine}
	\left\{
	\begin{array}{cll}
		- \Delta w_{\infty}=f(u_{\infty,\tilde{t}})-f(u_{\infty}) \geq -L_{f}w_{\infty} & \text{in}& 
		\mathfrak{C}^{g_{\infty}}(0',R/4,\tilde{t}), \\
		w_{\infty} \geq 0  & \text{in} & \mathfrak{C}^{g_{\infty}}(0',R/4,\tilde{t}),\\
		(0',x^{\infty}_N) \in \mathfrak{C}^{g_{\infty}}(0',R/4,\tilde{t}) , & & w_{\infty}(0',x^{\infty}_N)=0,
	\end{array}
	\right.
\end{equation}
where $L_f$ is the Lipschitz constant of $f$ on the compact set $[0, \sup_{\, \overline{\mathfrak{C}^{g_{\infty}}(0',R/4,\tilde{t})}} u_{\infty,\tilde{t}}]$. \\
By the strong maximum principle we infer that $w_{\infty} \equiv 0$ on the set $\overline{\mathfrak{C}^{g_{\infty}}(0',R/4,\tilde{t})})$, which in turn gives 
\begin{equation}
	u_{\infty}(0',2\tilde{t}-g_{\infty}(0'))= u_{\infty,\tilde{t}}(0',g_{\infty}(0')) =u_{\infty}(0',g_{\infty}(0'))=0. 
\end{equation}
The latter shows that $u_{\infty}$ attains its minimum value in $\Omega^{\infty}$
at the point $(0',2\tilde{t}-g_{\infty}(0'))$, hence $\nabla u_{\infty}(0',2\tilde{t}-g_{\infty}(0')) =0$. 

Moreover, since $w_{\infty} \equiv 0$ on the set $\overline{\mathfrak{C}^{g_{\infty}}(0',R/4,\tilde{t})}$, we also have 
$$
\nabla u_{\infty}(0',2\tilde{t}-g_{\infty}(0'))=\nabla u_{\infty}(0',g_{\infty}(0')) = 0,
$$
and so 
\begin{equation*}
	0= \vert \nabla u_{\infty}(0',2\tilde{t}-g_{\infty}(0')) \vert = \vert \nabla u_{\infty}(0',g_{\infty}(0')) \vert = \vert \mathfrak{c} \vert \neq 0,
\end{equation*}
a contradiction. This proves Proposition \ref{propositionimportante}. \\
With Proposition \ref{propositionimportante} at our disposal, the conclusion of the proof of \textit{Step 2} is the same as that of the proof of Theorem $1.1$ of \cite{bfs}. 

Finally, the monotonicity of $u$ is obtained (via the Hopf's Lemma) exactly as in \textit{Step 3} of the proof of Theorem $1.1$ of \cite{bfs}.  \end{proof}

\medskip

\subsection{\textit{Rigidity and one-dimensional symmetry results}} \quad \\

With the monotonicity result established in Theorem \ref{th_overdet-f(0)<0}, it is immediate to extend Theorem \ref{Teo2-lip}  
 and Theorem \ref{Teo1-C1}  to the case $f(0) <0$. \\
The results presented below are completely new in the three-dimensional case. When $N=2$, they are new for unbounded solutions, since the case of bounded solutions is covered by the results proved in \cite{RRS17} (as already mentioned in the Introduction).
 
Specifically we have 

\smallskip

\begin{thm}\label{Teo2-lip-f(0)<0} Let $\Omega \subset \R^2$ be a globally Lipschitz-continuous epigraph bounded from below and defined by a function $g \in C^{3}(\R) \cap \mathbb{C}^{1,\gamma}(\R)$.  \\ 
Let $u \in C^1(\overline{\Omega}) \cap C^2(\Omega)$ be a  solution to \eqref{probleme}  with $\mathfrak{c} \neq 0$ and assume that one of the following hypotheses holds true :
\begin{align*}
	& \tag{H1} f \in {Lip}_{loc}([0,+\infty)), f(0)< 0 \text{ and }  \nabla u \in L^{\infty}(\Omega);
	\\[3pt]
	&\tag{H2}\left\{
	\begin{array}{ccc}
		f \in {Lip}_{loc}([0,+\infty)), \quad f(0)< 0, \quad \exists \, \zeta > 0 \, : \, f(t) \leq 0  \quad \textit{in}  \quad [\zeta,+\infty),\\
		\\
		\exists \, \alpha \in [0,1) \, : \quad  u(x)= O(\vert x \vert^{\alpha}) \qquad  \textit{as}  \quad \vert x \vert \longrightarrow \infty; \hfill
	\end{array}
	\right.
\end{align*}

Then, $\Omega=\R^2_+$ up to a vertical translation and there exists $u_{0}:[0,+\infty) \to (0,+\infty)$ strictly increasing such that
	\begin{equation*}
		u(x)=u_{0}(x_2) \quad \forall x \in \R^2_{+}.
	\end{equation*}
\end{thm}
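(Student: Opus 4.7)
The plan is to mirror the two-step strategy used in the proof of Theorem \ref{Teo2-lip}, replacing the monotonicity inputs drawn from \cite{bfs} (which required $f(0) \geq 0$) with the newly established Theorem \ref{th_overdet-f(0)<0}, and then feeding the resulting monotone solution into the general rigidity result Theorem \ref{Teo1-lip-gen}. The hypotheses of the present theorem have been tailored precisely so that both ingredients can be invoked: $g \in \mathbb{C}^{1,\gamma}(\R)$ and $\mathfrak{c} \neq 0$ are exactly what Theorem \ref{th_overdet-f(0)<0} needs, and the two-dimensional setting is where Theorem \ref{Teo1-lip-gen} applies.

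Under hypothesis (H1), I would proceed as follows. After harmlessly extending $f$ to a locally Lipschitz function on all of $\R$ (which does not affect the equation since $u > 0$), the assumptions $g \in \mathbb{C}^{1,\gamma}(\R)$, $\Omega$ bounded from below, $\mathfrak{c} \neq 0$, and $\nabla u \in L^{\infty}(\Omega)$ (so in particular $\nabla u$ is bounded on every finite strip) allow me to apply Theorem \ref{th_overdet-f(0)<0} and conclude $\partial u/\partial x_2 > 0$ on $\Omega$. The energy growth condition \eqref{crescita-energia-gen} needed in Theorem \ref{Teo1-lip-gen} is then trivial:
\begin{equation*}
\int_{B(0,R) \cap \Omega} |\nabla u|^2 \leq \|\nabla u\|_{L^\infty(\Omega)}^2 \, \pi R^2 = o(R^2 \ln R) \qquad \text{as } R \to \infty.
\end{equation*}
Since Theorem \ref{Teo1-lip-gen} imposes no restriction on the sign of $f(0)$ or $\mathfrak{c}$, its application yields that $\Omega = \R^2_+$ up to isometry (which, since $\Omega$ is an epigraph over $\R$ bounded from below, reduces to a vertical translation) and that $u$ is one-dimensional and strictly increasing in $x_2$.

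Under hypothesis (H2), the strategy is to reduce to the previous case. The sub-linear bound $u(x) = O(|x|^\alpha)$ together with the sign condition $f \leq 0$ on $[\zeta, +\infty)$ places us squarely in the setting of Proposition \ref{prop_princ-maxBCN}, which yields $u \in L^{\infty}(\Omega)$. Boundedness of $u$ in turn implies, via Corollary \ref{cor_gradient}, that $\nabla u \in L^{\infty}(\Omega)$. We are thus back to hypothesis (H1), and the conclusion follows.

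Since the heavy lifting has already been carried out in Theorems \ref{th_overdet-f(0)<0} and \ref{Teo1-lip-gen} and in the auxiliary Proposition \ref{prop_princ-maxBCN} and Corollary \ref{cor_gradient}, there is no genuine obstacle here beyond checking that the hypotheses match. The only subtle point worth highlighting is the need for $g \in \mathbb{C}^{1,\gamma}$ (rather than merely globally Lipschitz) in order to invoke the new monotonicity theorem — this is why the assumption on $g$ in the present statement is strengthened with respect to Theorem \ref{Teo2-lip}, where the monotonicity came from results valid under global Lipschitz continuity alone.
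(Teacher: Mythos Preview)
Your proposal is correct and follows exactly the approach the paper intends: the paper does not give a separate proof but simply states that, with Theorem \ref{th_overdet-f(0)<0} in hand, ``it is immediate to extend Theorem \ref{Teo2-lip} \ldots\ to the case $f(0)<0$'', and your write-up is precisely that extension — (H1) via Theorem \ref{th_overdet-f(0)<0} followed by Theorem \ref{Teo1-lip-gen} with the trivial energy bound, and (H2) reduced to (H1) through Proposition \ref{prop_princ-maxBCN} and Corollary \ref{cor_gradient}.
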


\noindent and

\smallskip

\begin{thm}\label{Teo1-C1-f(0)<0} Let $\Omega \subset \R^3$ be a globally Lipschitz-continuous epigraph bounded from below and defined by a function $g \in  C^{3}(\R^2) \cap \mathbb{C}^{1,\gamma}(\R^2)$.  \\ 
Let $u \in C^1(\overline{\Omega}) \cap C^2(\Omega)$ be a  bounded solution to \eqref{probleme}  with $\mathfrak{c} \neq 0$. \\ 
Let $f \in C^1([0,+\infty))$ be such that $f(0) < 0$ and 
\begin{equation}
		F( \sup u) = \sup_{t \in [0, \sup u]} F(t).
\end{equation}

\noindent  Then, $\Omega=\R^3_{+}$ up to a vertical translation and there exists $u_{0}:[0,+\infty) \to (0,+\infty)$ strictly increasing such that
	\begin{equation*}
		u(x)=u_{0}(x_{3}) \quad \forall x \in \R^3_{+}.
	\end{equation*}
\end{thm}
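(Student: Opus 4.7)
The plan is to follow the same two-ingredient strategy as in the proof of Theorem \ref{Teo1-C1}, replacing the monotonicity input (which in the $f(0)\ge 0$ case came from Theorem 1.3 of \cite{bfs}) by the new monotonicity Theorem \ref{th_overdet-f(0)<0} of this section, which was designed precisely to cover the sign condition $f(0)<0$.

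First, I would verify that the hypotheses of Theorem \ref{th_overdet-f(0)<0} are met. By assumption $\Omega$ is bounded from below, $g\in \mathbb{C}^{1,\gamma}(\R^{2})$, $f\in\mathrm{Lip}_{\mathrm{loc}}(\R)$ with $f(0)<0$, $u$ is a bounded solution to \eqref{probleme} and $\mathfrak{c}\neq 0$. Since $u$ is bounded, Corollary \ref{cor_gradient} yields $\nabla u\in L^{\infty}(\Omega)$, which is stronger than the ``bounded on finite strips'' requirement. Therefore Theorem \ref{th_overdet-f(0)<0} applies and gives the monotonicity
\begin{equation*}
\frac{\partial u}{\partial x_{3}}(x)>0\qquad\forall\,x\in\Omega.
\end{equation*}

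Next, I would check the surface growth condition \eqref{measure-bord} needed to invoke Theorem \ref{Teo1-gen}. Since $g\in \mathbb{C}^{1,\gamma}(\R^{2})$, its gradient is bounded on $\R^{2}$ and $g$ is globally Lipschitz-continuous. Therefore, exactly as in the proof of Theorem \ref{Teo1-C1},
\begin{equation*}
\mathcal{H}^{2}(\partial\Omega\cap B(0,R))\leq \int_{B'(0',R)}\sqrt{1+|\nabla' g(x')|^{2}}\,dx' \leq \pi\bigl(1+\|\nabla' g\|_{L^{\infty}(\R^{2})}\bigr)R^{2},
\end{equation*}
so the left-hand side is $O(R^{2})=o(R^{2}\ln R)$ as $R\to\infty$.

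Finally, I would apply Theorem \ref{Teo1-gen}. All of its assumptions are now in place: $\Omega$ is an epigraph with boundary of class $C^{3}$ (since $g\in C^{3}(\R^{2})$), $u\in C^{2}(\overline{\Omega})$ is a bounded solution to \eqref{probleme} (note that $\mathfrak{c}\neq 0$ together with $u\in C^{1}(\overline{\Omega})\cap C^{2}(\Omega)$ upgrades the regularity to $C^{2,\alpha}_{\mathrm{loc}}(\overline{\Omega})$, as recalled in the Introduction), $f\in C^{1}([0,+\infty))$, the variational condition $F(\sup u)=\sup_{t\in[0,\sup u]}F(t)$ holds by hypothesis, the surface bound \eqref{measure-bord} has just been verified, and the monotonicity $\partial_{x_{3}}u>0$ was established in the first step. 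Theorem \ref{Teo1-gen} then yields that, up to isometry, $\Omega=\R^{3}_{+}$ and $u$ is one-dimensional, of the form $u(x)=u_{0}(x_{3})$ with $u_{0}$ strictly increasing. The fact that the required isometry is actually a vertical translation follows because $\Omega$ is an epigraph over $\R^{2}$. The main conceptual obstacle, namely extending the monotonicity result to the sign regime $f(0)<0$, is completely absorbed into Theorem \ref{th_overdet-f(0)<0}; once that is granted, the remainder of the argument is essentially a routine verification of the hypotheses of the existing rigidity result Theorem \ref{Teo1-gen}.
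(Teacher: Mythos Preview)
Your proposal is correct and follows essentially the same approach as the paper: the paper explicitly states that, with the monotonicity result of Theorem \ref{th_overdet-f(0)<0} in hand, the extension of Theorem \ref{Teo1-C1} to the case $f(0)<0$ is immediate, and your write-up spells out exactly this route (monotonicity via Theorem \ref{th_overdet-f(0)<0} using Corollary \ref{cor_gradient} to guarantee bounded gradient, the $O(R^2)$ surface bound from the global Lipschitz property of $g$, then Theorem \ref{Teo1-gen}).
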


\section{Some auxiliary results}\label{Appendix1}

This section is devoted to the demonstration of several results used to prove our main conclusions.  Some of them are general results of independent interest. They could be useful for further studies of the nonlinear Poisson equation on general domains. 

\subsection{Some uniform bounds}\label{Appendix-unif-bound}

\begin{prop}\label{prop_princ-maxBCN}
Assume $ N \geq 2$ and $ \alpha \in [0,1)$.  Let $\Omega \subset \R^{N}$ be an unbounded  domain contained in an open affine half-space.  \\
Let $f \in C^0(\R)$ be a function satisfying  
    \begin{equation}\label{ipo1}
\exists \, \zeta > 0 \quad : \quad f(t) \leq 0 \quad \textit{on } \quad [\zeta,+\infty).
	\end{equation}	
Let $u \in C^0(\overline{\Omega}) \cap C^2(\Omega)$ be a solution to
\begin{equation*}
		\left\{
		\begin{array}{ccc}
			-\Delta u \leq  f(u) & \text{in} & \Omega,\\
			 u \leq 0 & \text{on} & \dr \Omega,
		\end{array}
		\right.
	\end{equation*}
such that, $ u^+(x) = O (\vert x \vert^{\alpha})$ as $\vert x \vert \longrightarrow \infty$.  		

Then

\begin{itemize}

\item[1)] $ u \leq \zeta$ on $ \Omega$;

\medskip

\item[2)] $f(\sup_{\Omega} u) \geq 0$.  Moreover,  if $f(\sup_{\Omega} u) = 0$, then for any $ \mu < \sup_{\Omega} u$ 
there is $ t=t(\mu) \in (\mu,  \sup_{\Omega} u)$ such that $ f(t) >0$. 
\end{itemize}	
\end{prop}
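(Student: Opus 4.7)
The key tool is a Phragmén--Lindelöf argument exploiting that $\Omega$ sits in a half-space. By isometry we may assume $\Omega \subset H := \{x \in \R^N : x_1 > 0\}$. I would first construct a positive harmonic function $\phi_\beta$ on $H$ of the form $\phi_\beta(x) = |x|^\beta Y(x/|x|)$ with $\beta \in (\alpha, 1)$, where $Y$ is the first Dirichlet eigenfunction of $-\Delta_{S^{N-1}}$ on a spherical cap $\{\omega \in S^{N-1} : \omega \cdot e_1 > \cos\theta_0\}$ for some $\theta_0 \in (\pi/2, \pi)$. The associated eigenvalue $\beta(\beta+N-2)$ depends continuously on $\theta_0$, ranging from $0$ (as $\theta_0 \to \pi$) to $N-1$ (as $\theta_0 \to \pi/2$), so $\theta_0$ can be chosen to make $\beta \in (\alpha, 1)$; for this choice $Y$ is strictly positive on the closed hemisphere $\overline{S^{N-1}_+}$, yielding $\phi_\beta(x) \geq c|x|^\beta$ uniformly in direction for $|x|$ large.

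\emph{Proof of (1)}: Fix $\varepsilon > 0$ and set $v_\varepsilon := u - \zeta - \varepsilon \phi_\beta$. On $\partial\Omega$, $u \leq 0$ and $\phi_\beta \geq 0$, so $v_\varepsilon \leq -\zeta \leq 0$. On the open set $A_\varepsilon := \{v_\varepsilon > 0\} \subset \Omega$ we have $u > \zeta$, hence $f(u) \leq 0$ by \eqref{ipo1}, and therefore $-\Delta v_\varepsilon = -\Delta u \leq f(u) \leq 0$, i.e., $v_\varepsilon$ is subharmonic on $A_\varepsilon$. The growth $u^+ = O(|x|^\alpha)$ together with $\phi_\beta(x) \geq c|x|^\beta$ and $\beta > \alpha$ force $v_\varepsilon(x) \to -\infty$ as $|x| \to \infty$ in $\Omega$, so $A_\varepsilon$ is bounded; the weak maximum principle then gives $v_\varepsilon \leq 0$ on $A_\varepsilon$, forcing $A_\varepsilon = \emptyset$. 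Hence $u \leq \zeta + \varepsilon \phi_\beta$ on $\Omega$, and letting $\varepsilon \to 0^+$ yields $u \leq \zeta$.

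\emph{Proof of (2)}: Let $M := \sup_\Omega u \leq \zeta$. If $M$ is attained at some $x_0 \in \Omega$, then $\Delta u(x_0) \leq 0$ and $-\Delta u(x_0) \leq f(M)$ directly give $f(M) \geq 0$. If $M$ is not attained in $\Omega$, I would select $x_k \in \Omega$ with $u(x_k) \to M$, translate $u_k(x) := u(x + x_k)$ on $\Omega_k := \Omega - x_k$, and exploit the uniform bound $u_k \leq \zeta$ from (1) together with interior elliptic estimates for the subsolution inequality $-\Delta u_k \leq f(u_k)$ (for instance via the decomposition $u_k = w_k + h_k$ with $\Delta w_k = f(u_k)$ and $h_k$ subharmonic, giving Hölder control of $w_k$) to extract a $C^0_{\mathrm{loc}}$-limit $u_\infty$ on a limit open set $\Omega_\infty$ with $u_\infty(0) = M = \sup u_\infty$, reducing to the attained case. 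For the moreover part, assume $f(M) = 0$ and, for contradiction, that $f \leq 0$ on $[\mu, M]$ for some $\mu < M$: on $U := \{u > \mu\}$ we have $-\Delta u \leq f(u) \leq 0$, so $u$ is subharmonic there, and rerunning the Phragmén--Lindelöf argument of (1) with $\mu$ in place of $\zeta$ (legitimate once $\mu \geq 0$) yields $u \leq \mu$ on $\Omega$, contradicting $\sup u = M > \mu$.

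The main obstacle is the compactness/translation step in (2) when $M$ is not attained: obtaining uniform regularity estimates for a mere subsolution and describing the limit domain $\Omega_\infty$ (especially when $\mathrm{dist}(x_k, \partial\Omega)$ remains bounded along the subsequence) demand delicate elliptic analysis and possibly a Hopf-type argument at boundary points of $\Omega_\infty$. A secondary difficulty, to be addressed by taking $\mu$ close enough to $M$ when $M > 0$ or by a separate barrier argument otherwise, is the moreover part for $\mu < 0$, where $\phi_\beta$ no longer automatically dominates $u$ on $\partial\Omega$.
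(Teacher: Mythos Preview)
Your proof of part (1) is correct and is essentially the paper's argument made explicit: the paper invokes the barrier construction from Lemma~2.1 of \cite{BCN3} (a superharmonic function on a cone with growth $|x|^\beta$ for any $\beta\in(0,1)$), while you build the same object as $|x|^\beta Y(x/|x|)$ via the spherical-cap eigenfunction. The Phragm\'en--Lindel\"of comparison with $v_\varepsilon=u-\zeta-\varepsilon\phi_\beta$ is exactly the mechanism behind the cited remark.

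For part (2), however, your translation/compactness scheme is an unnecessary detour with real obstacles (you only have a differential \emph{inequality}, so interior elliptic estimates do not give $C^2$ compactness, and the limit domain is uncontrolled). The paper avoids all of this by noticing that the first claim of (2) is proved by \emph{exactly} the argument you use for the ``moreover'' part. Namely, if $f(M)<0$ then by continuity $f<0$ on $[M-\varepsilon,M]$ for some $\varepsilon>0$; define $\tilde f=f$ on $(-\infty,M]$ and $\tilde f\equiv f(M)<0$ on $[M,\infty)$, so that $\tilde f\le 0$ on $[M-\varepsilon,\infty)$, and re-apply part (1) with $\tilde f$ and $\zeta$ replaced by $M-\varepsilon$ to obtain $u\le M-\varepsilon$, contradicting $M=\sup_\Omega u$. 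The ``moreover'' claim is handled identically. Thus no compactness is needed anywhere in (2); you already had the right idea in the last paragraph but applied it only to the second assertion. Your observation about the sign of $\mu$ is resolved in the same way: since $M>\mu$, one may always shrink to $\mu'\in(\max\{\mu,0\},M)$ before running the barrier argument.
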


\smallskip

\begin{rem}\label{rem-prop_princ-maxBCN} \quad \\
\noindent (i) The assumption $ \alpha <1$ is sharp in the above result. Indeed, $u(x) =x_N$ is a positive unbounded harmonic function on the half-space $\R^N_+$ with zero Dirichlet boundary condition. 

\noindent (ii) Proposition \ref{prop_princ-maxBCN} also shows that the only bounded solution $u \in C^0(\overline{\Omega}) \cap C^2(\Omega)$ to
\begin{equation*}
		\left\{
		\begin{array}{ccc}
			-\Delta u =0& \text{in} & \Omega,\\
			u \geq 0 & \text{in} & \Omega,\\
			 u =0 & \text{on} & \dr \Omega,
		\end{array}
		\right.
	\end{equation*}
must be $ u \equiv0$. 

Actually, the Remark following Lemma 2.1 in \cite{BCN3} implies that this conclusion holds true for any non dense open set of $ \R^2$. In particular, this result holds for any two-dimensional epigraph. 

\end{rem}    

\medskip

\begin{proof}  $1)$ We argue by contradiction.  Assume that there exists $x_0 \in \Omega$ such that $u(x_0)>\zeta$ and let 
$D \subset \Omega$ be the open connected component of the open set $\left\lbrace u >\zeta \right\rbrace$ which contains  the point $x_0$. Then, $u(x) > \zeta $ for any $x \in D$ and $u(x)=\zeta$ for any $x \in \partial D$ (since $ \partial D \subset \Omega$ thanks to the boundary condition satisfied by $u$ and the definition of $D$). 

Let $w=\zeta -u$. Then $w<0$ in $D$ and $w(x) = O (\vert x \vert^{\alpha})$, when $x \in D$ and $\vert x \vert \longrightarrow \infty$. Moreover, 
	\begin{equation*}
		\Delta w= -\Delta u \leq f(u) \leq 0 \quad \text{in}  \quad D.
	\end{equation*}
Then, the Remark\footnote{\, Indeed, since $\Omega$ is contained in an affine half-space, the exponent $ \alpha $, which appears in the barrier $g$ defined by equation (2.1) on p. 1094 of \cite{BCN3}, can be chosen arbitrarily in the open interval $(0,1)$. } following Lemma 2.1 in \cite{BCN3} yields $w \geq 0$ in $D$, a contradiction. Therefore, $ u \leq \zeta$ on $ \Omega$.

\medskip

If $f(\sup_{\Omega} u) < 0$, then there is $ \varepsilon >0$
such that $ f(t) < 0$ in $\left[ \sup_{\Omega} u -\varepsilon, \sup_{\Omega} u\right] $. Hence, $u$ is bounded above and satisfies
\begin{equation*}
		\left\{
		\begin{array}{ccc}
			-\Delta u \leq \tilde{f} (u) & \text{in} & \Omega,\\
			 u \leq 0 & \text{in} & \dr \Omega,
		\end{array}
		\right.
	\end{equation*}	
where $\tilde{f} \in C^0(\R)$ is defined by $\tilde{f}=f$ in $(-\infty, \sup_{\Omega} u ]$ and $\tilde{f} \equiv f(\sup_{\Omega} u) < 0 $ in $[ \sup_{\Omega} u, +  \infty)$. We can then apply item 1) to conclude that $ u \leq \sup_{\Omega} u -\varepsilon$ in $\Omega$.  A contradiction. The same argument proves the last claim. \end{proof}

\medskip

\begin{prop}\label{prop_K-O-B-type} 
Let $\Omega \subset \R^{N}$ be a proper domain.  Let $h \in C^0([0,+\infty))$ be a  function satisfying the Keller-Osserman conditions 
\begin{equation}\label{K-O-cond} 
           \left\{
           \begin{array}{ccc}
            h(0) =0, \quad h(t)>0 \quad \textit{for} \quad t>0, \quad \textit{h non-decreasing},\\
            & \\
            \int^{\infty} \Big( \int_0^t h(s) ds  \Big)^{-\frac{1}{2}} < \infty .
          \end{array}
           \right.
\end{equation}	
Let $v \in C^0(\overline{\Omega}) \cap H^1_{loc}(\overline{\Omega}) $ be a solution to
\begin{equation*}
		\left\{
		\begin{array}{ccc}
			\Delta v \geq h(v) & \text{in} & \mathcal{D'}(\Omega),\\
			 v \geq 0 & \text{on} &\Omega, \\
			 v=0 & \text{on} & \dr \Omega.
		\end{array}
		\right.
	\end{equation*}	
Then, $ v \equiv 0$ in $\Omega $. 	
\end{prop}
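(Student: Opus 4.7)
The plan is to bound $v$ at an arbitrary point of $\Omega$ by the Keller--Osserman \emph{large solutions} built on concentric balls centered at that point, and then pass to the limit as the radius tends to infinity. Pick $x_0 \in \Omega$. For every $R > 0$, the integrability condition in \eqref{K-O-cond} guarantees the classical existence of a radial function $W_R \in C^2(B(x_0,R))$ satisfying
\[
\Delta W_R = h(W_R) \ \text{ in } B(x_0,R), \qquad W_R > 0, \qquad W_R(x) \to +\infty \ \text{ as } |x-x_0| \to R^-,
\]
with $W_R$ attaining its minimum at $x_0$. A standard analysis of the underlying radial ODE, again using \eqref{K-O-cond}, yields moreover $W_R(x_0) \downarrow 0$ as $R \to +\infty$.

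I would then let $\omega_R$ denote the connected component of $B(x_0,R)\cap \Omega$ containing $x_0$, and compare $v$ to $W_R$ on $\omega_R$. Along $\partial \omega_R \cap \partial \Omega$ one has $v = 0 \leq W_R$, while along $\partial \omega_R \cap \partial B(x_0,R)$ one has $W_R = +\infty \geq v$. Since $h$ is nondecreasing, the function $w := (v - W_R)^+$ is a nonnegative weak subsolution of $\Delta w \geq 0$ on $\omega_R$ (with $w \equiv 0$ near the points where $W_R = +\infty$), and it vanishes on the remainder of $\partial \omega_R$ by the boundary analysis above. The weak maximum principle---applied on the exhaustion $\omega_R \cap B(x_0, R-\varepsilon)$ and then letting $\varepsilon \downarrow 0$ to handle the blow-up of $W_R$---yields $w \equiv 0$ on $\omega_R$, whence $v \leq W_R$ on $\omega_R$ and in particular $v(x_0) \leq W_R(x_0)$.

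Letting $R \to +\infty$ gives $v(x_0) \leq 0$, so $v(x_0) = 0$ by the sign assumption. Since $x_0 \in \Omega$ was arbitrary, this yields $v \equiv 0$. The main technical obstacle lies in the comparison step, where one must handle simultaneously the blow-up of $W_R$ at $\partial B(x_0,R)$, the $H^1_{loc}$ (rather than $C^2$) regularity of $v$, and the potential irregularity of $\omega_R$ where $\partial B(x_0,R)$ meets $\partial \Omega$; the exhaustion by $\omega_R \cap B(x_0, R-\varepsilon)$ reduces matters to the standard weak maximum principle on a set on which $W_R$ is merely a large finite upper bound, and the passage $\varepsilon \downarrow 0$ is then routine.
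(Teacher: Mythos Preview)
Your argument is correct and takes essentially the same route as the paper: comparison of $v$ with a radial Keller--Osserman large solution on a ball, using the monotonicity of $h$ and a truncation of the ball to sidestep the boundary blow-up (your exhaustion by $B(x_0,R-\varepsilon)$ corresponds exactly to the paper's passage to the sub-ball $B(x_0,R(M))$ on which the large solution already dominates $\sup v$). The only organizational difference is that the paper, instead of sending $R\to\infty$ and invoking $W_R(x_0)\to 0$, argues by contradiction with a \emph{single} large solution having prescribed center value $\varepsilon = v(x_0)/2$, which yields directly $v(x_0)\le\varepsilon$; this is a touch more self-contained, since the fact $W_R(x_0)\to 0$ that you cite, while classical, is itself proved by the very same comparison argument.
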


\medskip

\begin{proof}
We recall that, the Keller-Osserman condition \eqref{K-O-cond} implies that for any $ \varepsilon>0$ there exists 
$R_\varepsilon>0$ such that 
\begin{equation}\label{sol-expl}
\Delta w = h(w)  \quad \text{in}  \quad B(0,R_\varepsilon), 
\end{equation}
admits a $C^2$-radially symmetric, positive solution $w_\varepsilon$, which blows up at the boundary of the ball. Furthermore, 
$w_\varepsilon(0) = \varepsilon = \inf_{B(0,R_\varepsilon)}w_\varepsilon$.

For contradiction, suppose that $ v \not \equiv 0$ in $\Omega$. Then, there exists $ x_0 \in \Omega$ such that $v(x_0)>0$.
Let $ \varepsilon = \frac{v(x_0)}{2} >0$ and consider the corresponding solution $w_\varepsilon$ of \eqref{sol-expl}, satisfying the above properties.  Therefore,  the function $ u(x) = w_\varepsilon(\vert x- x_0 \vert)$ is a $C^2$-radially symmetric (with respect to $x_0$), positive solution to $\Delta w = h(w)$ in  $B(x_0,R_\varepsilon)$, which blows up at the boundary of the ball $B(x_0,R_\varepsilon)$. 

Let us consider the open set $U := B(x_0,R_\varepsilon) \cap \Omega$ and set $ M:= \sup_{\overline{U}} v$. Then, $M$ is finite, since $v \in C^0(\overline{\Omega})$ by assumption. Thus, there is $0 <R(M) < R_\varepsilon $ such that $u(x) > M$ on $ B(x_0,R_\varepsilon) \setminus B(x_0,R(M))$ (recall that $u$ blows up at the boundary of the ball $B(x_0,R_\varepsilon)$). The latter and the homogeneous Dirichlet condition satisfied by $v$,  imply that $v - u \leq 0 $ on 
$\partial \left[ B(x_0,R(M)) \cap \Omega\right] $. It follows that $(v-u)^+ \in H^1_0(B(x_0,R(M)) \cap \Omega)$ and so, we can take it as test function in the weak formulation of the equation satisfied by $v-u$ in $B(x_0,R(M)) \cap \Omega$, that is, in the weak formulation of $ - \Delta (v-u) \leq h(u) - h(v)$ in $B(x_0,R(M)) \cap \Omega$. This choice leads to 
$$
\int \vert \nabla (v-u)^+ \vert^2 = \int \nabla (v-u)  \nabla (v-u)^+ \leq  \int (h(u)-h(v))(v-u)^+ \leq 0
$$
since $h$ is non-decreasing. Consequently, $v \leq u$ on $B(x_0,R(M)) \cap \Omega$, by the continuity of $(v-u)^+$. In particular we have that $ 0< v(x_0) \leq u(x_0) = w_\varepsilon(0) = \varepsilon =  \frac{v(x_0)}{2}$. A contradiction. This proves the Proposition. 
\end{proof}

\medskip

\begin{cor}\label{cor_K-O-B-type} 
Let $\Omega \subset \R^{N}$ be a proper domain.  Let  $\tilde f \in C^0(\R)$ be a continuous function satisfying 
\begin{equation}\label{f-K-O-cond} 
           \left\{
           \begin{array}{ccc}
           \exists \, \mu \geq 0 \quad : \quad -\tilde f (t) \geq h(t) \quad \textit{for} \quad t>\mu, \\
           & \\
           \textit{where $h \in C^0([0,+\infty))$ satisfies the Keller-Osserman condition } \eqref{K-O-cond} .
          \end{array}
           \right.
\end{equation}	
Let $v \in C^0(\overline{\Omega}) \cap H^1_{loc}(\overline{\Omega}) $ be a solution to
\begin{equation}\label{subsol-f-K-0-cond}
		\left\{
		\begin{array}{ccc}
			- \Delta v \leq \tilde f (v) & \text{in} & \mathcal{D'}(\Omega),\\
			 v \leq \mu & \text{on} & \dr \Omega,
		\end{array}
		\right.
	\end{equation}	
then, $v \leq\mu$ in $\Omega $. 	

\noindent In particular, any $u \in C^1(\overline{\Omega}) \cap C^2(\Omega)$ solution to 
\begin{equation}\label{NonLin-PoissonEq-ancora}
\begin{cases}
-\Delta u=f(u) & \text{ in } \Omega,\\
\quad u \geq 0 & \text{ in } \Omega,\\
\quad u=0\,\, &\text{ on } \partial\Omega,
\end{cases}
\end{equation}
where $f$ is a specified Allen-Cahn type nonlinearity, satisfies $ u \leq 1$ in $ \Omega$. 
\end{cor}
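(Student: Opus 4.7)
The plan is to argue by contradiction, reducing the statement to an application of Proposition \ref{prop_K-O-B-type} after shifting by $\mu$. Suppose that $v(x_0) > \mu$ at some $x_0 \in \Omega$, and let $D$ denote the connected component of the open set $\{v > \mu\}$ containing $x_0$. Since $v$ is continuous and $v \leq \mu$ on $\partial\Omega$, the boundary $\partial D$ lies entirely in the level set $\{v = \mu\}$, so the shifted function $w := v - \mu$ belongs to $C^0(\overline D) \cap H^1_{loc}(\overline D)$, is strictly positive on $D$, vanishes on $\partial D$, and satisfies, in the distributional sense on $D$,
\begin{equation*}
-\Delta w \,=\, -\Delta v \,\leq\, \tilde f(v) \,\leq\, -h(v) \,=\, -\bar h(w),
\end{equation*}
where the shifted nonlinearity $\bar h(t) := h(t+\mu)$ is continuous, non-decreasing, and inherits the Keller-Osserman integral condition from $h$ via the change of variables $s \mapsto s - \mu$.

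Next I would reproduce the barrier comparison of Proposition \ref{prop_K-O-B-type} in the translated frame: set $\varepsilon := w(x_0)/2 > 0$ and use the Keller-Osserman condition for $\bar h$ to produce a radial blow-up solution $U_\varepsilon$ of $\Delta U = \bar h(U)$ on a ball $B(0,R_\varepsilon)$ with $U_\varepsilon(0) = \varepsilon$ and $U_\varepsilon \to +\infty$ on $\partial B(0,R_\varepsilon)$. Translating the barrier to $x_0$ and choosing $R(M) < R_\varepsilon$ so that $U_\varepsilon(|\cdot - x_0|)$ dominates $M := \sup_{B(x_0,R_\varepsilon) \cap \overline D} w$ on the outer annulus, one has $w \leq U_\varepsilon(|\cdot - x_0|)$ on the entire boundary of $B(x_0, R(M)) \cap D$, so that $\bigl(w - U_\varepsilon(|\cdot - x_0|)\bigr)^{+}$ is a legitimate test function in $H^1_0(B(x_0,R(M)) \cap D)$. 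The monotonicity of $\bar h$ then forces $\int |\nabla(w - U_\varepsilon(|\cdot - x_0|))^+|^2 \leq 0$, hence $w \leq U_\varepsilon(|\cdot - x_0|)$ on $B(x_0,R(M)) \cap D$; evaluating at $x_0$ yields $w(x_0) \leq \varepsilon = w(x_0)/2$, the desired contradiction.

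For the \emph{in particular} assertion, I would apply the first part with $\tilde f = f$ and $\mu = 1$: condition $\mathtt{W}4$ gives $-f(t) = W'(t) \geq c(t-1)^p$ for $t > 1$ with some $p > 1$, so one may choose $h(t) := c(t-1)_+^p$, whose shift $\bar h(t) = c\, t^p$ manifestly satisfies all the requirements needed in the barrier construction above, the Keller-Osserman integrability following from $(p+1)/2 > 1$. Since any solution $u$ of \eqref{NonLin-PoissonEq-ancora} satisfies $u = 0 \leq 1 = \mu$ on $\partial\Omega$, the first part of the corollary yields $u \leq 1$ in $\Omega$.

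The main technical subtlety will be the fact that the translated obstacle $\bar h$ need not satisfy $\bar h(0)=0$ (and, in the Allen-Cahn specialization, the natural choice $h(t)=c(t-1)_+^p$ is only positive on $(\mu,+\infty)$), which is a mild relaxation of the setting of Proposition \ref{prop_K-O-B-type}. This will have to be handled by either (i) checking that the existence of the radial blow-up barrier $U_\varepsilon$ and the weak comparison go through under the slightly weakened hypotheses, or (ii) replacing $\bar h$ by a convenient continuous non-decreasing minorant that is strictly positive on $(0,+\infty)$ while retaining the integral condition; both routes are standard once the reduction to $w$ on $D$ is in place.
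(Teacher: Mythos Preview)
Your approach is correct, but the paper reaches the conclusion more economically. Rather than restricting to a connected component $D$ of $\{v>\mu\}$ and working with the shifted obstacle $\bar h(t)=h(t+\mu)$ (which, as you rightly flag, need not satisfy $\bar h(0)=0$), the paper applies Kato's inequality to $(v-\mu)^+$ directly on $\Omega$, obtaining
\[
\Delta (v-\mu)^+ \;\geq\; -\tilde f(v)\,\mathbf{1}_{\{v>\mu\}} \;\geq\; h(v)\,\mathbf{1}_{\{v>\mu\}} \;\geq\; h\bigl((v-\mu)^+\bigr)\quad\text{in }\mathcal{D}'(\Omega),
\]
where the last inequality uses only the monotonicity of $h$ (on $\{v>\mu\}$ one has $v\geq (v-\mu)^+$, and on the complement both sides vanish since $h(0)=0$). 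This feeds straight into Proposition~\ref{prop_K-O-B-type} with the \emph{original} $h$, so no modification of the Keller--Osserman hypotheses is needed and the technical subtlety you identify simply disappears. For the Allen--Cahn specialisation the paper likewise shifts first, taking $v=u-1$, $\tilde f(t)=f(t+1)$, $\mu=0$ and $h(t)=ct^p$, so that $h$ again satisfies all of \eqref{K-O-cond} on the nose. Your route works as well, but it requires either re-running the barrier argument under relaxed hypotheses or manufacturing an auxiliary minorant; the paper's monotonicity trick $h(v)\geq h((v-\mu)^+)$ sidesteps this entirely.
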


\begin{proof} 
By assumption \eqref{f-K-O-cond} and Kato's inequality, we have 
$$
\Delta (v- \mu)^+ \geq - \tilde f(v) \textbf{1}_{\left\lbrace v > \mu \right\rbrace } \geq h(v) \textbf{1}_{\left\lbrace v > \mu \right\rbrace } \geq h((v- \mu)^+)  \quad \text{in} \quad \mathcal{D'}(\Omega)
$$ 
and $ (v- \mu)^+  = 0$ on $\partial \Omega$.  Then, $(v- \mu)^+  = 0$ on $\Omega$ by Proposition \ref{prop_K-O-B-type}.

The second part of the claim follows from the first one, by taking $\tilde f(t) = f(t+1)$, $h(t)=ct^p$, $\mu=0$ and $v= u-1$. Indeed, the assumption $\mathtt{W}4$) implies that \eqref{f-K-O-cond} is satisfied (recall that $p>1$ and $ c>0$ in $\mathtt{W}4$)).  \end{proof}

\begin{thm}\label{théorème_2.2}
	Let $\Omega \subset \R^{N}$ be a proper  open set of class $C^1$ and $\eta$ its outward unit normal. Assume $f \in L^{\infty}(\Omega)$ and let $v \in C^1(\overline{\Omega}) \cap C^2(\Omega)$ be a solution to 
	\begin{equation*}
		\left\{
		\begin{array}{ccc}
			-\Delta v=f  & \text{in} & \Omega,\\
			v \geq 0 & \text{in} & \Omega, \\
			v=0 & \text{on} & \dr \Omega.
		\end{array}
		\right.
	\end{equation*}
	Then, there exists a positive constant $C$, depending only on $N$, such that 
	\begin{equation*}
	\forall \, x \in \Omega \quad \qquad |\nabla v(x)|  \leq C \Big( \Big|\frac{\partial v}{\partial \eta}(x')\Big| +d(x,\partial \Omega) \|f\|_{L^{\infty}(B(x,d_x))}\Big),
	\end{equation*} 
	where $x' \in \partial \Omega$ is such that $|x-x'|=d(x,\partial \Omega)=: d_{x}$.
\end{thm}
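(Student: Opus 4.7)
The plan is to reduce everything to the maximal interior ball $B := B(x, d_x)$ (which lies in $\Omega$ by definition of $d_x$ and is internally tangent to $\partial \Omega$ at $x'$), and to split $v$ on $B$ as $v = v_1 + v_2$, where $v_2 \in C(\overline{B}) \cap C^{1,\alpha}(\overline{B}) \cap C^2(B)$ solves $-\Delta v_2 = f$ in $B$ with $v_2 = 0$ on $\partial B$, and $v_1 := v - v_2$ is harmonic in $B$. The classical barrier $\frac{\|f\|_{L^\infty(B)}}{2N}(d^2 - |y-x|^2)$ and the Green-function representation on the ball immediately give
\[
\|v_2\|_{L^\infty(B)} \le C_N d^2 \|f\|_{L^\infty(B)}, \qquad \|\nabla v_2\|_{L^\infty(\overline{B})} \le C_N d \|f\|_{L^\infty(B)}.
\]

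For the harmonic part $v_1$, I would first observe that $v_1 \geq 0$ on $\overline{B}$: on $\partial B$ we have $v_1 = v \geq 0$ (since $\overline{B} \subset \overline \Omega$), and the maximum principle propagates this into $B$; moreover, since $v(x') = 0$ and $v_2(x') = 0$ (as $x' \in \partial B$), we have $v_1(x') = 0$. The crux is the following quantitative bound, which I would prove: \emph{for every non-negative harmonic function $h$ on $B_1 \subset \R^N$, continuous on $\overline{B_1}$ and $C^1$ near $e_N$, with $h(e_N) = 0$, one has $h(0) \le 2^{N-1}\, |\partial_\nu h(e_N)|$.} The proof proceeds by passing the inward normal limit through the Poisson representation $h((1-t) e_N) = \int_{\partial B_1} P((1-t) e_N, z) h(z)\, d\sigma(z)$: using $P((1-t)e_N, z)/t \to 2/(|\partial B_1|(2(1-z_N))^{N/2})$ for $z \ne e_N$, Fatou's lemma yields
\[
|\partial_\nu h(e_N)| = \lim_{t \to 0^+} \frac{h((1-t) e_N)}{t} \ge \frac{2}{|\partial B_1|} \int_{\partial B_1} \frac{h(z)\, d\sigma(z)}{(2(1-z_N))^{N/2}} \ge \frac{1}{2^{N-1}} \cdot \frac{1}{|\partial B_1|} \int_{\partial B_1} h\, d\sigma = \frac{h(0)}{2^{N-1}},
\]
using $(2(1-z_N))^{N/2} \le 2^N$ on $\partial B_1$ in the second step and the mean-value equality in the last. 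Rescaling this inequality to $B(x,d)$, and noting that the outward normal to $B$ at $x'$ coincides with $\eta(x')$ (because $B$ is internally tangent to $\partial\Omega$ at $x'$), yields $v_1(x) \le 2^{N-1}\, d\, |\partial_\eta v_1(x')|$.

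To conclude, I would combine this with the classical center-gradient estimate $|\nabla v_1(x)| \le (N/d)\, v_1(x)$ for non-negative harmonic functions (also obtained directly from the Poisson kernel), giving $|\nabla v_1(x)| \le N \cdot 2^{N-1}\, |\partial_\eta v_1(x')|$. Since at $x'$ we have $|\partial_\eta v_1(x')| \le |\partial_\eta v(x')| + |\partial_\eta v_2(x')| \le |\partial_\eta v(x')| + C_N d\, \|f\|_{L^\infty(B)}$, the triangle inequality $|\nabla v(x)| \le |\nabla v_1(x)| + |\nabla v_2(x)|$ together with the bound on $\nabla v_2$ produces the claimed estimate with a purely dimensional constant $C=C(N)$.

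The principal obstacle is the sharp \emph{reverse-Hopf} bound $h(0) \le 2^{N-1}|\partial_\nu h(e_N)|$: it is attained (up to approximation) by the Poisson kernel with pole at $-e_N$, so the constant cannot be improved, and its proof requires the one-sided Fatou argument above, together with the regularity $v_1 \in C^1$ near $x'$ (inherited from $v \in C^1(\overline \Omega)$ and from the $C^{1,\alpha}$-regularity of $v_2$ on $\overline B$) in order to identify the LHS limit with the normal derivative. The remaining ingredients — maximum-principle barriers and Green-function gradient bounds for the Poisson equation on a ball — are classical and dimensional.
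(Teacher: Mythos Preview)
Your argument is correct and follows the same overall architecture as the paper: split $v$ on the maximal interior ball $B(x,d_x)$ into a zero-boundary Poisson part $v_2$ and a non-negative harmonic part $v_1$, bound $\|\nabla v_2\|_{L^\infty}$ by $C(N)\,d_x\|f\|_{L^\infty}$ via the quadratic barrier, and bound $|\nabla v_1(x)|$ by a dimensional multiple of $|\partial_\eta v_1(x')|$. The only substantive difference lies in how you establish the last ``reverse Hopf'' bound. The paper (Lemma~\ref{lemme_gradient_partie_harmonique}) first uses the interior Harnack inequality on $B(0,1/2)$ to control $h(0)$ by $\inf_{\partial B(0,1/2)} h$, then compares $h$ with a multiple of the fundamental solution on the annulus $B(0,1)\setminus \overline{B(0,1/2)}$ to extract a lower bound for $|\partial_\eta h|$ at the boundary point where $h$ vanishes, and finally invokes the mean-value gradient estimate. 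Your route is more direct: you pass the inward normal difference quotient through the Poisson representation and apply Fatou, which yields the explicit constant $2^{N-1}$ for $h(0)\le 2^{N-1}|\partial_\nu h(e_N)|$ (whereas the paper's constant involves the Harnack constant). Both methods require only that $v_1\in C^1$ up to the tangency point $x'$, which you correctly inherit from $v\in C^1(\overline\Omega)$ and the $C^{1,\alpha}$-regularity of $v_2$ on $\overline B$, and both use that the outward normals to $B$ and to $\Omega$ agree at $x'$. The final assembly via the triangle inequality is identical.
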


\begin{proof}
	Let $x \in \Omega$ and $x' \in \partial \Omega$ such that $|x-x'|=d(x,\partial \Omega)$.\\
	According to Theorem 8.34 in \cite{gt}, there exists $u \in C^{1,\alpha}(\overline{B(x,d_{x})})$ a weak solution to
	\begin{equation*}
		\left\{
		\begin{array}{ccc}
			-\Delta u=f & \text{in} & B(x,d_{x}) ,\\
			u = 0 & \text{on} &  \partial B(x,d_{x}).
		\end{array}
		\right.
	\end{equation*}
	By Lemma \ref{lemme_gradient_partie_dirichlet}, there is $C_1=C_1(N)>0$ such that 
	\begin{equation}\label{dérivée_partie_dirichlet}
		\|\nabla u \|_{L^{\infty}(B(x,d_{x}))} \leq C_{1}d_{x} \|f\|_{L^{\infty}(B(x,d_{x}))}.
	\end{equation}
	Now we set $ h=v-u, $ then $h \in C^{2}(B(x,d_{x})) \cap C^{1}(\overline{B(x,d_{x})})$ and 
	\begin{equation*}
		\left\{
		\begin{array}{ccc}
			-\Delta h=0 & \text{in} & B(x,d_{x}) ,\\
			h \geq 0 & \text{on} &  \overline{B(x,d_{x})},
		\end{array}
		\right.
	\end{equation*} 
where the latter follows by the maximum principle, since $ h = v \geq 0 $ on $\partial B(x,d_{x}) \subset \overline{\Omega}$. 

	Therefore, by Lemma \ref{lemme_gradient_partie_harmonique}, there exists a positive constant $C_2= C_2(N)>0$ such that 
	\begin{equation}
		|\nabla h(x)| \leq C_{2} \Big|\frac{\partial h}{\partial \eta}(x')\Big|.\label{dérivée_h_harmonique}
	\end{equation}
	Finally, by \eqref{dérivée_partie_dirichlet} and \eqref{dérivée_h_harmonique}, we obtain
	\begin{equation*}
		\begin{split}
			|\nabla v(x)| &\leq |\nabla u(x)|+|\nabla h(x)| \leq  C_{1}d_{x} \|f\|_{L^{\infty}(B(x,d_{x}))} +C_{2} \Big|\frac{\partial h}{\partial \eta}(x')\Big|\\
			&  \leq C_{1} d_{x} \|f\|_{L^{\infty}(B(x,d_{x}))} +C_{2} \Big|\frac{\partial u}{\partial \eta}(x')\Big| + C_{2} \Big|\frac{\partial v}{\partial \eta}(x')\Big|\\
			& \leq (1+C_{2})C_{1}d_{x} \|f\|_{L^{\infty}(B(x,d_{x}))} + C_{2} \Big|\frac{\partial v}{\partial \eta}(x')\Big|\\
			& \leq \max\{C_{2},(1+C_{2})C_{1}\} \Big(d_{x}\|f\|_{L^{\infty}(B(x,d_{x}))}+\Big|\frac{\partial v}{\partial \eta}(x')\Big| \Big).
		\end{split}
	\end{equation*} 
	\end{proof}

\begin{cor}\label{cor_gradient}
Let $\Omega \subset \R^{N}$ be a proper  open set of class $C^1$. Let $f \in C^{0}(\R)$ and $u \in C^{2}(\Omega)\cap C^{1}(\overline{\Omega})$ be a bounded solution to \eqref{probleme}. \\
Then $\nabla u \in L^{\infty}(\Omega)$ and 
	\begin{equation*}
		\|\nabla u \|_{L^{\infty}(\Omega)}\leq C(|\mathfrak{c} |+\|u\|_{L^{\infty}(\Omega)} +\|f\|_{L^{\infty}([0,\|u\|_{L^{\infty}(\Omega)}])})
	\end{equation*}
where $C>0$ is a constant depending only on $N$.
\end{cor}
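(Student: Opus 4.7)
The plan is to reduce the corollary to two regimes according to the distance $d_x := d(x,\partial\Omega)$: a near-boundary regime where Theorem~\ref{théorème_2.2} applies directly, and a deep-interior regime where a fixed-scale interior gradient estimate for Poisson's equation takes over. Observe first that since $u$ is bounded and $f\in C^0(\R)$, we have $f(u)\in L^\infty(\Omega)$ with
\[
\|f(u)\|_{L^\infty(\Omega)}\,\leq\, \|f\|_{L^\infty([0,\|u\|_{L^\infty(\Omega)}])},
\]
so both $u$ and $-\Delta u$ are uniformly bounded on $\Omega$ with the right quantitative control.

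In the near-boundary regime $d_x\leq 1$, I would choose $x'\in \partial\Omega$ realising the distance, apply Theorem~\ref{théorème_2.2} to $v=u$ with right-hand side $f(u)$, and use the overdetermined Neumann condition $\partial u/\partial\eta(x')=\mathfrak{c}$ together with $d_x\leq 1$ to conclude
\[
|\nabla u(x)|\,\leq\, C(N)\Bigl(|\mathfrak{c}|+\|f\|_{L^\infty([0,\|u\|_{L^\infty(\Omega)}])}\Bigr).
\]

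In the deep-interior regime $d_x>1$, the ball $B(x,1)$ is entirely contained in $\Omega$, so I would invoke the classical interior gradient estimate for the Poisson equation $-\Delta u=f(u)$ on $B(x,1)$ (e.g.\ Theorem~3.9 of \cite{gt}, scaled with $r=1$), which yields a constant $C=C(N)$ with
\[
|\nabla u(x)|\,\leq\, C(N)\bigl(\|u\|_{L^\infty(B(x,1))}+\|f(u)\|_{L^\infty(B(x,1))}\bigr)\,\leq\, C(N)\bigl(\|u\|_{L^\infty(\Omega)}+\|f\|_{L^\infty([0,\|u\|_{L^\infty(\Omega)}])}\bigr).
\]

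Combining both regimes (taking the larger of the two dimensional constants) produces the announced uniform bound with $C=C(N)$. There is no real obstacle: the factor $d_x$ in Theorem~\ref{théorème_2.2} is harmless precisely where it appears, namely for points near $\partial\Omega$, and far from the boundary the equation becomes purely interior so scale-fixed gradient estimates close the loop.
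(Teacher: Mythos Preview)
Your proposal is correct and matches the paper's proof essentially line by line: the paper splits into the same two regimes (using $d(x,\partial\Omega)<1$ versus $d(x,\partial\Omega)\geq 1$), applies Theorem~\ref{théorème_2.2} with the Neumann condition near the boundary, and invokes the interior gradient estimate (Theorem~3.9 of \cite{gt}) on the unit ball in the interior.
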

\begin{proof}
	Let $x  \in \Omega$. 
	
\smallskip

\noindent  $1)$ If $d(x,\partial \Omega) \geq 1$, then $B(x,1) \subset \Omega$. Hence, we can apply a standard interior gradient estimate (see for instance Theorem 3.9 in \cite{gt} ) to get 
	\begin{equation}\label{gradient_loin}
		|\nabla u(x)| \leq C_1(N) (\|u\|_{L^{\infty}(\Omega)}+ \|f\|_{L^{\infty}([0,\|u\|_{L^{\infty}(\Omega)}])}).
	\end{equation}

\noindent $2)$ If $d(x,\partial \Omega)<1$, let $x' \in \partial \Omega$ be such that $d(x,\partial \Omega)=|x-x'|$.
Then, Theorem \ref{théorème_2.2} implies that 
\begin{equation}\label{gradient_pret}
	|\nabla u(x)| \leq C_2(N)(|\mathfrak{c} |+ \|f\|_{L^{\infty}([0,\|u\|_{L^{\infty}(\Omega)}])}).
\end{equation} 
The desired conclusion then follows from \eqref{gradient_loin} and \eqref{gradient_pret}. \end{proof}

\subsection{Some uniform estimates}\label{Appendix-unif-estim}\quad \\

\noindent Assume $ N \geq2$. In the following, for a continuous function $h \,:\, \mathbb{R}^{N-1}\rightarrow \mathbb{R}$, we consider  its epigraph 
\[
\omega\,:=\, \left\lbrace x=(x',x_N) \in\mathbb{R}^{N-1}\times\mathbb{R} \,:\, x_N> h(x') \right\rbrace 
\]
and, for any $R>0$ and any $ \tau >\displaystyle\sup_{B'(0',R)} \, \vert h \vert$, we set 
		\begin{equation}\label{epi-cilindro}
			\mathfrak{C}^{h}(0',R,\tau)= \left\lbrace x=(x',x_N)\in \R^N, \, x' \in B'(0',R) \, \text{ and } \,\, h(x')< x_N<\tau 
			\right\rbrace , 
		\end{equation}
the intersection of the epigraph $\omega$ with the truncated cylinder $B'(0',R) \times \left( -\tau,\tau \right)$ and 
        \begin{equation*}
		\widehat{\mathfrak{C}^{h}}(0',R,\tau) = \mathfrak{C}^{h}(0',R,\tau) \cup 
         \left\lbrace x=(x',x_{N}) \in B'(0',R) \times \R, \, x_{N}=h(x') \right\rbrace \, .
		\end{equation*}

\bigskip

Then, the next uniform estime holds. 

\smallskip

\begin{thm}\label{estimé_c_1}
Assume $\sigma \in (0,1]$, $ c \in \R$, $\mathcal{E}>0$ and let $h \in \mathbb{C}^{1,\sigma}(\R^{N-1})$ be such that $\|\nabla h\|_{C^{0,\sigma}(\R^{N-1})} \leq \mathcal{E}$. \\
Let $\widetilde{R}>0$, $ \widetilde{\tau} > \max \left\lbrace \widetilde{R}, \, \displaystyle 4\sup_{\overline{B'(0',\widetilde{R})}} \vert h \vert\right\rbrace $ and 
$u \in C^1(\overline{\mathfrak{C}^{h}(0',\widetilde{R},\widetilde{\tau})}) \cap H^2({\mathfrak{C}^{h}(0',\widetilde{R},\widetilde{\tau})}) $ satisfy 
\begin{equation}\label{equationestimé1}
	\left\{
	\begin{array}{ccc}
		-\Delta u=f(u)  & \text{in} & \mathcal{D}'(\mathfrak{C}^{h}(0',\widetilde{R},\widetilde{\tau})),\\
		u=0 & \text{on} & \dr \omega \cap \widehat{\mathfrak{C}^{h}}(0',\widetilde{R},\widetilde{\tau}),\\
		\dfrac{\partial u}{\partial \eta}=c & \text{on} & \dr \omega \cap \widehat{\mathfrak{C}^{h}}(0',\widetilde{R},\widetilde{\tau}),
	\end{array}
	\right.
\end{equation}     
where $ f \in {Lip}_{loc}(\R)$. \\
Then, there exists $\alpha=\alpha(N,\mathcal{E}, \sigma) \in (0,1)$ such that $\nabla u \in 
C^{0,\alpha} (\overline{\mathfrak{C}^{h}(0',r,t)})$ for any 
$0<r<\frac{\widetilde{R}}{2}$ and any $\disp\sup_{B'(0',r)} |h| <t<\frac{3}{4}\widetilde{\tau}$,

\noindent and
\begin{equation}\label{estimeéelliptique1}
	\left[ \nabla u \right]_{C^{0,\alpha}  (\overline{\mathfrak{C}^{h}(0',r,t)})} \leq  C  
 \left[ |c| \left( 1+ \|\nabla h\|_{C^{0,\sigma}(\R^{N-1})} \right)^2  + 1 + \|f'\|_{L^{\infty}([-M,M])} \right]	
(1 + \|\nabla u\|_{L^{\infty}(\mathfrak{C}^{h}(0',\widetilde{R},\widetilde{\tau})))}) 
\end{equation}
where $C=C(r,\widetilde{R},\widetilde{\tau},\alpha,N) >0 $ and $M=\|u\|_{L^\infty (\mathfrak{C}^{h}(0',\widetilde{R},\widetilde{\tau}))}.$
\end{thm}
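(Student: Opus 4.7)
The plan is to flatten the graph boundary, convert the overdetermined Dirichlet--Neumann pair into a fully prescribed trace for $\nabla u$ on $\partial\omega$, and then run a boundary Campanato-type iteration with constants tracked explicitly in $\|\nabla h\|_{C^{0,\sigma}(\R^{N-1})}$.

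First I would introduce the diffeomorphism $\Phi(y',y_N) = (y', y_N + h(y'))$, which maps the half-cylinder $\{y\in B'(0',\widetilde R)\times\R : y_N > 0\}$ onto a suitable portion of $\omega$. The pullback $v := u\circ \Phi$ satisfies a uniformly elliptic divergence-form equation $-\mathrm{div}(A(y')\nabla v) = f(v)$, with $A$ depending polynomially on $\nabla h$; its ellipticity constants depend only on $\|\nabla h\|_{L^\infty}$ (hence on $\mathcal{E}$), and $\|A\|_{C^{0,\sigma}}$ is controlled by $C(1+\|\nabla h\|_{C^{0,\sigma}})^2$. Next, since $u=0$ on $\partial\omega$ forces the tangential part of $\nabla u$ to vanish, combining with $\partial u/\partial\eta = c$ yields the pointwise identity $\nabla u = c\eta$ on $\partial\omega$. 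In flat coordinates this gives $\nabla_{y'} v(y',0) \equiv 0$ and $\partial_{y_N} v(y',0) = -c/\sqrt{1+|\nabla h(y')|^2}$, so the entire boundary gradient is known as an explicit $C^{0,\sigma}$ function whose norm is bounded by $C|c|(1+\|\nabla h\|_{C^{0,\sigma}})$.

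With this data I would carry out the standard boundary regularity argument for divergence-form operators with H\"older coefficients: at each boundary point, freeze the coefficients $A$, subtract the affine lift $y_N(-c/\sqrt{1+|\nabla h|^2})$ to homogenize the prescribed normal derivative, and compare $v$ on small half-balls with the constant-coefficient harmonic replacement having the same boundary trace. The constant-coefficient problem satisfies a $C^{1,\alpha}$ decay estimate (by reflection), while the perturbation errors are of order $\rho^\sigma \|A\|_{C^{0,\sigma}}\|\nabla v\|_{L^\infty}$ and $\rho\|f(v)\|_{L^\infty}$. Iterating produces Morrey-type decay for the Campanato seminorm of $\nabla v$ at the boundary, and interior Schauder estimates handle points bounded away from $\{y_N=0\}$. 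Undoing $\Phi$, using $\|f(v)\|_{L^\infty}\le |f(0)| + \|f'\|_{L^\infty([-M,M])} M$, and gathering the explicit factors yields the stated bound.

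The main obstacle is the explicit bookkeeping in the Campanato step: invoking the standard boundary $C^{1,\alpha}$ Schauder estimate as a black box would bury the $\|\nabla h\|_{C^{0,\sigma}}$ dependence in the overall constant, whereas the theorem requires $C(r,\widetilde R,\widetilde\tau,\alpha,N)$ to be independent of $\nabla h$. One must therefore re-run the frozen-coefficient comparison while separately tracking the contributions of $\|A\|_{C^{0,\sigma}}\le C(1+\|\nabla h\|_{C^{0,\sigma}})^2$ (from the transformed equation) and of the prescribed boundary gradient datum, which combine precisely into the factor $|c|(1+\|\nabla h\|_{C^{0,\sigma}})^2 + 1 + \|f'\|_{L^\infty([-M,M])}$ multiplying $(1+\|\nabla u\|_{L^\infty})$.
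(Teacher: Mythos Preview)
Your plan is a legitimate alternative, but it is \emph{not} what the paper does, and the two routes differ in a way that affects exactly the bookkeeping you flag as the main obstacle.

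The paper never flattens. Instead it differentiates the equation: setting $w_i=\partial_i u$, one has $-\Delta w_i = f'(u)\,w_i$ in $\mathfrak{C}^h(0',\widetilde R,\widetilde\tau)$ with the \emph{pure Laplacian} as principal part, and the overdetermined pair $u=0$, $\partial_\eta u=c$ on $\partial\omega$ becomes the single Dirichlet condition $w_i=c\,\eta_i$, whose $C^{0,\sigma}$ seminorm is controlled by $|c|(1+\|\nabla h\|_{C^{0,\sigma}})^2$. The paper then proves H\"older continuity of each $w_i$ directly on the epigraph by a distance-to-boundary case split: for points far from $\partial\omega$ it uses an elementary interior estimate (Lemma~\ref{Brandt-weak}), and for points near $\partial\omega$ it invokes a De Giorgi--Nash--Moser boundary oscillation bound (Proposition~\ref{prop0.1}, i.e.\ Gilbarg--Trudinger Theorem~8.27) which needs only the uniform exterior cone coming from the Lipschitz bound $\|\nabla h\|_{L^\infty}\le\mathcal E$. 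Because the operator for $w_i$ has constant coefficients, the \emph{only} place $\|\nabla h\|_{C^{0,\sigma}}$ enters is through the boundary datum $c\,\eta_i$, and this is precisely why the factor $|c|(1+\|\nabla h\|_{C^{0,\sigma}})^2$ appears multiplying $1$ rather than $\|\nabla u\|_{L^\infty}$.

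By contrast, your flattening puts $\nabla h$ into the coefficient matrix $A$, so the frozen-coefficient perturbation term in the Campanato iteration is of size $[A]_{C^{0,\sigma}}\|\nabla v\|_{L^\infty}\lesssim (1+\|\nabla h\|_{C^{0,\sigma}})^2\|\nabla u\|_{L^\infty}$; this produces $(1+\|\nabla h\|_{C^{0,\sigma}})^2$ as a free-standing additive term in the bracket, not attached to $|c|$. That is still adequate for the application in Theorem~\ref{th_overdet-f(0)<0} (since $\|\nabla h\|_{C^{0,\sigma}}\le\mathcal E$ can be absorbed into $C$ via $\alpha(\mathcal E)$), but it does not reproduce the precise structural form of \eqref{estimeéelliptique1}. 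Note also that your step ``subtract the affine lift $y_N(-c/\sqrt{1+|\nabla h|^2})$ to homogenize the normal derivative'' is superfluous: once $v=0$ on $\{y_N=0\}$, the Dirichlet Campanato scheme already delivers $C^{1,\alpha}$ regularity without any reference to the Neumann datum.
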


\medskip

The proof of the above theorem  is based on the following general result.

\medskip

\begin{prop}\label{prop0.1}
Let $\omega$ be the epigraph of a continuous function $h: \R^{N-1} \to \R$ and suppose that $\omega$ satisfies a uniform exterior cone condition (with reference cone $V$).\footnote{Let us recall that an open set $\omega \subset \R^N$ (not necessarily an epigraph) satisfies a \textit{uniform exterior cone condition} if for any $x_{0} \in \partial \omega$ there exists a finite right circular cone $V_{x_{0}},$ with vertex $x_{0},$ such that 
$\overline{\omega} \cap V_{x_{0}}=\{x_0 \}$ and the cones $V_{x_{0}}$ are all congruent to some fixed cone $V.$
The cone $V$ is called the \textit{reference cone}.} \\
Let $\widetilde{R}>0$, $ \widetilde{\tau} > \max \left\lbrace \widetilde{R} , \displaystyle\sup_{\overline{B'(0',\widetilde{R})}} \vert h \vert\right\rbrace $, $ f \in L^{N}(\mathfrak{C}^{h}(0',\widetilde{R},\widetilde{\tau}))$, 
$\Psi \in C^{0,\beta}(\dr \omega \cap \widehat{\mathfrak{C}^{h}}(0',\widetilde{R},\widetilde{\tau})) $ and 
	$v \in H^{1}(\mathfrak{C}^{h}(0',\widetilde{R},\widetilde{\tau})) \cap C^0(\overline{\mathfrak{C}^{h}(0',\widetilde{R},\widetilde{\tau})})$  satisfy
	\begin{equation}\label{equationgénérale}
		\left\{
		\begin{array}{ccc}
			-\Delta v=f  & \text{in} & \mathcal{D}'(\mathfrak{C}^{h}(0',\widetilde{R},\widetilde{\tau})),\\
			v=\Psi & \text{on} & \dr \omega \cap \widehat{\mathfrak{C}^{h}}(0',\widetilde{R},\widetilde{\tau}),\\
			
		\end{array}
		\right.
	\end{equation}     
	Then, there are constants $C=C(N,V,\widetilde{R},\beta)>0$ and $\alpha=\alpha(N,V,\beta) \in (0,1)$ such that for any $r>0$ and for any $x_0 \in \partial \omega \cap  \left( \overline{B'(0',\widetilde{R}/2)} \times \R \right) $,
	\begin{equation}\label{estimée_oscillation}
		\underset{\widehat{\mathfrak{C}^{h}}(0',\widetilde{R},\widetilde{\tau}) \cap B(x_{0},r)}{\text{osc}} v\leq C 
		( \left[ \Psi\right]_{ C^{0,\beta}(\dr \omega \cap \widehat{\mathfrak{C}^{h}}(0',\widetilde{R},\widetilde{\tau})) }+\|v\|_{L^{\infty}(\mathfrak{C}^{h}(0',\widetilde{R},\widetilde{\tau}))}+\| f\|_{L^{N}(\mathfrak{C}^{h}(0',\widetilde{R},\widetilde{\tau}))}) r^{\alpha}.
	\end{equation} 
\end{prop}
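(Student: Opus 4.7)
The plan is to prove the boundary oscillation estimate \eqref{estimée_oscillation} by the classical scheme for solutions of $-\Delta v=f$ with $L^{N}$ right-hand side on domains satisfying an exterior cone condition: I combine a barrier built from the reference cone $V$ with the Alexandrov--Bakelman--Pucci (ABP) maximum principle, and conclude by a Campanato-type dyadic iteration. Throughout, I set
\begin{equation*}
K:=[\Psi]_{C^{0,\beta}(\partial\omega\cap\widehat{\mathfrak{C}^{h}}(0',\widetilde R,\widetilde\tau))}+\|v\|_{L^{\infty}(\mathfrak{C}^{h}(0',\widetilde R,\widetilde\tau))}+\|f\|_{L^{N}(\mathfrak{C}^{h}(0',\widetilde R,\widetilde\tau))},
\end{equation*}
which is the scale appearing on the right-hand side of \eqref{estimée_oscillation}.

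First I would build a universal boundary barrier. Let $\psi\ge 0$ be the first Dirichlet eigenfunction of the spherical Laplacian on $S^{N-1}\setminus V$ and let $\alpha_{0}=\alpha_{0}(N,V)\in(0,1)$ be the exponent making $\Phi(x):=|x|^{\alpha_{0}}\psi(x/|x|)$ harmonic and positive in $\R^{N}\setminus V$ and zero on $\partial V$. Since each exterior cone $V_{x_{0}}$ is a rigid motion of $V$, the corresponding rotated--translated $\Phi_{x_{0}}$ is harmonic in $\omega\cap B(x_{0},1)$, vanishes at $x_{0}$, and satisfies $c_{V}|x-x_{0}|^{\alpha_{0}}\le\Phi_{x_{0}}(x)\le C_{V}|x-x_{0}|^{\alpha_{0}}$ with $c_{V},C_{V}>0$ depending only on $(N,V)$. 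This uniformity across boundary points is the sole place where the uniform exterior cone hypothesis enters the argument, and it is the heart of the matter.

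Fix now $x_{0}\in\partial\omega\cap(\overline{B'(0',\widetilde R/2)}\times\R)$ and $r\in(0,r_{0})$ small enough that $\omega\cap B(x_{0},2r)\subset\mathfrak{C}^{h}(0',\widetilde R,\widetilde\tau)$; such a radius $r_{0}=r_{0}(\widetilde R)$ exists by the geometry of the cylinder. On $\partial\omega\cap B(x_{0},r)$ one has $|v-\Psi(x_{0})|\le Kr^{\beta}$ by the H\"older assumption on $\Psi$, while on $\omega\cap\partial B(x_{0},r)$ the trivial bound $\mathrm{osc}_{\omega\cap B(x_{0},r)}v$ applies. Comparing $\pm(v-\Psi(x_{0}))$ on $\partial(\omega\cap B(x_{0},r))$ with the harmonic majorant
\begin{equation*}
Kr^{\beta}+c_{V}^{-1}r^{-\alpha_{0}}\bigl(\mathrm{osc}_{\omega\cap B(x_{0},r)}v\bigr)\Phi_{x_{0}}(\cdot),
\end{equation*}
and applying the ABP maximum principle to absorb the source term (which contributes an extra $C_{N}r\|f\|_{L^{N}}\le C_{N}rK$), I obtain, at points $|x-x_{0}|\le r/2$ and using $\Phi_{x_{0}}(x)\le C_{V}(r/2)^{\alpha_{0}}$, the one-step oscillation decay
\begin{equation*}
\mathrm{osc}_{\omega\cap B(x_{0},r/2)}\,v\le\theta\,\mathrm{osc}_{\omega\cap B(x_{0},r)}\,v+CKr^{\min(\alpha_{0},\beta)}
\end{equation*}
with $\theta=\theta(N,V)\in(0,1)$ and $C=C(N,V)$.

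Iterating this decay along $r_{k}=2^{-k}r_{0}$ via the standard elementary lemma (sequences $(a_{k})$ with $a_{k+1}\le\theta a_{k}+C\lambda^{k}$, $\theta,\lambda\in(0,1)$, satisfy $a_{k}\le C'\max(\theta,\lambda)^{k}$), I get $\mathrm{osc}_{\omega\cap B(x_{0},r)}v\le C_{*}Kr^{\alpha}$ for every $r\in(0,r_{0})$, with $\alpha=\alpha(N,V,\beta)\in(0,1)$; for $r\ge r_{0}$ the estimate is trivial after absorbing $r_{0}^{-\alpha}$ into the constant, since $\mathrm{osc}\,v\le 2\|v\|_{L^{\infty}}\le 2K$. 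The bound then extends from $\omega\cap B(x_{0},r)$ to $\widehat{\mathfrak{C}^{h}}(0',\widetilde R,\widetilde\tau)\cap B(x_{0},r)$ because the only additional points lie on $\partial\omega$, where $|v-\Psi(x_{0})|\le Kr^{\beta}$. The main obstacle---packaged entirely into the first step---is the construction of a barrier whose exponent $\alpha_{0}$ and constants $c_{V},C_{V}$ are independent of the chosen boundary point; this is precisely what the uniform exterior cone condition delivers through the congruence of all $V_{x_{0}}$ to the single reference cone $V$.
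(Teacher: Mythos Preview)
Your argument is correct and follows the classical barrier--iteration scheme that underlies boundary H\"older regularity on domains with a uniform exterior cone condition. The paper, however, does not carry out this construction: it simply invokes Theorem 8.27 of Gilbarg--Trudinger, which already packages the barrier, the comparison, and the iteration into a single oscillation estimate of the form
\[
\underset{\Omega\cap B(x_0,r)}{\mathrm{osc}}\,v\le C\Bigl\{r^{\theta}\bigl(R_0^{-\theta}\|v\|_{L^\infty}+\|f\|_{L^N}\bigr)+\sigma(\sqrt{rR_0})\Bigr\},
\]
with $\theta=\theta(N,V)$ and $R_0=\min\{1,\widetilde R/2\}$. The paper then bounds $\sigma(\sqrt{rR_0})=\mathrm{osc}_{\partial\omega\cap B(x_0,\sqrt{rR_0})}\Psi$ by $2^\beta[\Psi]_{C^{0,\beta}}R_0^{\beta/2}r^{\beta/2}$ using the H\"older hypothesis on $\Psi$, arriving at $\alpha=\min(\theta,\beta/2)$; for $r\ge R_0$ the estimate is trivial, exactly as you handle it.

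So the two proofs differ only in that you re-derive the Gilbarg--Trudinger theorem rather than quote it. Your version has the advantage of being self-contained and of making explicit the single place the cone condition is used (the congruence of the $V_{x_0}$'s giving uniform barrier constants $c_V,C_V$); the paper's version is much shorter. One minor point: in your iteration lemma, if $\theta$ happens to equal $2^{-\min(\alpha_0,\beta)}$ you pick up a logarithmic factor, so you should take the final exponent strictly smaller than both $\alpha_0$ and $\beta$ and $-\log_2\theta$; this is harmless since the statement only claims existence of \emph{some} $\alpha\in(0,1)$.
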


\medskip

\begin{proof}
Let $x,y \in \widehat{\mathfrak{C}^{h}}(0',\widetilde{R},\widetilde{\tau}) \cap B(x_{0},r)$.

\smallskip

If $r \geq \min \left\lbrace 1, \frac{\widetilde{R}}{2}\right\rbrace =: R_0>0$, then 
\begin{equation}\label{inegalite_r_grand}
	|v(x)-v(y)|\leq 2  \|v\|_{L^{\infty}(\mathfrak{C}^{h}(0',\widetilde{R},\widetilde{\tau}))} \leq 
	2 R_0^{-\gamma}  \|v\|_{L^{\infty}(\mathfrak{C}^{h}(0',\widetilde{R},\widetilde{\tau}))}r^{\gamma},
\end{equation}
for any $\gamma \in (0,1)$.\\

If $r < R_0,$ we apply Theorem 8.27 of \cite{gt} with $\Omega=\mathfrak{C}^{h}(0',\widetilde{R},\widetilde{\tau})$, 
$q=2N$ and $g=f$. Hence, there are positive constants $C'=C'(N,V)$ 
and $\theta=\theta(N,V) $ such that
\begin{equation*}
	\underset{\Omega \cap B(x_{0},r)}{\text{osc}} v\leq C' \left\lbrace r^{\theta}(R_{0}^{-\theta} \|v\|_{L^{\infty}(\Omega)}+ \|f\|_{L^{N}(\Omega)})+ \sigma(\sqrt{rR_{0}})\right\rbrace ,
\end{equation*}
where $\sigma(\sqrt{rR_{0}})=\underset{\partial \Omega \cap B(x_{0},\sqrt{rR_{0}})}{\text{osc}}v=\underset{\partial\omega \cap B(x_{0},\sqrt{rR_{0}})}{\text{osc}} \Psi \leq 2^{\beta} \left[ \Psi\right]_{C^{0,\beta}(\dr \omega \cap \widehat{\mathfrak{C}^{h}}(0',\widetilde{R},\widetilde{\tau})) }R_{0}^{\beta/2}r^{\beta/2}$.\\
Therefore, 
\begin{equation}\label{inegalite_r_petit}
	\begin{split}
& \underset{\Omega \cap B(x_{0},r)}{\text{osc}} v \leq C' (2^{\beta} + R_{0}^{-\theta}) (\left[ \Psi\right]_{C^{0,\beta}(\dr \omega \cap \widehat{\mathfrak{C}^{h}}(0',\widetilde{R},\widetilde{\tau})) }+ \|v\|_{L^{\infty}(\Omega)} +\|f\|_{L^{N}(\Omega)}) r^{\alpha},
	\end{split}
\end{equation}
where $ \alpha=\min(\theta, \frac{\beta}{2})$. 

The desired inequality \eqref{estimée_oscillation} then follows from \eqref{inegalite_r_grand} with $\gamma = \alpha$ and \eqref{inegalite_r_petit}. \end{proof}

\medskip

Now we are ready to prove Theorem \ref{estimé_c_1}.

\medskip

\noindent{\textit {Proof of Theorem \ref{estimé_c_1}.}}
	First, we observe that $w_{i}:=\frac{\partial u}{\partial x_i} \in H^{1}(\mathfrak{C}^{h}(0',\widetilde{R},\widetilde{\tau})) \cap C^0(\overline{\mathfrak{C}^{h}(0',\widetilde{R},\widetilde{\tau})})$ solves
	\begin{equation*}
			\left\{
		\begin{array}{ccc}
			-\Delta w_{i}=f'(u)w_{i}  & \text{in} & \mathcal{D}'(\mathfrak{C}^{h}(0',\widetilde{R},\widetilde{\tau})),\\
			w_{i}=c \eta_i & \text{on} & \dr \omega \cap \widehat{\mathfrak{C}^{h}}(0',\widetilde{R},\widetilde{\tau}),
		\end{array}
		\right.
	\end{equation*}
where $ \eta $ is the outward unit normal at $\partial \omega$. Since 
\begin{equation*}
	\Psi_i (x') := \eta_{i}(x', h(x'))=\left\{
	\begin{array}{ccc}
		 \frac{\partial_{i}h(x')}{\sqrt{1+|\nabla h(x')|^{2}}} & \text{if} & i \in \{1,\cdots, N-1\}, \\
		\frac{-1}{\sqrt{1+|\nabla h(x')|^{2}}} & \text{if} & i=N,
	\end{array}
	\right.
\end{equation*}
for any $x' \in \R^{N-1}$, it is easily seen that $\Psi_{i} \in C^{0,\sigma}(B'(0',\widetilde{R}))$ and

\begin{equation*}
	\left[ \Psi_{i}\right]_{C^{0,\sigma}(B'(0',\widetilde{R}))} \leq \left( 1+\|\nabla h\|_{L^{\infty}(\R^{N-1})} \right) 
	\left[ \nabla h \right]_{C^{0,\sigma}(\R^{N-1})} .
\end{equation*}
Hence,
\begin{equation}\label{semi-norme-w_i}
	\left[ {w_i}_{\vert \dr \omega \cap \widehat{\mathfrak{C}^{h}}(0',\widetilde{R},\widetilde{\tau})}\right] _{C^{0,\sigma}(
	\dr \omega \cap \widehat{\mathfrak{C}^{h}}(0',\widetilde{R},\widetilde{\tau}))} \leq |c| \left( 1+\|\nabla h\|_{C^{0,\sigma}(\R^{N-1})} \right)^2 .
\end{equation}

\smallskip 
 
Moreover, since $h$ is globally Lipschitz-continuous on $\R^{N-1}$, the epigraph $\omega$ satisfies a uniform exterior cone condition with reference cone $V$ depending only on $\|\nabla h \|_{L^{\infty}(\R^{N-1})}$, hence,  only on 
$\mathcal{E}$.\footnote{\, more precisely, we can choose the same reference cone $V=V(\mathcal{E})$, for any globally Lipschitz-continuous function  $h$ such that $\|\nabla h \|_{L^{\infty}(\R^{N-1})} \leq \mathcal{E}$.}  \\

Set $\Omega=\mathfrak{C}^{h}(0',\widetilde{R},\widetilde{\tau})$, $R_{1}=\min(\frac{\widetilde{R}}{2}-r,\frac{\widetilde{\tau}}{2})$, $M=\|u\|_{L^{\infty}(\Omega)}$ and pick $x,y  \in \mathfrak{C}^{h}(0',r,t)$ with $x\neq y $. \\

\textit{1.} If $|x-y|\geq \dfrac{R_{1}}{4}$ then, for any $\gamma \in (0,1)$, we have
\begin{equation}\label{estimée_1}
	\begin{split}
		|w_{i}(x)-w_{i}(y)| & \leq 2\|\nabla u\|_{L^{\infty}(\Omega)} =  2 \|\nabla u\|_{L^{\infty}(\Omega)} \Big(\frac{4}{R_{1}}\Big)^{\gamma}\Big(\frac{R_{1}}{4}\Big)^{\gamma} \\
		& \leq 2  \Big(\frac{4}{R_{1}}\Big)^{\gamma}  \|\nabla u\|_{L^{\infty}(\Omega)} |x-y|^{\gamma} : = M_1  
		\|\nabla u\|_{L^{\infty}(\Omega)} |x-y|^{\gamma}. 
	\end{split}
\end{equation}
 
\textit{2.} If $|x-y|< \dfrac{R_{1}}{4}$, then we consider the set  
$$T: = \left\lbrace x=(x',x_{N})\in \R^{N} \, : \, x'\in \overline{B'(0',\widetilde{R}/2)} \quad  \text{and} \quad  x_{N}=h(x') \right\rbrace $$
and, by observing that $x$ and $y$ play a symmetric role,  we distinguish (only) three cases. 

\medskip

\textit{Case 2.1} : $d(y,T)>\frac{R_{1}}{2}$.

\noindent In this case we have $\overline{B(y,\frac{R_{1}}{2})}\subset \mathfrak{C}^{h}(0',\widetilde{R},\widetilde{\tau}).$ To see this, we first observe that $\overline{B(y,\frac{R_{1}}{2})}\subset \omega$, thanks to the assumption $d(y,T)>\frac{R_{1}}{2}$ and by the definition of $R_1$, and thus, for any  $z \in \overline{B(y,\frac{R_{1}}{2})}$, we have 
\begin{equation*}
	h(z') < z_{N}=z_{N}-y_{N}+y_{N}< z_{N}-y_{N}+t \leq \frac{R_{1}}{2}+\frac{3\widetilde{\tau}}{4} < \frac{\widetilde{\tau}}{4}+ \frac{3\widetilde{\tau}}{4} = \widetilde{\tau},
\end{equation*}
and 
\begin{equation*}
	|z'|\leq |z'-y'|+|y'| \leq \frac{R_{1}}{2}+r< \frac{\widetilde{R}}{2}-r+r<\widetilde{R}. 
\end{equation*}
Now, by applying  Lemma \ref{Brandt-weak} with $\delta=\frac{R_{1}}{2}$ and $\gamma \in (0,1)$, we deduce
\begin{equation}\label{estimée_2}
	\begin{split}
		|w_{i}(x)-w_{i}(y)| &\leq \dfrac{\sqrt{N}}{2^{1-\gamma}}\Big(2N\|\nabla u\|_{L^{\infty}(\Omega)} 
		\Big(\frac{R_{1}}{2}\Big)^{-\gamma}+\|f'\|_{L^{\infty}([-M,M])}\|\nabla u\|_{L^{\infty}(\Omega)}\Big(\frac{R_{1}}{2}\Big)^{2-\gamma}\Big)
		|x-y|^{\gamma}\\
		&\leq M_{2}(1+\|f'\|_{L^{\infty}([-M,M])})\|\nabla u\|_{L^{\infty}(\Omega)}  |x-y|^{\gamma}, 
	\end{split}
\end{equation}
where $M_{2}=\dfrac{\sqrt{N}}{2^{1-\gamma}}\Big(2N \Big(\dfrac{R_{1}}{2}\Big)^{-\gamma}+\Big(\dfrac{R_{1}}{2}\Big)^{2-\gamma}\Big)$.\\

\textit{Case 2.2} : $d(x,T)\leq d(y,T) \leq \dfrac{R_{1}}{2}$ and $|x-y| \geq \frac{1}{4}d(y,T).$\\

Since $T$ is a compact set, there exists  $y_{0}=(y_{0}',h(y_{0}')) \in T$ such that $d(y,T)=|y-y_{0}|$. \\Therefore 
\begin{equation*}
	x,y \in B(y_{0},6|x-y|),
\end{equation*}
since 
\begin{align*}
	&|x-y_{0}|\leq |x-y|+|y-y_{0}|\leq  |x-y|+4|x-y|=5|x-y|,\\
	\intertext{and}
	&|y-y_{0}|=d(y,T)\leq 4|x-y|.
\end{align*}
Now, we apply Proposition \ref{prop0.1} with $r=6|x-y|$, $v=w_{i}$, $\Psi=c \eta_{i}$ and $x_{0}=y_{0}$, thus there exist constants $C=C(N,\mathcal{E},\widetilde{R},\sigma)>0$ and $\alpha=\alpha(N,\mathcal{E},\sigma) \in (0,1)$ such that  
\begin{equation}\label{estimé3}
	\begin{split}
		&|w_{i}(x)-w_{i}(y)| \leq \underset{\Omega \cap B(x_{0},r)}{\text{osc}} w_i \\
		& \leq 6^{\alpha} C \left( \left[{w_i}_{\vert \dr \omega \cap \widehat{\mathfrak{C}^{h}}(0',\widetilde{R},\widetilde{\tau})}\right] _{C^{0,\sigma}(\dr \omega \cap \widehat{\mathfrak{C}^{h}}(0',\widetilde{R},\widetilde{\tau}))}
 + \| w_{i} \|_{L^{\infty}(\Omega)} + \| f'(u)w_{i}\|_{L^{N}(\Omega)} \right) |x-y|^{\alpha} \\
         & \leq 6^{\alpha} C \left( |c| \left( 1+\|\nabla h\|_{C^{0,\sigma}(\R^{N-1})} \right)^2 + \|\nabla u\|_{L^{\infty}(\Omega)} + \| f'(u)w_{i}\|_{L^{N}(\Omega)} \right) |x-y|^{\alpha}, \\
	\end{split}
\end{equation}
where in the latter we have used \eqref{semi-norme-w_i}. 

Since 
\begin{equation*}
		\|  f'(u) w_{i} \|_{L^{N}(\Omega)}  \leq \|f'\|_{L^{\infty}([-M,M])}  \|\nabla u\|_{L^{\infty}(\Omega)}  \left( 2\widetilde{\tau}\mathcal{L}^{N-1}(B'(0',1)) \widetilde{R}^{N-1} \right)^{\frac{1}{N}}, 
\end{equation*}
where $\mathcal{L}^{N-1}(B'(0',1))$ denotes the measure of the unit ball of $\R^{N-1}$, we see that 
\begin{equation}\label{estimé3-bis}
		|w_{i}(x)-w_{i}(y)| \leq M_{3} M_3'
         (1 + \|\nabla u\|_{L^{\infty}(\Omega)})  |x-y|^{\alpha},
\end{equation}
where 
\begin{equation}\label{const-estimé3-bis}
		\left\{
		\begin{array}{lcc}
		M_{3}=6^{\alpha}C \left[1 + \left( 2\widetilde{\tau}\mathcal{L}^{N-1}(B'(0',1)) \widetilde{R}^{N-1} \right)^{\frac{1}{N}} \right],\\
		M_3' = \left[ |c| \left( 1+ \|\nabla h\|_{C^{0,\sigma}(\R^{N-1})} \right)^2  + 1 + \|f'\|_{L^{\infty}([-M,M])} \right] .
		\end{array}
		\right.
	\end{equation}     

\textit{Case 2.3}: $d(x,T)\leq d(y,T) \leq \dfrac{R_{1}}{2}$ and $ |x-y| < \frac{1}{4} d(y,T)$. \\ 

Let $y_{0} \in T$ such that $|y- y_{0}|=d(y,T)$. The function $v_{i}:=w_{i} - w_i(y_{0})$ satisfies
\begin{equation*}
	-\Delta v_{i} =f'(u)w_{i}.
\end{equation*}

By applying Lemma \ref{Brandt-weak} with $\delta=\frac{d(y,T)}{2}$ and $\gamma= \alpha$, we have 
\begin{equation}\label{brandt-split}
	\begin{split}
		|w_{i}(x)-w_{i}(y)|=|v_{i}(x)-v_{i}(y)| &\leq \frac{\sqrt{N}}{2^{1-\alpha}}\Big(\|f'\|_{L^{\infty}([-M,M])} \|\nabla u\|_{L^{\infty}(\Omega)} \Big(\frac{d(y,T)}{2}\Big)^{2-\alpha}\\
		&+2N \|v_{i}\|_{L^{\infty}(\overline{ B(y,\frac{d(y,T)}{2})})} \Big(\frac{d(y,T)}{2}\Big)^{-\alpha}\Big)  |x-y|^{\alpha} .
	\end{split}
\end{equation}
Now, we want to estimate  $\|v_{i}\|_{L^{\infty}(\overline{ B(y,\frac{d(y,T)}{2})})} $.  Let $ z \in \overline{ B(y,\frac{d(y,T)}{2})}$ and $y_{0}=(y'_{0},h(y'_{0}))\in T$ such that $d(y,T)=|y_{0}-y|.$ Then $z \in \Omega \cap B(y_{0},2d(y,T))$
since
\begin{equation*}
	|z-y_{0}|=|z-y+y-y_{0}|\leq |z-y|+|y-y_{0}| \leq \frac{3}{2}d(y,T) < 2d(y,T).
\end{equation*}
Thus we can apply Proposition \ref{prop0.1} with $r=2d(y,T)$, $v=v_{i}$, $\Psi=c(\eta_{i} - \eta_i(y_{0}))$ and $x_{0}=y_{0}$ to get
\begin{equation*}
	\begin{split}
& |v_{i}(z)|= |v_{i}(z) - v_{i}(y_0)| \\ 
& \leq 2^{\alpha} C  \left( \left[ {v_i}_{\vert \dr \omega \cap \widehat{\mathfrak{C}^{h}}
(0',\widetilde{R},\widetilde{\tau})} \right]_{C^{0,\sigma}(\dr \omega \cap \widehat{\mathfrak{C}^{h}}(0',\widetilde{R},\widetilde{\tau}))} + \| v_{i} \|_{L^{\infty}(\Omega)} +  \| f'(u)w_{i}\|_{L^{N}(\Omega)}\right)  
d(y,T)^{\alpha} \\
& \leq 2^{\alpha} C  \left( \left[ {w_i}_{\vert \dr \omega \cap \widehat{\mathfrak{C}^{h}}
(0',\widetilde{R},\widetilde{\tau})} \right]_{C^{0,\sigma}(\dr \omega \cap \widehat{\mathfrak{C}^{h}}(0',\widetilde{R},\widetilde{\tau}))} + 2 \| w_{i} \|_{L^{\infty}(\Omega)} +  \| f'(u)w_{i}\|_{L^{N}(\Omega)}\right)  
d(y,T)^{\alpha} \\
& \leq 2^{\alpha +1 } C\left( \left[ {w_i}_{\vert \dr \omega \cap \widehat{\mathfrak{C}^{h}}
(0',\widetilde{R},\widetilde{\tau})} \right]_{C^{0,\sigma}(\dr \omega \cap \widehat{\mathfrak{C}^{h}}(0',\widetilde{R},\widetilde{\tau}))} +  \| w_{i} \|_{L^{\infty}(\Omega)} +  \| f'(u)w_{i}\|_{L^{N}(\Omega)}\right)  
d(y,T)^{\alpha} \\
& \leq 3^{-\alpha} 2 M_3\left[ |c| \left( 1+ \|\nabla h\|_{C^{0,\sigma}(\R^{N-1})} \right)^2  + 1 + 
		\|f'\|_{L^{\infty}([-M,M])} \right]  (1 + \|\nabla u\|_{L^{\infty}(\Omega)})  d(y,T)^{\alpha} \\
& \leq 3^{-\alpha} 2 M_3 M_3'  (1 + \|\nabla u\|_{L^{\infty}(\Omega)})  d(y,T)^{\alpha} .		
	\end{split}
\end{equation*}
The latter and \eqref{brandt-split} yield
\begin{equation}\label{estimée_4}
	\begin{split}
		&|w_{i}(x)-w_{i}(y)| \leq \frac{\sqrt{N}}{2^{1-\alpha}}\Big(\|f'\|_{L^{\infty}([-M,M])} \|\nabla u\|_{L^{\infty}(\Omega)} \Big(\frac{d(y,T)}{2}\Big)^{2-\alpha} \\
		& + 4 N 3^{-\alpha}  M_3 M_3'  (1 + \|\nabla u\|_{L^{\infty}(\Omega)})  d(y,T)^{\alpha} 
		\Big(\frac{d(y,T)}{2}\Big)^{-\alpha}\Big)  |x-y|^{\alpha} \\
		& \leq \frac{\sqrt{N}}{2^{1-\alpha}}\Big(\|f'\|_{L^{\infty}([-M,M])} \|\nabla u\|_{L^{\infty}(\Omega)} \Big(\frac{R_1}{4}\Big)^{2-\alpha} + 4N M_3 M_3'  (1 + \|\nabla u\|_{L^{\infty}(\Omega)}) \Big)  |x-y|^{\alpha} \\
& \leq \frac{\sqrt{N}}{2^{1-\alpha}}\Big(M_3' \Big(\frac{R_1}{4}\Big)^{2-\alpha} + 4N M_3 M_3' \Big) 
(1 + \|\nabla u\|_{L^{\infty}(\Omega)}) |x-y|^{\alpha} \\
& \leq M_4 \left[ |c| \left( 1+ \|\nabla h\|_{C^{0,\sigma}(\R^{N-1})} \right)^2  + 1 + \|f'\|_{L^{\infty}([-M,M])} \right]	
(1 + \|\nabla u\|_{L^{\infty}(\Omega)}) |x-y|^{\alpha}, 	
	\end{split}
\end{equation}
where $M_4 = \frac{\sqrt{N}}{2^{1-\alpha}}\Big(\Big(\frac{R_1}{4}\Big)^{2-\alpha} + 4N M_3\Big)$. 

\noindent The desired conclusion \eqref{estimeéelliptique1} then follows from \eqref{estimée_1}, \eqref{estimée_2} with $ \gamma = \alpha$, and \eqref{estimé3-bis}, \eqref{estimée_4} by taking  $C=\max(M_{1},M_{2},M_{3},M_{4})$.
\qed

\subsection{Some energy estimates}\label{Appendix-energy-est} \quad \\

\noindent In this subsection we employ the strategy developed in \cite{FVarma} (see Section 9 therein).  To this end we  recall that have denoted by $F$ the primitive of $f$ vanishing at $0$.

\begin{lem}\label{lem-energy-est-N-d}
Assume $c \in \R$ and $N \geq 2$. Let $u \in C^1(\overline{\Omega}) \cap C^2(\Omega)$ be a  bounded solution to \eqref{probleme} where $\Omega\subset \R^N$ is an epigraph with boundary of class $C^1$ and $f \in {Lip}_{loc}([0,+\infty))$.  

\noindent If $u$ is monotone, i.e., 
\begin{equation}\label{hyp-monot-N}
		\frac{\partial u}{\partial x_N}(x)>0 \qquad \forall x \in \Omega,
\end{equation}
then there exists a constant $C>0$, depending only on $ \mathfrak{c}$, $N$, $f$ and $ \Vert u \Vert_{L^{\infty}(\Omega)}$, such that, for any $ R>0$,  
	\begin{equation}\label{stima-energia-N-dim}
     \int_{B(0,R)\cap \Omega}  \frac{|\nabla u|^{2}}{2} + (c- F(u))  \leq C \left(\mathcal{H}^{N-1} (\partial (\Omega \cap B(0,R))) + \int_{B(0,R)\cap \Omega} \frac{|\nabla \overline{u} |^{2}}{2} + (c- F(\overline{u} ))\right),
	\end{equation}  
where\footnote{\, $\overline{u}$ is well-defined on $\R^N$ since $u$ is bounded and monotone on $\Omega$. Also recall that, by standard elliptic estimates, $\overline{u}$ solves the equation $ - \Delta \overline{u} = f(\overline{u}) $  on $\R^N$, and thus also on $\R^{N-1}$.} $ \, \overline{u}(x',x_N) =  \overline{u}(x') = : \lim_{x_N \to +\infty} u(x',x_N)$.
\end{lem}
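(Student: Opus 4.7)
The plan is to mimic the strategy of Section~9 of \cite{FVarma}. The heart of the argument is the fact that the monotonicity of $u$ makes it a local minimizer of the energy
$$\mathcal{E}_R(v):=\int_{B(0,R)\cap\Omega}\Big(\tfrac{1}{2}|\nabla v|^{2}+c-F(v)\Big)$$
in the class $\mathcal{A}$ of $v\in H^{1}(B(0,R)\cap\Omega)$ with $v=u$ on $\Omega\cap\partial B(0,R)$, $v=0$ on $\partial\Omega\cap B(0,R)$ and $0\le v\le\|u\|_{L^{\infty}(\Omega)}$; this is the Alberti--Ambrosio--Cabr\'e calibration, available because the vertical translates $\{u(\cdot+te_{N})\}_{t\ge 0}$ foliate $\Omega$ by solutions of $-\Delta=f(\cdot)$ and yield a null Lagrangian compatible with the Dirichlet datum on $\partial\Omega$.

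Assuming this, I would build a concrete competitor $w\in\mathcal{A}$. Let $\chi:=\psi\circ d_{\partial\Omega}$, where $d_{\partial\Omega}(x):=\mathrm{dist}(x,\partial\Omega)$ and $\psi\in C^{\infty}(\R)$ satisfies $\psi\equiv 0$ on $(-\infty,1]$, $\psi\equiv 1$ on $[2,+\infty)$, $0\le\psi\le1$, $|\psi'|\le 2$, so that $|\nabla\chi|\le 2$ uniformly (independently of $g$). Let also $\varphi\in C^{\infty}_{\rm c}(B(0,R))$ be a radial cut-off with $\varphi\equiv 1$ on $B(0,R-1)$, $0\le\varphi\le1$, $|\nabla\varphi|\le 2$. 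The monotonicity \eqref{hyp-monot-N} forces $u(x)\le\bar{u}(x')$ pointwise on $\Omega$, so the interpolation
$$w:=u+\varphi\chi\,(\bar{u}-u)$$
belongs to $\mathcal{A}$: it equals $0$ on $\partial\Omega\cap B(0,R)$ (since $\chi$ vanishes there), equals $u$ on $\Omega\cap\partial B(0,R)$ (since $\varphi$ vanishes there), and satisfies $u\le w\le\bar{u}\le\|u\|_{L^{\infty}(\Omega)}$. Moreover $w\equiv\bar{u}$ on the bulk region $G:=\{d_{\partial\Omega}\ge 2\}\cap B(0,R-1)$, while on the transition $T:=(B(0,R)\cap\Omega)\setminus G$ all of $|u|,|\bar{u}|,|\nabla u|,|\nabla\bar{u}|$ are bounded by a constant $C=C(\mathfrak{c},N,f,\|u\|_{L^{\infty}(\Omega)})$ (using Corollary~\ref{cor_gradient} for $\nabla u$ and standard interior regularity for the solution $\bar{u}$ of $-\Delta\bar{u}=f(\bar{u})$ on $\R^{N-1}$), whence $|\nabla w|^{2}+|c-F(w)|\le C$ on $T$.

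Combining the minimality with the direct estimate of $\mathcal{E}_R(w)$ yields
$$\mathcal{E}_R(u)\le\mathcal{E}_R(w)\le\int_{B(0,R)\cap\Omega}\Big(\tfrac{1}{2}|\nabla\bar{u}|^{2}+c-F(\bar{u})\Big)+C\,|T|.$$
To finish, $T$ is contained in the union of the spherical shell $(B(0,R)\setminus B(0,R-1))\cap\Omega$ and of the one-sided tube $\{d_{\partial\Omega}\le 2\}\cap B(0,R)$ about $\partial\Omega$; a tube/coarea argument bounds the volumes of these sets by a universal constant times $\mathcal{H}^{N-1}(\Omega\cap\partial B(0,R))$ and $\mathcal{H}^{N-1}(\partial\Omega\cap B(0,R))$ respectively (using that the orthogonal projection onto $\partial\Omega$ is $1$-Lipschitz on a sufficiently narrow one-sided tube around a $C^{1}$ hypersurface), so $|T|\le C\,\mathcal{H}^{N-1}(\partial(B(0,R)\cap\Omega))$ and the claim follows.

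The main obstacle is the first step: transporting the Alberti--Ambrosio--Cabr\'e minimality, which is classical for entire monotone solutions on $\R^{N}$, to a monotone solution on the epigraph $\Omega$ with prescribed Dirichlet datum on $\partial\Omega$. One must verify that $\{u(\cdot+te_{N})\}_{t\ge 0}$ exhausts the strip $\Omega\cap\{x_{N}<T\}$ along each vertical line, and that the resulting calibration accommodates the Dirichlet trace $v=0$ on $\partial\Omega\cap B(0,R)$. The geometric estimate on $|T|$ is a secondary but nontrivial issue: since $g$ is only assumed $C^{1}$ with no quantitative bound on $\nabla g$, the cut-off must be built from the intrinsic distance $d_{\partial\Omega}$ rather than from $x_{N}-g(x')$, precisely so that the Lipschitz bound $|\nabla\chi|\le C$ and the constant $C$ in the final inequality are independent of $R$ and of the Lipschitz character of $g$.
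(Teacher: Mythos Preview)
Your overall strategy is in the right spirit (and indeed the paper invokes Section~9 of \cite{FVarma}), but the paper's execution is considerably more direct and sidesteps precisely the two obstacles you flag.

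The paper does \emph{not} build a competitor with cut-offs, nor does it appeal to minimality in a class like your $\mathcal{A}$. Instead it compares $\mathcal{E}_{R,c}(u)$ directly with $\mathcal{E}_{R,c}(u^{T})$, where $u^{T}(x)=u(x',x_{N}+T)$; since $\Omega$ is an epigraph, $u^{T}$ is defined and solves the equation on all of $\Omega$, so
\[
\frac{d}{dt}\,\mathcal{E}_{R,c}(u^{t})
=\int_{\partial(B(0,R)\cap\Omega)}\bigl(\nabla u^{t}\cdot\nu\bigr)\,\partial_{N}u^{t}\,d\mathcal{H}^{N-1}.
\]
The key observation is that, at each boundary point, the $t$-integral telescopes: using $|\nabla u^{t}|\le M$ and $\partial_{N}u^{t}>0$,
\[
\Bigl|\int_{0}^{T}\bigl(\nabla u^{t}\cdot\nu\bigr)\,\partial_{N}u^{t}\,dt\Bigr|
\le M\int_{0}^{T}\partial_{N}u^{t}\,dt
= M\bigl(u^{T}(x)-u(x)\bigr)\le M^{2},
\]
which is \emph{independent of $T$}. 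Integrating over $\partial(B(0,R)\cap\Omega)$ and letting $T\to\infty$ gives the lemma immediately, with $C$ depending only on $M=M(\mathfrak{c},N,f,\|u\|_{L^{\infty}})$ via Corollary~\ref{cor_gradient}.

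Your route, by contrast, has a genuine gap in the ``secondary'' geometric step. The bound $|T|\le C\,\mathcal{H}^{N-1}(\partial(B(0,R)\cap\Omega))$ with $C$ independent of $g$ is not available for a general $C^{1}$ epigraph: the nearest-point projection onto $\partial\Omega$ is $1$-Lipschitz only on a tube whose width depends on the local geometry of $\partial\Omega$, and nothing in the hypotheses bounds that width from below. Using $d_{\partial\Omega}$ rather than $x_{N}-g(x')$ does not cure this; with a fixed width~$2$ the tube volume can fail to be controlled by the surface measure uniformly in $g$, and shrinking the width forces the cut-off gradient to blow up. (A similar issue afflicts the shell term: $|(B(0,R)\setminus B(0,R-1))\cap\Omega|$ need not be bounded by $C\,\mathcal{H}^{N-1}(\partial(B(0,R)\cap\Omega))$ with $C$ universal.) The paper's translate-and-telescope argument is designed exactly to avoid any such volumetric estimate: the only error is a genuine boundary integral, and its $T$-independence comes from monotonicity and boundedness of $u$, not from any tube geometry.
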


\begin{proof}  From Corollary \ref{cor_gradient} there exists a constant $M>0$, depending only on $ \mathfrak{c}$, $N$, $f$ and $ \Vert u \Vert_{L^{\infty}(\Omega)}$,  such that
	\begin{equation}\label{u_et_gradient_u}
		u(x)+|\nabla u(x)| \leq M \quad \text{for any} \quad x \in \overline{\Omega}.
	\end{equation}

As in \cite{FVarma}, for any $R>0$ we consider the energy
	\begin{equation*}
	\mathcal{E}_{R,c}(u):= \disp\int_{B(0,R) \cap \Omega} \frac{|\nabla u|^{2}}{2} + (c - F(u))
	\end{equation*}
and, for any $x=(x',x_N) \in \overline{\Omega}$ and $t \geq 0$, we define
	\begin{equation*}
		u^{t}(x',x_N):=u(x',x_N+t).
	\end{equation*}

We can therefore proceed as in Lemma 9.1. of \cite{FVarma} to get, for any $R,T>0$,  
\begin{equation*}
\mathcal{E}_{R,c}(u^T) - \mathcal{E}_{R,c}(u) \geq -2M^2 	 \int_{\partial (\Omega \cap B(0,R))} d \mathcal{H}^{N-1} = -2M^2 \mathcal{H}^{N-1} (\partial (\Omega \cap B(0,R))).
\end{equation*}
The desired conclusion then follows by letting $T \longrightarrow \infty$ in the latter inequality, for any fixed $R>0$. 
\end{proof}

\smallskip

When $N=3$ and $f$ is of class $C^1$, the above result can be better specified as follows : 

\smallskip

\begin{lem}\label{lem-energy-est-3-d}
Let $u \in C^1(\overline{\Omega}) \cap C^2(\Omega)$ be a  bounded solution to \eqref{probleme} where $\Omega\subset \R^3$ is an epigraph with boundary of class 
$C^1$. Let $f \in C^1([0,+\infty))$ be such that 
\begin{equation}\label{c(u)-bis}
		F( \sup u) = \sup_{t \in [0, \sup u]} F(t).
\end{equation}

\noindent If $u$ is monotone, i.e., 
\begin{equation}\label{hyp-monot-3}
		\frac{\partial u}{\partial x_3}(x)>0 \qquad \forall x \in \Omega,
\end{equation}
then there exists a constant $C>0$, depending only on $ \mathfrak{c}$, $f$ and $ \Vert u \Vert_{L^{\infty}(\Omega)}$, such that 
	\begin{equation}\label{stima-energia-3-dim}
		\disp\int_{B(0,R)\cap \Omega}|\nabla u|^{2} \leq C \left(  R^{2} +  \mathcal{H}^{2} (\partial (\Omega \cap B(0,R)) \right)  \qquad \forall  \,R>0.
	\end{equation}  
\end{lem}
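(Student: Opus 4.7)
The plan is to deduce the $R^2$ growth from the more general Lemma \ref{lem-energy-est-N-d} (applied with $N=3$) by a judicious choice of the free constant $c$, and then to reduce the resulting $\overline u$-energy term to a two-dimensional linear energy estimate on $\R^{N-1}=\R^2$.

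First, I would take $c := F(\sup u)$; by hypothesis \eqref{c(u)-bis} this equals $\sup_{[0,\sup u]}F$, so that $c-F(u)\geq 0$ on $\Omega$. Since $u$ is strictly increasing in $x_3$, the pointwise limit $\overline u(x')=\lim_{x_3\to+\infty}u(x',x_3)$ satisfies $\sup\overline u=\sup u$, hence $c-F(\overline u)\geq 0$ on $\R^2$ as well. Lemma \ref{lem-energy-est-N-d} then yields
\[
\tfrac12\int_{B(0,R)\cap\Omega}|\nabla u|^2 \leq \int_{B(0,R)\cap\Omega}\left(\tfrac{|\nabla u|^2}{2}+c-F(u)\right) \leq C\left(\mathcal{H}^{N-1}(\partial(\Omega\cap B(0,R)))+\mathcal A(R)\right),
\]
where $\mathcal A(R):=\int_{B(0,R)\cap\Omega}\bigl(\tfrac{|\nabla\overline u|^2}{2}+c-F(\overline u)\bigr)$, and so the proof reduces to showing $\mathcal A(R)\leq C' R^2$.

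Because $\overline u$ depends only on $x'=(x_1,x_2)$ and the $x_3$-fibre of $B(0,R)$ has length at most $2R$, Fubini gives
\[
\mathcal A(R)\leq 2R\int_{B'(0',R)}G(\overline u)\,dx', \qquad G(\overline u):=\tfrac{|\nabla\overline u|^2}{2}+c-F(\overline u).
\]
Since $\overline u\in C^2_b(\R^2)$ solves $-\Delta\overline u=f(\overline u)$ with $f\in C^1$ and $c\geq F(\overline u)$, Modica's pointwise inequality $|\nabla\overline u|^2\leq 2(c-F(\overline u))$ holds on $\R^2$, hence $G(\overline u)\in L^\infty(\R^2)$. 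What remains is the linear energy bound $\int_{B'(0',R)}G(\overline u)\,dx'\leq C'' R$.

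This last step is the main obstacle. The monotonicity assumption $\partial_{x_3}u>0$ exhibits $\partial_{x_3}u$ as a positive solution of the linearized operator $-\Delta - f'(u)$ on $\Omega$, so $u$ is stable; a translation-plus-cut-off argument transfers stability to $\overline u$ on $\R^{N-1}=\R^2$. For bounded stable solutions of $-\Delta\overline u=f(\overline u)$ on $\R^2$ satisfying Modica's inequality, an Ambrosio--Cabr\'e-type argument (testing the stability inequality against $|\nabla\overline u|\phi$ for a logarithmic cut-off $\phi$, or equivalently invoking the geometric Poincar\'e inequality of \cite{FVarma}) yields the desired linear growth. Combining this with the two previous steps produces the bound $\int_{B(0,R)\cap\Omega}|\nabla u|^2 \leq C\bigl(R^2+\mathcal{H}^{N-1}(\partial(\Omega\cap B(0,R)))\bigr)$ claimed in the lemma.
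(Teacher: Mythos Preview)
Your proposal is correct and follows the same overall architecture as the paper: apply Lemma~\ref{lem-energy-est-N-d} with $c=F(\sup u)$, drop the nonnegative potential term on the left, and reduce matters to a linear energy bound for the bounded stable limit profile $\overline u$ on $\R^2$. The paper's endgame differs slightly from yours. Rather than invoking Modica's inequality and a direct stability/cut-off argument, the paper appeals to the classification of bounded stable solutions on $\R^2$ (citing \cite{dan1}): $\overline u$ is either constant---in which case $\overline u\equiv\sup u$ and $G(\overline u)\equiv 0$---or one-dimensional and strictly monotone. In the 1D case the ODE first integral gives the \emph{identity} $\tfrac{(\overline u')^2}{2}+F(\overline u)=F(\sup\overline u)=c$, hence $G(\overline u)=(\overline u')^2$, and then $\int_{-R}^{R}(\overline u')^2\le\|\overline u'\|_\infty\int_{-R}^{R}|\overline u'|\le 2\|\overline u'\|_\infty\|\overline u\|_\infty$, a bound independent of $R$. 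Your geometric Poincar\'e/log-cutoff route would in fact re-derive this 1D classification (that is essentially how the result in \cite{dan1} is proved), after which the same ODE step is still needed to pass from ``one-dimensional'' to ``linear energy''; Modica's inequality plays no role. So the two approaches coincide once unpacked, but citing the classification and using the first integral is the most direct way to close the estimate.
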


\begin{proof} Set $c_u = \sup_{t \in [0, \sup u]} F(t)$. Then, from Lemma \ref{lem-energy-est-N-d}, applied with $c= c_u$ and $N=3$,  we have for any $R>0$,  
\begin{equation}\label{stima-energia-3-dim-part}
         \int_{B(0,R)\cap \Omega}  \frac{|\nabla u|^{2}}{2} + (c- F(u))  \leq C \left(\mathcal{H}^{2} (\partial (\Omega \cap B(0,R))) + \int_{B(0,R)\cap \Omega} \frac{|\nabla \overline{u} |^{2}}{2} + (c- F(\overline{u} ))\right).
	\end{equation}  
Now we estimate $\int_{B(0,R)\cap \Omega} \frac{|\nabla \overline{u} |^{2}}{2} + (c- F(\overline{u} )),$ for any $ R>0. $ To this end, we first observe that the monotonicity assumption \eqref{hyp-monot-3} implies that $u$ is a stable solution to $- \Delta u=f(u)$ in  $\Omega$. Hence, since $f$ is of class $C^1$, the limit profil $ \overline{u} $ is a classical \textit{stable solution} to
\begin{equation}\label{equationlimite}
		\left\{
		\begin{array}{ccc}
			-\Delta \overline{u} =f(\overline{u} )  & \text{in} & \R^{2},\\
			 0 < \overline{u} \leq  \sup u& \text{in} & \R^{2},\\
			 \sup_{\R^2} \overline{u}  = \sup_{\Omega} u .
		\end{array}
		\right.
	\end{equation}
It is well-known (cf. \cite{dan1}, see also \cite{fsv} for related results) that any bounded entire two-dimensional stable solution to $-\Delta v=f(v)$, with $f \in C^1$, is either constant or one-dimensional and strictly monotone with respect to some fixed direction. 

If $ \overline{u} $ is constant, we have $ \overline{u} \equiv \sup_{\R^2} \overline{u}  = \sup_{\Omega} u $ and so $F(\overline{u})  = F(\sup u ) = c_u$, by assumption. Hence, $\frac{|\nabla \overline{u} |^{2}}{2} + (c_u- F(\overline{u} )) \equiv 0$ and $\int_{B(0,R)\cap \Omega}\frac{|\nabla \overline{u} |^{2}}{2} + (c_u - F(\overline{u} )) =0$ for any $ R>0$.  
Then, \eqref{stima-energia-3-dim} follows immediately from \eqref{stima-energia-3-dim-part}. 

If $ \overline{u} $ is not constant, then after a rotation of the coordinates, we have 
	\begin{equation}\label{symdim1}
		\overline{u}(x_{1},x_{2})=\overline{u}(x_{1}) \quad \text{for any}\, (x_{1},x_{2})\in\R^{2}
	\end{equation}
and $\overline{u} $ is a strictly monotone solution to the ODE 
\begin{equation}\label{eqlimitedim1}
		-\overline{u}^{''}=f(\overline{u}) \quad \text{in} \quad \R. 
	\end{equation}
Therefore, 
\begin{equation}\label{egalité_dim_1}
	\frac{(\overline{u}^{'}(s))^{2}}{2}+F(\overline{u}(s))=F(\sup \overline{u}) \qquad \forall s \in \R, 
\end{equation}
and so, by \eqref{equationlimite}, we also have 
\begin{equation}\label{egalité_dim_1-bis}
\frac{(\overline{u}^{'}(s))^{2}}{2}+F(\overline{u}(s))=F(\sup \overline{u})=c_{u} \qquad \forall s \in \R.
\end{equation}
From the latter we infer that 
\begin{equation}\label{inegalité_dim_1-bis}
c_u- F(\overline{u}(s)) >0  \qquad \forall s \in \R,
\end{equation}
and 
\begin{equation}\label{inegalité_dim_1-tris}
\frac{(\overline{u}^{'}(s))^{2}}{2} = c_u- F(\overline{u}(s)) \leq F(\sup u) - \inf_{t \in [0, \sup u]} F(t) = C(f, \sup u) \qquad \forall s \in \R,
\end{equation}
hence, 
\begin{equation*}
\begin{split}
& \int_{B(0,R)\cap \Omega}\frac{|\nabla \overline{u} |^{2}}{2} + (c_u- F(\overline{u} )) 
\leq \int_{B(0,R)}\frac{|\nabla \overline{u} |^{2}}{2} + (c_u- F(\overline{u} )) \\ 
& \leq \disp\int_{(-R,R)^3} \frac{|\nabla \overline{u} |^{2}}{2} + (c_u- F(\overline{u} )) = R^2 \disp\int_{-R}^R \left[ \frac{ (\overline{u}'(x_{1}))^{2}}{2} + (c_u- F(\overline{u}(x_1))) \right] dx_{1} .
\end{split}
\end{equation*}	
From \eqref{egalité_dim_1-bis} we deduce that
\begin{equation}\label{inegalité_energie-dim_1-1}
\disp\int_{B(0,R)}\frac{|\nabla \overline{u} |^{2}}{2} + (c_u- F(\overline{u} )) \leq R^2 \disp\int_{-R}^R (\overline{u}'(x_{1}))^{2}  dx_{1} \leq 
R^2 \Vert \overline{u}' \Vert_{L^{\infty}(\R)} \disp\int_{-R}^R \vert \overline{u}'(x_{1}) \vert dx_{1} .
\end{equation}	
If $\overline{u}'>0$, then
\begin{equation}\label{inegalité_energie-dim_1-caso1}
\disp\int_{-R}^R \vert \overline{u}'(x_{1}) \vert dx_{1} = \disp\int_{-R}^R  \overline{u}'(x_{1}) dx_{1} = 
\overline{u}(R)- \overline{u}(-R) \leq 2  \Vert \overline{u} \Vert_{L^{\infty}(\R)} , 
\end{equation}	
if $\overline{u}'<0$, then
\begin{equation}\label{inegalité_energie-dim_1-caso2}
\disp\int_{-R}^R \vert \overline{u}'(x_{1}) \vert dx_{1} = \disp\int^{-R}_R  \overline{u}'(x_{1}) dx_{1} = 
\overline{u}(-R)- \overline{u}(R) \leq 2  \Vert \overline{u} \Vert_{L^{\infty}(\R)} .
\end{equation}	
In both cases we have 
\begin{equation}\label{inegalité_energie-finale}
\disp\int_{B(0,R)}\frac{|\nabla \overline{u} |^{2}}{2} + (c_u- F(\overline{u} )) \leq 2  \Vert \overline{u}' \Vert_{L^{\infty}(\R)}  \Vert \overline{u} \Vert_{L^{\infty}(\R)} R^2
\end{equation}	
and so, from \eqref{stima-energia-3-dim-part}, \eqref{c(u)-bis} and \eqref{inegalité_energie-finale}, we deduce that
\begin{equation*}
\begin{split}
    & \int_{B(0,R)\cap \Omega}\frac{|\nabla u |^{2}}{2} \leq  \int_{B(0,R)\cap \Omega}\frac{|\nabla u |^{2}}{2} + (c_u- F(u)) \\
    &\leq C \left(\mathcal{H}^{2} (\partial (\Omega \cap B(0,R))) + \int_{B(0,R)}\frac{|\nabla \overline{u} |^{2}}{2}+ (c_u- F(\overline{u} ))\right) \\
    & \leq C \left( \mathcal{H}^{2} (\partial (\Omega \cap B(0,R))) + 2  \Vert \overline{u}' \Vert_{L^{\infty}(\R)}  \Vert \overline{u} \Vert_{L^{\infty}(\R)} R^2 \right), 
\end{split}   
	\end{equation*}  
which, in view of  \eqref{inegalité_dim_1-tris}, concludes the proof. 
\end{proof}

\subsection{Appendix}\label{Appendix}

\begin{lem}\label{lemme_gradient_partie_dirichlet}
	Assume $ \alpha \in (0,1)$, $x_{0}\in \R^{N}$, $R>0$ and $f \in L^{\infty}(B(x_{0},R))$. \\
	Let $u \in C^{1,\alpha}(\overline{B(x_{0},R)})$ be a weak solution to  
	\begin{equation*}
		\left\{
		\begin{array}{ccc}
			-\Delta u=f & \text{in}& B(x_{0},R),\\
			u=0 & \text{on} & \dr B(x_{0},R).
		\end{array}
		\right.
	\end{equation*}
	Then, there exists a positive constant $C$, depending only on $N$,  such that 
	\begin{equation*}
		\|\nabla u\|_{L^{\infty}(B(x_{0},R))} \leq C  \|f\|_{L^{\infty}(B(x_{0},R))}R.
	\end{equation*}
\end{lem}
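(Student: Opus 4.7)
The plan is to combine a scaling reduction with a comparison barrier and a standard interior gradient estimate. First I would normalize by rescaling: setting $\tilde u(y) := u(x_0+Ry)$ for $y \in B(0,1)$, the function $\tilde u$ satisfies $-\Delta \tilde u = R^2 \tilde f$ in $B(0,1)$ with zero Dirichlet data, where $\tilde f(y) := f(x_0+Ry)$ obeys $\|\tilde f\|_{L^\infty(B(0,1))} = \|f\|_{L^\infty(B(x_0,R))}$, and $\nabla_y \tilde u(y) = R\, \nabla_x u(x_0+Ry)$. It therefore suffices to prove $\|\nabla \tilde u\|_{L^\infty(B(0,1))} \leq C R^2 \|\tilde f\|_{L^\infty(B(0,1))}$; dividing by $R$ returns the claimed estimate for $u$.

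Next I would record a pointwise barrier bound on the unit ball. The explicit radial function $w(y) := \frac{R^2\|\tilde f\|_\infty}{2N}(1-|y|^2)$ solves $-\Delta w = R^2 \|\tilde f\|_\infty$ in $B(0,1)$ with $w = 0$ on $\partial B(0,1)$, so the maximum principle applied to $w \pm \tilde u$ yields
\[
|\tilde u(y)| \leq w(y) \leq \frac{R^2 \|\tilde f\|_\infty}{N}\,(1-|y|),
\]
where I used $1-|y|^2 \leq 2(1-|y|)$. Hence $|\tilde u|$ decays at least linearly in the distance to $\partial B(0,1)$, with a controlled constant.

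Finally, for each $y \in B(0,1)$ set $d(y) := 1-|y|$. If $d(y) \geq 1/2$ then $B(y, 1/2) \subset B(0,1)$ and the standard interior gradient estimate (Theorem 3.9 of \cite{gt}) gives $|\nabla \tilde u(y)| \leq C\bigl(\|\tilde u\|_{L^\infty(B(0,1))} + R^2\|\tilde f\|_\infty\bigr) \leq C R^2 \|\tilde f\|_\infty$. If $d(y) < 1/2$, I apply the same interior estimate on the ball $B(y, d(y)/2) \subset B(0,1)$:
\[
|\nabla \tilde u(y)| \leq \frac{C}{d(y)} \sup_{B(y, d(y)/2)} |\tilde u| + C\, d(y)\, R^2 \|\tilde f\|_\infty,
\]
and the barrier bound controls $\sup_{B(y,d(y)/2)} |\tilde u|$ by $C R^2 \|\tilde f\|_\infty\, d(y)$, so the $d(y)^{-1}$ factor cancels and one again obtains $|\nabla \tilde u(y)| \leq C R^2 \|\tilde f\|_\infty$ uniformly in $y$. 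Undoing the rescaling concludes the proof.

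The only delicate point is to arrange the argument so that the singular factor $d(y)^{-1}$ produced by the interior gradient estimate near $\partial B(0,1)$ is exactly compensated by the linear-in-$d(y)$ decay of $|\tilde u|$ coming from the explicit barrier $w$; no nontrivial obstacle is expected beyond this bookkeeping.
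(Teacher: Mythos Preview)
Your proof is correct and shares the paper's rescaling to the unit ball and the same parabolic barrier, but diverges in how the gradient bound on $B_1$ is obtained. The paper quotes the global up-to-the-boundary estimate (Theorem~8.33 of \cite{gt}) to get $\|\nabla \tilde u\|_{L^\infty(B_1)} \leq C(\|\tilde u\|_{L^\infty(B_1)} + R^2\|\tilde f\|_{L^\infty(B_1)})$, using the barrier only to absorb $\|\tilde u\|_{L^\infty}$ into $R^2\|\tilde f\|_{L^\infty}$. You instead extract the sharper pointwise consequence $|\tilde u(y)| \leq C(1-|y|)$ from the barrier and feed this linear decay into the purely \emph{interior} gradient estimate (Theorem~3.9 of \cite{gt}) on balls of radius comparable to $d(y)$, so that the $d(y)^{-1}$ factor cancels. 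Your route is slightly longer but more elementary: it relies only on the maximum principle and the interior gradient estimate, both available for $C^1$ weak solutions via the weak maximum principle (as in Lemma~\ref{Brandt-weak}), and avoids invoking global boundary $C^{1,\alpha}$ theory. The paper's route is shorter once Theorem~8.33 is taken as a black box.
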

\begin{proof}
The function 
	\begin{equation*}
		w(x)=u(x_{0}+Rx), \qquad x \in B_{1}:=B(0,1),
	\end{equation*}
 is a weak solution to 
\begin{equation*}
		\left\{
		\begin{array}{ccc}
			-\Delta w= g & \text{in}& B_{1},\\
			w=0 & \text{on} & \dr B_{1},
		\end{array}
		\right.
	\end{equation*}	
where $g(x) = R^{2}f(x_{0}+Rx)$, $ x \in B_{1}$. 

Therefore, according to Theorem $8.33$ in \cite{gt}, there is a constant $\widetilde{C}>0$, depending only on $N$, such that
	\begin{equation}\label{gradient_fonction_renormalisée}
		\|\nabla w \|_{L^{\infty}(B_{1})}\leq \widetilde{C}(\|w\|_{L^{\infty}(B_{1})}+\|g\|_{L^{\infty}(B_{1})}). 
	\end{equation}
Now, the function
	\begin{equation*}
		v(x)=w(x)+\frac{\|g\|_{L^{\infty}(B_{1})}}{2N}|x|^{2}, \qquad x \in B_{1}
	\end{equation*}
satisfies $v \geq w$ on $ B_{1}$ and 
	\begin{equation*}
		-\Delta v =-\Delta w -\|g\|_{L^{\infty}(B_{1})}=g -\|g\|_{L^{\infty}(B_{1})} \leq 0. 
	\end{equation*}
Hence, the weak maximum principle (see for instance Theorem 8.1 in \cite{gt}) yields 
\begin{equation}\label{Principe_du_max_pour_w}
	w(x) \leq \disp\sup_{\partial B_{1}} v =\frac{\|g\|_{L^{\infty}(B_{1})}}{2N}, \qquad x \in B_{1}, 
\end{equation}
and so, 
\begin{equation}\label{gradient_fonction_renormalisée-bis}
		\|\nabla w \|_{L^{\infty}(B_{1})}\leq 2 \widetilde{C}\|g\|_{L^{\infty}(B_{1})} \leq 2 \widetilde{C}  R^{2} \|f\|_{L^{\infty}(B(x_{0},R))}.
	\end{equation}
Then, by definition of $w$ we get 
	\begin{equation*}
\|\nabla u\|_{L^{\infty}(B(x_{0},R))}=\frac{\|\nabla w\|_{L^{\infty}(B_{1})}}{R} \leq 2 \widetilde{C}\|f\|_{L^{\infty}(B(x_{0},R))}R.
	\end{equation*} \end{proof}

\bigskip

\begin{lem}\label{lemme_gradient_partie_harmonique}
	Let $x_{0}\in \R^{N}$, $R>0$ and $h \in C^{2}(B(x_{0},R)) \cap C^{1}(\overline{B(x_{0},R)})$ be a solution to
	\begin{equation}\label{h_harmonique}
		\left\{
		\begin{array}{ccc}
			-\Delta h=0 & \text{in} & B(x_{0},R) ,\\
			h \geq 0 & \text{in} &  \overline{B(x_{0},R)}.
		\end{array}
		\right.
	\end{equation}

	\noindent If $x \in \partial B(x_{0},R)$ and $h(x)=0$, then there is a positive constant $C$, depending only on $N$, such that 
	\begin{equation*}
		|\nabla h(x_{0})| \leq C \Big|\frac{\partial h}{\partial \eta}(x)\Big|,
	\end{equation*}
where $\eta$ is the outward unit normal at  $\partial B(x_{0},R)$.
\end{lem}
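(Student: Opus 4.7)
The plan is to reduce to the unit ball and then combine two classical estimates for nonnegative harmonic functions: an interior gradient estimate at the center, and the quantitative Hopf boundary-point lemma at the boundary zero. Concretely, I would set $\tilde h(y):=h(x_0+Ry)$ for $y\in\overline{B_1}$, $B_1:=B(0,1)$. Then $\tilde h$ is nonnegative and harmonic in $B_1$, belongs to $C^1(\overline{B_1})$, and vanishes at the boundary point $\bar y:=(x-x_0)/R\in \partial B_1$. The chain rule gives $|\nabla h(x_0)|=R^{-1}|\nabla\tilde h(0)|$ and $|\partial h/\partial\eta(x)|=R^{-1}|\partial\tilde h/\partial\nu(\bar y)|$, so the factors of $R$ cancel and it suffices to prove a dimensional inequality $|\nabla\tilde h(0)|\le C(N)|\partial\tilde h/\partial\nu(\bar y)|$ on the unit ball. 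If $\tilde h\equiv 0$ the bound is trivial; otherwise the strong maximum principle forces $\tilde h>0$ throughout $B_1$, placing us in the classical Hopf setting.

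The first ingredient is the interior gradient estimate for nonnegative harmonic functions, which yields a dimensional constant $K_1=K_1(N)$ with
\begin{equation*}
|\nabla\tilde h(0)|\le K_1\,\tilde h(0).
\end{equation*}
This is standard and can be obtained by differentiating the Poisson representation of $\tilde h$ on, say, $B(0,1/2)$ and bounding its trace on $\partial B(0,1/2)$ by $C(N)\tilde h(0)$ via Harnack's inequality. The second ingredient is the quantitative Hopf boundary lemma: there exists $K_2=K_2(N)>0$ with
\begin{equation*}
\left|\frac{\partial\tilde h}{\partial\nu}(\bar y)\right|\ge K_2\,\tilde h(0).
\end{equation*}
The textbook argument uses the auxiliary function $v(y):=e^{-\alpha|y-\bar y/2|^{2}}-e^{-\alpha/4}$ (with $\alpha=\alpha(N)$ large enough) on the annular region $A=B(\bar y/2,1/2)\setminus\overline{B(\bar y/2,1/4)}$; then $\Delta v>0$ on $A$, $v=0$ on the outer sphere, and $v>0$ is bounded below on the inner sphere $\partial B(\bar y/2,1/4)$, where Harnack's inequality applied in $B_1$ gives $\tilde h\ge c(N)\tilde h(0)$. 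Choosing $\varepsilon=c(N)\tilde h(0)/\sup_{\partial B(\bar y/2,1/4)}v$, the maximum principle yields $\tilde h\ge \varepsilon v$ in $A$; evaluating the outward normal derivative at $\bar y\in\partial A$ (where $\tilde h(\bar y)=v(\bar y)=0$) produces the required lower bound.

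Combining the two estimates gives $|\nabla\tilde h(0)|\le (K_1/K_2)|\partial\tilde h/\partial\nu(\bar y)|$, which upon undoing the rescaling is exactly the claimed inequality with $C=K_1/K_2$. No step is genuinely delicate; the only mildly technical point is the bookkeeping of dimensional constants in the Harnack-plus-barrier argument for (ii), but this is entirely routine. The Lipschitz regularity of $\tilde h$ up to $\partial B_1$ assumed in the hypotheses ensures that the boundary normal derivative in (ii) is a classical derivative, so the conclusion follows without any additional approximation step.
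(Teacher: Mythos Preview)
Your proposal is correct and follows essentially the same approach as the paper: rescale to the unit ball, bound $|\nabla h(0)|$ above by a dimensional constant times $h(0)$, and bound $|\partial h/\partial\eta|$ at the boundary zero below by a dimensional constant times $h(0)$ via a Hopf-type barrier comparison combined with Harnack's inequality. The only cosmetic differences are that the paper obtains the gradient bound at the center directly from the mean-value identity $\partial_j h(0)=\omega_N^{-1}\int_{\partial B_1}h\,\eta_j\,d\sigma$, and for the Hopf step uses the harmonic barrier $v(z)=|z|^{2-N}-1$ (or $-\ln|z|$ when $N=2$) on the annulus $B_1\setminus\overline{B_{1/2}}$ centered at the origin rather than your exponential subharmonic barrier centered near the boundary point.
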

\begin{proof}
\noindent As in the previous Lemma we can suppose that $x_{0}=0$ and $R=1$. Also, we can suppose that $h(0)>0$, otherwise the result is trivially true by the strong maximum principle. \\
By Harnack inequality, there is a constant $C_{H}=C_H(N) >0$ such that
 	\begin{equation*}
 	h(0)\leq \disp\sup_{\overline{B(0,1/2)}} h\leq C_{H} \disp\inf_{\overline{B(0,1/2)}} h \leq  C_{H} \disp\inf_{\partial B(0,1/2)} h,
 	\end{equation*}
where the latter inequality follows from the harmonicity of $h$ and the maximum principle.\\
 	
 	\noindent Now, we define in $\overline{B(0,1)} \backslash \{0\}$, the function
 	 \begin{equation*}
 	 	v(x)=\left\{
 	 	\begin{array}{ccc}
 	 		- \ln(|z|)& \text{if} & N=2 ,\\
 	 		\frac{1}{|z|^{N-2}}-1 & \text{if} &  N \geq 3,
 	 	\end{array}
 	 	\right.
 	 \end{equation*}
and we recall that $v$ solves 
   	\begin{equation}\label{sol_fondamentale}
   	\left\{
   	\begin{array}{ccc}
   		-\Delta v=0 & \text{in} & B(0,1)\backslash \{0\} ,\\
   		v=0 & \text{in} &  \partial B(0,1).
   	\end{array}
   	\right.
   \end{equation}
Suppose $N \geq 3$. If $y \in \partial B(0,1/2)$, then
\begin{equation}\label{h_sur_le_bord_la_demi_boule}
	h(y) \geq \disp\inf_{\partial B(0,1/2)} h \geq \frac{h(0)}{C_{H}}  \frac{v(y)}{2^{N-2}-1}= \frac{h(0)}{C_{H}(2^{N-2}-1)}v(y). 
\end{equation}
We let $\varepsilon=\frac{h(0)}{C_{H}(2^{N-2}-1)}>0$, $A=B(0,1) \backslash \overline{B(0,1/2)}$ and we consider the function 
\begin{equation*}
w(z)=h(z)-\varepsilon v(z), \qquad \forall \, z \in A.
\end{equation*}
 Hence, \eqref{h_harmonique}, \eqref{sol_fondamentale} and \eqref{h_sur_le_bord_la_demi_boule} ensure that 
\begin{equation*}
	\left\{
	\begin{array}{ccc}
		-\Delta w=0 & \text{in} & A,\\
		w\geq 0 & \text{on} &  \partial A,
	\end{array}
	\right.
\end{equation*}
and the maximum principle yields 
\begin{equation*}
	w \geq 0 \quad \text{in} \quad A.
\end{equation*}
Since $w(x)=0$, we have 
\begin{equation}\label{dérivée_normale_de_w}
\dfrac{\partial h}{\partial \eta}(x) - \varepsilon \dfrac{\partial v}{\partial \eta}(x) = \dfrac{\partial w}{\partial \eta}(x) \leq 0. 
\end{equation}
However, here $\eta=x$, thus
\begin{equation}\label{dérivée_normale_de_v}
	\dfrac{\partial v}{\partial \eta}(x) =(\nabla v(x) \cdot x)=2-N.
\end{equation}
Finally, thanks to \eqref{dérivée_normale_de_w} and \eqref{dérivée_normale_de_v}, we obtain
\begin{equation}\label{comparaison_h(0)_et_dérivée_normale}
	\Big|\frac{\partial h}{\partial \eta}(x)\Big| \geq - \dfrac{\partial h}{\partial \eta}(x)  \geq \frac{N-2}{C_{H}(2^{N-2}-1)} h(0)>0.
\end{equation}
Now, for any $j=1,...,N$, by the mean value Theorem for harmonique functions, and the positivity of $h$,  we have
\begin{equation*}
\Big \vert \frac{\partial h }{\partial z_j} (0) \Big \vert = \Big \vert \frac{1}{\omega_N} \int_{B(0,1)}  \frac{\partial h }{\partial z_j} dz \Big \vert = \Big \vert \frac{1}{\omega_N} \int_{\partial B(0,1)}  h \eta_j d\sigma \Big \vert \leq  
\frac{\alpha_N}{\omega_N} \frac{1}{\alpha_N} \int_{\partial B(0,1)}  h d\sigma = N h(0),
\end{equation*}
where $\omega_N$ denotes the measure of the unit ball $B(0,1)$ and $\alpha_N$ is the measure of $ \partial B(0,1)$.  

From the latter and \eqref{comparaison_h(0)_et_dérivée_normale} we infer that 
\begin{equation*}
	|\nabla h(0)| \leq C(N) \Big|\frac{\partial h}{\partial \eta}(x)\Big|.
\end{equation*}
This proves the Lemma when $ N\geq3$.

For $N=2$, we observe that for any  $y \in \partial B(0,1/2)$, 
\begin{equation}\label{h_sur_le_bord_la_demi_boule-bis}
	h(y) \geq \disp\inf_{\partial B(0,1/2)} h \geq \frac{h(0)}{C_{H}}  \frac{v(y)}{\ln(2) }= \frac{h(0)}{C_{H}\ln(2)}v(y), 
\end{equation}
then the claim follows by the same procedure above, with $\varepsilon=\frac{h(0)}{\ln(2) C_{H}}>0$.
\end{proof}

\medskip

\begin{lem} \label{Brandt-weak}
	Let $N \geq 2$, $U \subset \R^{N}$ be a domain and $u \in C^{1}(U)$ be a solution of
	\begin{equation}\label{equation_de_poisson}
		\begin{array}{ccc}
			-\Delta u=f & \text{in} & \mathcal{D}'(U),
		\end{array}
	\end{equation}
	with $f \in L^{\infty}_{\text{loc}}(U)$. \\
	Assume $\delta >0$ and $y \in U$ such that $\overline{B(y,\delta)} \subset U$.  Then, for any 
	$x \in B(y,\frac{\delta}{2})$ we have
	\begin{equation}\label{inégalité_de_brandt}
		|u(x)-u(y)| \leq \dfrac{\sqrt{N}}{2^{1-\gamma}}\Big(2N\|u\|_{L^{\infty}(\overline{B(y,\delta)})} \delta^{-\gamma}+
		\|f\|_{L^{\infty}(\overline{B(y,\delta)})}\delta^{2-\gamma}\Big)|x-y|^{\gamma},
	\end{equation}
	with $\gamma \in ]0,1]$.
\end{lem}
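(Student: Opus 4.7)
The plan is to derive the Hölder bound by combining a pointwise interior gradient estimate on the half-radius ball $B(y,\delta/2)$ with the mean value theorem along the segment $[y,x]$, and then performing a trivial Lipschitz-to-Hölder interpolation (exploiting that $|x-y|<\delta/2$). First, since $f\in L^\infty_{\mathrm{loc}}(U)$, interior elliptic regularity upgrades the distributional solution $u$ to $u\in C^{1,\alpha}_{\mathrm{loc}}(U)$ for every $\alpha\in(0,1)$, so $\nabla u$ is continuous on $U$ and $-\Delta u=f$ holds a.e.

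Next, I establish a pointwise gradient bound on $B(y,\delta/2)$. For any $z\in B(y,\delta/2)$ the ball $B(z,\delta/2)$ is contained in $B(y,\delta)\subset U$, so I decompose $u=h+w$ on $B(z,\delta/2)$, where $h$ is harmonic with $h=u$ on $\partial B(z,\delta/2)$ and $w$ solves $-\Delta w=f$ with zero Dirichlet data. Applied to each $\partial_i h$ (itself harmonic), the mean value property combined with the maximum principle for $h$ gives
$$|\partial_i h(z)|\le \tfrac{N}{\delta/2}\|h\|_{L^\infty(\partial B(z,\delta/2))}\le \tfrac{2N}{\delta}\|u\|_{L^\infty(\overline{B(y,\delta)})};$$
on the other hand, the Poisson/Green representation of $w$ on the ball (equivalently, a rescaled form of Lemma~\ref{lemme_gradient_partie_dirichlet}) yields $|\partial_i w(z)|\le \|f\|_{L^\infty(\overline{B(y,\delta)})}\,\delta$. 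Summing via $|\nabla u|\le \sqrt{N}\max_i|\partial_i u|$ produces
$$|\nabla u(z)|\;\le\;\sqrt{N}\left(\tfrac{2N}{\delta}\|u\|_{L^\infty(\overline{B(y,\delta)})}+\delta\,\|f\|_{L^\infty(\overline{B(y,\delta)})}\right)\qquad\forall\,z\in B(y,\delta/2).$$

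Finally, since $x\in B(y,\delta/2)$ the segment $[y,x]$ lies in $B(y,\delta/2)$, and the continuity of $\nabla u$ combined with the mean value theorem gives $|u(x)-u(y)|\le \sup_{z\in[y,x]}|\nabla u(z)|\cdot|x-y|$. Writing $|x-y|=|x-y|^\gamma\,|x-y|^{1-\gamma}$ and bounding $|x-y|^{1-\gamma}\le (\delta/2)^{1-\gamma}=2^{\gamma-1}\delta^{1-\gamma}$ converts the Lipschitz bound of the previous step into exactly the claimed inequality \eqref{inégalité_de_brandt}.

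The only genuinely delicate point is bookkeeping constants so that the coefficient in front of $\|f\|_{L^\infty}\delta^{2-\gamma}$ in the final inequality is $1$ rather than a larger $C(N)$. This is handled either by computing $\int_{B(z,\delta/2)}|\nabla_x G(z,y)|\,dy$ explicitly for the Dirichlet Green's function on the ball, or by carefully rescaling the proof of Lemma~\ref{lemme_gradient_partie_dirichlet} (where the radial barrier $\tfrac{\|f\|_{L^\infty}}{2N}(R^2-|x-z|^2)$ provides a sharp pointwise bound for $w$ near the center); with this chosen constant, the three-step scheme above gives \eqref{inégalité_de_brandt} verbatim.
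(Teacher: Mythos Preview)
Your approach is essentially the same as the paper's: both establish the pointwise bound $|\partial_i u(z)|\le \frac{2N}{\delta}\|u\|_{L^\infty(\overline{B(y,\delta)})}+\frac{\delta}{4}\|f\|_{L^\infty(\overline{B(y,\delta)})}$ for every $z\in\overline{B(y,\delta/2)}$, then apply the mean value theorem and the trivial interpolation $|x-y|\le (\delta/2)^{1-\gamma}|x-y|^\gamma$. The only difference is that the paper invokes this gradient bound by citing estimate~(3.15) of Gilbarg--Trudinger (noting that its barrier proof goes through verbatim under the weak maximum principle, since $u\in C^1$ and $f\in L^\infty_{\mathrm{loc}}$), whereas you spell out the harmonic/Dirichlet splitting $u=h+w$ that underlies that estimate. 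Your remark that the constant in the $w$-bound requires the sharp barrier $\frac{\|f\|_{L^\infty}}{2N}(R^2-|x-z|^2)$ rather than the soft Lemma~\ref{lemme_gradient_partie_dirichlet} is exactly right; with that barrier (or the GT half-ball barrier) one gets $|\partial_i w(z)|\le \frac{R}{2}\|f\|_{L^\infty}=\frac{\delta}{4}\|f\|_{L^\infty}$, which is even better than the $\delta\|f\|_{L^\infty}$ you wrote and comfortably yields~\eqref{inégalité_de_brandt}.
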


\begin{proof}  
Let $y \in U$ and $R>0$ such that $\overline{B(y,R)} \subset U$ then, for any $ j = 1,...,N$, we have 
\begin{equation}\label{grad-est-Brandt}
	\Big \vert  \frac{\partial u}{\partial x_j}(y) \Big \vert  \leq \Big(\frac{N}{R}\|u\|_{L^{\infty}(\partial B(y,R))} + \frac{R}{2} \|f\|_{L^{\infty}(\overline{B(y,R)})} \Big).
\end{equation}	
This is exactly the same as the estimate $(3.15)$ on page 38 of \cite{gt}. Indeed, since $u$ is $C^1$, the barrier function 
$\Psi$ is smooth and $f$ belongs to $L^{\infty}_{\text{loc}}(U)$, we can apply the weak maximum principle (see, for example, Theorem 8.1 in \cite{gt}) to complete the proof in \cite{gt} and thus obtain the gradient estimate above \eqref{grad-est-Brandt}.

By the mean value theorem, for any $\gamma \in ]0,1]$,
	\begin{equation}\label{Mean_value_theorem}
		\begin{split}
			|u(x)-u(y)| \leq \disp\sup_{z \in \overline{B(y,\frac{\delta}{2})}} |\nabla u(z)| |x-y| \leq 
			\frac{\delta^{1-\gamma}}{2^{1-\gamma}}\disp\sup_{z \in \overline{B(y,\frac{\delta}{2})}}|\nabla u(z)| |x-y|^{\gamma}.
		\end{split}
	\end{equation}
	Moreover,  for any $z \in \overline{B(y,\frac{\delta}{2})}$ we have $\overline{B(z,\frac{\delta}{2})}\subset \overline{B(y,\delta)}\subset U$,  so we can apply \eqref {grad-est-Brandt} (with $R=\frac{\delta}{2}$) to get 
	\begin{equation}\label{majoration_du_gradient}
		|\nabla u(z)|\leq \sqrt{N}\Big(\frac{2 N}{\delta}\|u\|_{L^{\infty}(\overline{ B(y,\delta)})}+\frac{\delta}{4} \|f\|_{L^{\infty}(\overline{B(y,\delta)})}\Big),
	\end{equation}
	 for any $z \in \overline{B(y,\frac{\delta}{2})}$. The claim then follows by combining \eqref{Mean_value_theorem} and \eqref{majoration_du_gradient}.
 \end{proof}

\medskip

We conclude with the following useful extension lemma (in which we keep an explicit track of all constants and their dependencies). 

\medskip

\begin{lem}\label{extension_C_1-cylinder}
Assume $ N \geq 2$, $ \alpha \in (0,1] $ and $ r>0$. \\ Let $g \in C^1(\R^{N-1})$ be such that $\nabla g \in 
C^{0,\alpha}_{\text{loc}}(\R^{N-1}) $ and let us denote by $\Omega$ the epigraph defined by $g$. \\

(i) For any  $v \in C^{1,\alpha}(\overline{ \Omega \cap (B'(0',r)\times \R}))$ there exists $\widetilde{w} \in C^{1,\alpha}(\overline{B'(0',r)\times \R})$ such that 
	\begin{equation*}
		\widetilde{w}=v \quad \text{in } \quad \overline{ \Omega \cap (B'(0',r)\times \R})
	\end{equation*} 
	and
	\begin{equation*}
		\|\widetilde{w}\|_{C^{1,\alpha}(\overline{B'(0',r)\times \R})}\leq C (1+\|\nabla g \|_{C^{0,\alpha}(\overline{B'(0',r)})})^{4} 
		\|v\|_{C^{1,\alpha}(\overline{ \Omega \cap (B'(0',r)\times \R}))}
	\end{equation*}
	where $C=C(N)$ is a positive constant.

\bigskip

 (ii) Assume $\widetilde{T}>\displaystyle\sup_{B'(0',r)} \vert g \vert $ and $v \in C^{1,\alpha}
(\overline{\mathfrak{C}^{g}(0',r,\widetilde{T})})$. \\ Then, for any $\displaystyle\sup_{B'(0',r)} \vert g \vert  < t < \widetilde{T} $ there exists $\widetilde{w_t} \in C^{1,\alpha}(\overline{B'(0',r) \times (-t,t)}) $ such that 
	\begin{equation*}
		\widetilde{w_t}=v \quad \text{in } \quad \overline{\mathfrak{C}^{g}(0',r,t) }
	\end{equation*} 
	and
	\begin{equation*}
		\| \widetilde{w_t} \|_{C^{1,\alpha}(\overline{B'(0',r) \times (-t,t))} } \leq C (1+\|\nabla g \|_{C^{0,\alpha}(\overline{B'(0',r)})})^{4} \|v\|_{C^{1,\alpha}(\overline{\mathfrak{C}^{g}(0',r,\widetilde{T})})}
	\end{equation*}
	where $C=C(N,t, \widetilde{T})$ is a positive constant.
\end{lem}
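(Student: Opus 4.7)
The plan is to reduce the problem to the flat-boundary case via a flattening diffeomorphism, extend by reflection, and transport back. Define
\[
\Phi(x',x_N)=(x',x_N+g(x')),\qquad \Phi^{-1}(y',y_N)=(y',y_N-g(y')).
\]
Both are global $C^{1,\alpha}_{\mathrm{loc}}$-diffeomorphisms of $\R^N$ preserving every cylinder $B'(0',r)\times\R$, and $\Phi$ sends $\overline{B'(0',r)\times[0,+\infty)}$ onto $\overline{\Omega\cap(B'(0',r)\times\R)}$. Set $u:=v\circ\Phi$. The chain rule gives $\partial_i u=\partial_i v\circ\Phi+(\partial_N v\circ\Phi)\,\partial_i g$ for $i<N$ and $\partial_N u=\partial_N v\circ\Phi$, so a product/composition rule for H\"older seminorms, together with the bi-Lipschitz constant $1+\|\nabla g\|_{L^\infty}$ of $\Phi$, yields
\[
\|u\|_{C^{1,\alpha}(\overline{B'(0',r)\times[0,+\infty)})}\le C_{1}\bigl(1+\|\nabla g\|_{C^{0,\alpha}(\overline{B'(0',r)})}\bigr)^{2}\,\|v\|_{C^{1,\alpha}(\overline{\Omega\cap(B'(0',r)\times\R)})}.
\]

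Next I would extend $u$ across $\{x_N=0\}$ by the higher-order reflection
\[
\bar u(x',x_N):=\begin{cases} u(x',x_N),&x_N\ge 0,\\ 3\,u(x',-x_N)-2\,u(x',-2x_N),&x_N<0,\end{cases}
\]
whose coefficients $(3,-2)$ are the unique ones making $\bar u$ and $\partial_N\bar u$ continuous across $\{x_N=0\}$, while the tangential derivatives match automatically. A direct computation shows $\bar u\in C^{1,\alpha}(\overline{B'(0',r)\times\R})$ with $\|\bar u\|_{C^{1,\alpha}}\le C_2(N)\|u\|_{C^{1,\alpha}}$. Setting $\widetilde w:=\bar u\circ\Phi^{-1}$, which is defined on $\overline{B'(0',r)\times\R}$ and coincides with $v$ on $\overline{\Omega\cap(B'(0',r)\times\R)}$, the chain rule again produces at most two more factors of $(1+\|\nabla g\|_{C^{0,\alpha}})$. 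Collecting the bounds gives the exponent $4$ required by (i).

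For part (ii) the difficulty is that $v$ is only defined for $y_N<\widetilde T$, so the reflection must not "see" the missing piece. The remedy is a simple pre-truncation: fix $t^{*}\in(t,\widetilde T)$ and a smooth cutoff $\chi\in C^{\infty}_c(\R)$ with $\chi\equiv1$ on $(-\infty,t^*]$ and $\chi\equiv0$ on $[\widetilde T,+\infty)$; then $\widetilde v:=\chi\, v$ extended by $0$ above $\widetilde T$ belongs to $C^{1,\alpha}(\overline{\Omega\cap(B'(0',r)\times\R)})$, with
\[
\|\widetilde v\|_{C^{1,\alpha}(\overline{\Omega\cap(B'(0',r)\times\R)})}\le C(t,\widetilde T)\,\|v\|_{C^{1,\alpha}(\overline{\mathfrak{C}^{g}(0',r,\widetilde T)})}.
\]
Applying (i) to $\widetilde v$ yields an extension $\widetilde w\in C^{1,\alpha}(\overline{B'(0',r)\times\R})$ with $\|\widetilde w\|_{C^{1,\alpha}}\le C(1+\|\nabla g\|_{C^{0,\alpha}})^{4}\|\widetilde v\|_{C^{1,\alpha}}$. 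Since $\chi\equiv1$ on $\overline{\mathfrak{C}^{g}(0',r,t)}$, we have $\widetilde w\equiv v$ there, and the restriction $\widetilde{w_t}:=\widetilde w\big|_{\overline{B'(0',r)\times(-t,t)}}$ satisfies the required estimate with $C=C(N,t,\widetilde T)$.

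The only genuinely delicate step is the bookkeeping of the constants in the chain-rule estimates at stages $v\mapsto u$ and $\bar u\mapsto\widetilde w$: each composition with $\Phi^{\pm1}$ multiplies the $C^{0,\alpha}$-seminorm by a factor controlled by $(1+\|\nabla g\|_{C^{0,\alpha}})^{1+\alpha}$, and one must check that the product rule applied to $(\partial_N v\circ\Phi)\partial_i g$ does not introduce extra unwanted powers. Keeping track of $\alpha\le 1$ everywhere and using $\|\cdot\|_{L^{\infty}}\le\|\cdot\|_{C^{0,\alpha}}$ comfortably absorbs all terms into the single global factor $(1+\|\nabla g\|_{C^{0,\alpha}})^{4}$ stated in the lemma, and this is the main point that has to be verified in the detailed write-up.
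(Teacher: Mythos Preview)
Your proposal is correct and follows essentially the same route as the paper: flatten via $\Phi$, perform a higher-order reflection across $\{x_N=0\}$, transport back via $\Phi^{-1}$, and for (ii) pre-multiply by a cutoff in $x_N$ before invoking (i). The only cosmetic differences are the choice of reflection coefficients (the paper uses $c_1=-3,\,c_2=4$ with scales $1,1/2$ instead of your $(3,-2)$ with scales $1,2$) and that the paper carries out the constant bookkeeping explicitly rather than summarizing it.
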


\begin{proof}
	To simplify the presentation, we set $B'_{r}=B'(0',r)$ and $\Omega_r=\Omega\cap (B'(0',r)\times \R)$. Then we consider the following two functions :
\begin{equation*}
		\begin{array}{lccc}
			\psi : & \R^{N} & \longrightarrow & \R^{N} \\
			& x=(x',x_{N}) & \longmapsto & (x',x_{N}-g(x')) 
		\end{array}
	\end{equation*}
and	
\begin{equation*}
		\begin{array}{lccc}
			\theta : & \R^{N} & \longrightarrow & \R^{N} \\
			& y=(y',y_{N}) & \longmapsto & (y',y_{N}+g(y')) 
		\end{array}
	\end{equation*}	
and we observe that $\psi^{-1}= \theta$, $\psi(\overline{\Omega_r})=\overline{B'_r\times \R_{+}}$, 
$\psi(\partial \Omega\cap (B'_{r}\times \R))=\partial \R^{N}_{+} \cap (B'_r \times \R)$ and $\psi((B'_r \times \R) \backslash \overline{\Omega_r})=(B'_r \times \R)\backslash \overline{\R^{N}_{+}}$. Moreover, 
	\begin{equation}\label{prop-theta-psi}
\begin{split}		
&	|\theta(x)-\theta(y)| \leq |x-y|+|g(x')-g(y')|\leq (1+ \Vert \nabla g \Vert_{L^{\infty}(B'_r)})|x-y|, \quad \forall \, x,y \in \overline{B'_r\times \R},\\
&	|\psi(x)-\psi(y)| \leq |x-y|+|g(x')-g(y')|\leq (1+ \Vert \nabla g \Vert_{L^{\infty}(B'_r)} ) |x-y|,  \quad \forall \, x,y \in \overline{B'_r\times \R}.
\end{split}		
	\end{equation} 

\medskip

\noindent (i) For any $ y \in \overline{B'_r\times \R_{+}}$ set $\widetilde{v}(y)=v(\theta(y))$, then 
\begin{equation*}
		\widetilde{v} \in C^{1,\alpha}(\overline{B'_r\times \R_{+}}).
	\end{equation*}
	
	Indeed, by definition, we have $\widetilde{v} \in  C^0(\overline{B'_r\times \R_{+}})$ and 
\begin{equation}\label{bound_v_tilde}
		\|\widetilde{v}\|_{L^{\infty}(B'_r\times \R_{+})} = \|v\|_{L^{\infty}(\Omega_r)} 
\end{equation}
moreover, for any $x,y \in B'_r \times \R_{+}$, 
\begin{equation}\label{hold_v_tilde}
		\begin{split}
			|\widetilde{v}(x)-\widetilde{v}(y)|&\leq  \left[ v \right]_{C^{0,\alpha}(\Omega_r)} \vert  \theta (x) - \theta(y)\vert^{\alpha} \leq \left[ v \right]_{C^{0,\alpha}(\Omega_r)}  (1+ \Vert \nabla g \Vert_{L^{\infty}(B'_r)})^{\alpha}|x-y|^{\alpha},
		\end{split} 
	\end{equation}
where in the latter we have used \eqref{prop-theta-psi}. Hence, 
\begin{equation}\label{hold_v_tilde-bis}
		\begin{split}
		 \left[ \widetilde{v}\right]_{C^{0,\alpha}(B'_r \times \R_{+})} \leq \left[ v \right]_{C^{0,\alpha}(\Omega_r)}  (1+ \Vert \nabla g \Vert_{L^{\infty}(B'_r)})^{\alpha}.
		\end{split} 
	\end{equation}
	
Also, for any $i \in \{1,\cdots, N-1\}$,
	\begin{equation*}
		\partial_{i}\widetilde{v}(y)=\partial_{i}v(\theta (y))+\partial_{i}g(y')\partial_{N}v(\theta (y))
	\end{equation*}
	and
	\begin{equation*}
		\partial_{N}\widetilde{v}(y)=\partial_{N}v (\theta (y)),
	\end{equation*}
	thus,
\begin{equation}\label{bound_nabla_v_tilde}
		\| \nabla \widetilde{v} \|_{L^{\infty}(B'_r \times \R_{+} )} \leq \sqrt{2}
		(1+ \| \nabla g \|_{L^{\infty}(B'_r)}) \| \nabla v\|_{L^{\infty}(\Omega_r)}.
\end{equation}
Furthermore, for any $x,y \in B'_r \times \R_{+}$ and for any $i \in \lbrace 1,\cdots, N-1 \rbrace $, we have 
	\begin{equation}\label{derivative_for_i_different_from_N}
		\begin{split}
			|\partial_{i}\widetilde{v}(x)-\partial_{i} \widetilde{v}(y)|&\leq |\partial_{i}v(\theta(x))-\partial_{i}v(\theta(y))|+ 
			|\partial_{i}g(x')\partial_{N}v(\theta(x))-\partial_{i}g(y')\partial_{N}v(\theta(y))|\\
			& \leq \left[ \partial_{i}v \right]_{C^{0,\alpha}(\overline{\Omega_r})} \vert  \theta (x) - \theta(y)\vert^{\alpha}  + 
			|\partial_{i}g(x') - \partial_{i} g(y') \vert \partial_{N}v(\theta(x)) \vert \\
			& + |\partial_{i}g(y')\vert \vert \partial_{N}v(\theta(x))- \partial_{N}v(\theta(y))| \leq  \left[ \partial_{i}v 
			\right]_{C^{0,\alpha}(\overline{\Omega_r})} \vert  \theta (x) - \theta(y)\vert^{\alpha} \\
			& + \vert \partial_{N}v(\theta(x)) \vert\left[ \partial_{i}g \right]_{C^{0,\alpha}(\overline{B'_r})}\vert x'-y' \vert^{\alpha} + 
			|\partial_{i}g(y')\vert 
			\left[ \partial_{N}v \right]_{C^{0,\alpha}(\overline{\Omega_r}))} \vert  \theta (x) - \theta(y)\vert^{\alpha} \\
			& \leq \left[ \partial_{i}v \right]_{C^{0,\alpha}(\overline{\Omega_r})} (1+ \Vert \nabla g \Vert_{L^{\infty}(B'_r)})^{\alpha}
			\vert x-y\vert^{\alpha} + \Vert \partial_{N}v \Vert_{L^{\infty}(\Omega_r)} 
			\left[ \partial_{i}g \right]_{C^{0,\alpha}(\overline{B'_r})}\vert x'-y' \vert^{\alpha}  \\
			&   + \Vert \partial_{i}g \Vert_{L^{\infty}(B'_r)} \left[ \partial_{N}v \right]_{C^{0,\alpha}(\overline{\Omega_r})} 
			    (1+ \Vert \nabla g \Vert_{L^{\infty}(B'_r)})^{\alpha}\vert x-y\vert^{\alpha} \\
			& \leq \left(  ( \left[ \partial_{i}v \right]_{C^{0,\alpha}(\overline{\Omega_r})} + \Vert \partial_{i}g \Vert_{L^{\infty}(B'_r)} 
			\left[ \partial_{N}v \right]_{C^{0,\alpha}(\overline{\Omega_r})})
			 (1+ \Vert \nabla g \Vert_{L^{\infty}(B'_r)})^{\alpha} \right) \vert x-y\vert^{\alpha} + \\
			& + \Vert \partial_{N}v \Vert_{L^{\infty}(\Omega_r)} \left[ \partial_{i}g \right]_{C^{0,\alpha}(\overline{B'_r})} 
			\vert x-y \vert^{\alpha}  \\
			& \leq (1+ \Vert \nabla g \Vert_{L^{\infty}(B'_r)})^{\alpha+1} 
			( \left[ \partial_{i}v \right]_{C^{0,\alpha}(\overline{\Omega_r})}+
			\left[ \partial_{N}v \right]_{C^{0,\alpha}(\overline{\Omega_r})}) \vert x-y\vert^{\alpha} + \\
			& + \Vert \partial_{N}v \Vert_{L^{\infty}(\Omega_r)} \left[ \partial_{i}g \right]_{C^{0,\alpha}(\overline{B'_r})} 
			\vert x-y \vert^{\alpha}  \\
			& \leq 2  (1+ \Vert \nabla g \Vert_{C^{0,\alpha}(\overline{B'_r})})^{\alpha+1} 
			\Vert  \nabla v \Vert_{C^{0,\alpha}(\overline{\Omega_r})} \vert x-y \vert^{\alpha}. \\
		\end{split} 
	\end{equation}
Hence,
\begin{equation}\label{derivative_for_i_different_from_N-bis}
		\begin{split}
			\left[ \partial_{i} \widetilde{v} \right]_{C^{0,\alpha}(\overline{B'_r \times \R_{+}})} \leq 2  (1+ \Vert \nabla g 
			\Vert_{C^{0,\alpha}(\overline{B'_r})})^{\alpha+1} \Vert  \nabla v \Vert_{C^{0,\alpha}(\overline{\Omega_r})}.
		\end{split} 
	\end{equation} 
Similarly we have 
	\begin{equation}\label{derivative_for_N}
		|\partial_{N}\widetilde{v}(x)-\partial_{N} \widetilde{v}(y)| \leq \left[ \partial_{N} v \right]_{C^{0,\alpha}(\overline{\Omega_r})} 
		(1+ \Vert \nabla g \Vert_{L^{\infty}(B'_r)})^{\alpha}\vert x-y \vert^{\alpha}
	\end{equation}
and so
\begin{equation}\label{derivative_for_N-bis}
		\left[ \partial_{N} \widetilde{v} \right]_{C^{0,\alpha}(\overline{B'_r \times \R_{+}})} \leq \left[ \partial_{N} v \right]_{C^{0,\alpha}(\overline{\Omega_r})} (1+ \Vert \nabla g \Vert_{L^{\infty}(B'_r)})^{\alpha}.
	\end{equation}

Thanks to \eqref{bound_v_tilde}, \eqref{hold_v_tilde-bis}, \eqref{bound_nabla_v_tilde},\eqref{derivative_for_i_different_from_N-bis} and \eqref{derivative_for_N-bis} we deduce that $\widetilde{v} \in C^{1,\alpha}(\overline{B'_r \times \R_{+}})$ and 
	\begin{equation}\label{estimate_v_tilde}
		\|\widetilde{v}\|_{C^{1,\alpha}(\overline{B'_r \times \R_{+})}} \leq (2N+1) ( 1 +
		\| \nabla g \|_{C^{0,\alpha} (\overline{B'_r})})^{\alpha+1} \|v\|_{C^{1,\alpha}(\overline{\Omega_r})}.
	\end{equation}

Now, for any $ y \in \overline{B'_r \times \R}$ we set 
	\begin{equation*}
		\widetilde{V}(y)=
		\left\{
		\begin{array}{ccc}
			\widetilde{v}(y',y_{N})  & \text{if} & y_{N}\geq 0,\\
			\disp\sum_{i=1}^{2}c_{i}\widetilde{v} \left( y',-\frac{y_{N}}{i} \right)  & \text{if}& y_{N}<0,
		\end{array}
		\right.
	\end{equation*}
	where $c_{1}=-3$ and $c_{2}=4$. With this choice, we see that $\widetilde{V}$ belongs to $C^1(B'_r \times \R)$ and 
	\begin{equation}\label{norm_C1_V_tilde}
		\|\widetilde{V}\|_{L^{\infty}(B'_r \times \R)}+ \|\nabla \widetilde{V}\|_{L^{\infty}(B'_r \times \R)}  \leq 
		7(N+1) \| \widetilde{v}\|_{C^{1,\alpha}(\overline{B'_r \times \R_{+}})}.
	\end{equation}
	Now, using the value of $c_1$ and $c_2$, and   
	\begin{equation*}
		|x_{N}+\frac{y_{N}}{i}|\leq |x_{N}|+\frac{|y_{N}|}{i}\leq |x_{N}|+|y_{N}|\leq x_{N}-y_{N}=|x_{N}-y_{N}|,
	\end{equation*}
it follows that, for any $i \in \{1, \cdots ,N\} $
\begin{equation}\label{holder_deriv-V_tilde}
\left[ \partial_{i} \widetilde{V} \right]_{C^{0,\alpha}(\overline{B'_r \times \R})} \leq \ C_1(N) 
\| \widetilde{v}\|_{C^{1,\alpha}(\overline{B'_r \times \R_{+}})}.
\end{equation}	
	
	Hence, \eqref{estimate_v_tilde}, \eqref{norm_C1_V_tilde} and \eqref{holder_deriv-V_tilde}  lead to 
	\begin{equation}\label{norme_v_tilde}
		\|\widetilde{V}\|_{C^{1,\alpha}(\overline{B'_r \times \R})} \leq 
		C_2(N) (1+\| \nabla g \|_{C^{0,\alpha} (\overline{B'_r})})^{\alpha+1}  \| v \|_{C^{1,\alpha}(\overline{\Omega_r})}.
	\end{equation}
	
To proceed further, for any $ y \in \overline{B'_r\times \R}$ set $\widetilde{w}(y):=\widetilde{V}(\psi(y))$, then $ \widetilde{w}$ satisfies  the claim. 
Indeed, if $y \in \overline{\Omega_r}$ then $ \psi(y) \in \overline{B'_r \times \R_{+}},$ and so $\widetilde{V}(\psi(y)) = \widetilde{v}(\psi(y))$. Hence, $ \widetilde{w}(y) = \widetilde{V}(\psi(y))= \widetilde{v}(\psi(y)) = v((\theta \circ \psi) (y)) = v(y)$ for any $y \in \overline{\Omega_r}$.  Also, in view of the definition of $\psi$ and \eqref{prop-theta-psi}, the same computations leading to \eqref{estimate_v_tilde} yield 
	
\begin{equation}\label{estimate_W_tilde}
		\|\widetilde{w}\|_{C^{1,\alpha}(\overline{B'_r \times \R})} \leq (2N+1) 
		(1+\| \nabla g \|_{C^{0,\alpha} (\overline{B'_r})})^{\alpha+1} \|\widetilde{V}\|_{C^{1,\alpha}( \overline{B'_r \times \R})} .\end{equation}	
		
By putting together  \eqref{estimate_W_tilde} and \eqref{norme_v_tilde} we get 

\begin{equation}\label{estimate_W_tilde-finale}
\|\widetilde{w}\|_{C^{1,\alpha}(\overline{B'_r \times \R})} \leq  
		C_3(N)(1+\| \nabla g \|_{C^{0,\alpha} (\overline{B'_r})})^{2(\alpha+1)}  \| v \|_{C^{1,\alpha}(\overline{\Omega_r})}.
\end{equation}	
	
This concludes the proof of the first part of the Lemma. 	
	
\medskip

\noindent (ii) Let us consider an even function $\eta_t \in C_c^2(\R)$ such that $ 0 \le \eta_t \le 1,$ 
\begin{equation*}\label{cut-off-1D}
\eta_t(x_N) := \begin{cases}
1 \quad & {\rm if} \qquad 0 \leq x_N \leq t ,\\
0 & {\rm if} \qquad x_N \ge t + \frac{\widetilde{T} -t }{2}.
\end{cases}
\end{equation*}
Then,  $v \eta_t \in  C^{1,\alpha}(\overline{\Omega_r})$ and $ \|  v \eta_t \|_{C^{1,\alpha}(\overline{\Omega_r})} =
\|  v \eta_t \|_{C^{1,\alpha}(\overline{\mathfrak{C}^{g}(0',r,\widetilde{T})})} 
\leq C(\eta_t) \| v \|_{C^{1,\alpha}(\overline{\mathfrak{C}^{g}(0',r,\widetilde{T})})}$.

\noindent From the first part of Lemma we get $\widetilde{v \eta_t}\in C^{1,\alpha}(\overline{B'_r \times \R})$ such that 
$\widetilde{v \eta_t} = v \eta_t$ on  $\overline{\Omega_r}$ and 
\begin{equation*}
           \begin{split}
\|\widetilde{v \eta_t}\|_{C^{1,\alpha}(\overline{B'_r \times \R})} & \leq C_3(N)(1+\| \nabla g \|_{C^{0,\alpha} (\overline{B'_r})})^{4}  \| v \eta_t \|_{C^{1,\alpha}(\overline{\Omega_r})} \\
& \leq C(\eta_t) C_3(N)(1+\| \nabla g \|_{C^{0,\alpha} (\overline{B'_r})})^{4}  
\| v \|_{C^{1,\alpha}(\overline{\mathfrak{C}^{g}(0',r,\widetilde{T})})}
           \end{split}
\end{equation*}

\noindent To conclude, we set $ \widetilde{w}= \widetilde{v \eta_t}_{\vert \overline{B'_r \times (-t,t)}}$ and we observe that, the positive constant $C(\eta_t)$ depends only on $t$ and $ \widetilde{T}$. \end{proof}

\bigskip

\subsection{Notations}\label{Notations} \quad \\

\noindent  $\mathbb{R}^N_+ =\{x=(x', x_N) \in \R^{N-1} \times \R  \ | \ x_N>0\}$, the open upper half-space of $\R^N$.  

\bigskip

\noindent $ \vert \cdot \vert$ : the Euclidean norm. 

\bigskip

\noindent $B(x,R)$  : the Euclidean $N$-dimensional open ball of center $x$ and radius $R>0$. 

\bigskip

\noindent $B'(x',R)$  : the Euclidean $N-1$-dimensional open ball of center $x'$ and radius $R>0$. 

\bigskip

\noindent $B_R : = B(0 , R)$  and $B'_R := B'(0', R)$, where $0 =(0',0) \in \R^{N-1} \times \R$ is the origin of $ \R^N$.  

\bigskip

\noindent $\mathcal{H}^{N-1}$ : the $N-1$-dimensional Hausdorff measure. 

\bigskip

\noindent $UC(X)$ : the set of uniformly continuous functions on $X$. 

\bigskip

\noindent $Lip(X)$ : the set of globally Lipschitz-continuous functions on $X$. 

\bigskip

\noindent $Lip_{loc}(X)$ : the set of locally Lipschitz-continuous functions on $X$. 

\bigskip

\noindent $C^k(\overline{U})$ : the set of functions in $C^k(U)$ all of whose derivatives of order $ \leq k$ have continuous (not necessarily bounded) extensions to the closure of the open set $U$. 

\bigskip

\noindent $C^{0,\alpha}(\overline{U})$ : the vector space of bounded and globally $\alpha$-Hölder-continuous functions $h$ on the open set $U$ endowed with the norm :

\medskip

\qquad $\Vert h \Vert_{C^{0,\alpha}(\overline{U})} := \|h\|_{L^{\infty}(U)} + 
\left[ h \right]_{C^{0,\alpha}  (U)} :=  \sup_{x \in U} |h(x)| + \sup_{x,y \in U, x\neq y}
\frac{|h(x)-h(y)|}{|x-y|^{\alpha}} $ . 

\bigskip

\noindent $C^{k,\alpha}(\overline{U})$ : the vector space of functions in $C^k(U)$ all of whose derivatives of order $ \leq k$ belong to $C^{0,\alpha}(\overline{U})$, endowed with the norm :

\medskip

\hskip4truecm  $\Vert h \Vert_{C^{k,\alpha}(\overline{U})} := \sum_{0 \leq \vert \beta \vert \leq k} \Vert \partial^{\beta} h 
\Vert_{C^{0,\alpha}(\overline{U})}$ . 

\bigskip

\noindent $\mathcal{D}'(U)$ : the space of distributions on the open set $U$. 

\bigskip

\noindent $H^1_{loc}(\overline{U}) = \{ u : U \mapsto \R,  \,\,  \text{$u$ Lebesgue-mesurable} \, : \, u \in H^1(U \cap B(0,R)) \quad \forall \, R>0 \}$, \\
 i.e., $u$ is Lebesgue-measurable on the open set $U$ and $u \in H^1(V)$ for any open bounded set $ V\subset U$.
 
 \bigskip

\medskip

\section*{Acknowledgments} 
\noindent The authors would like to thank Berardino Sciunzi for many stimulating and valuable discussions on the topics covered in this paper. 

\bibliographystyle{sorting=nyt}

\end{document}